\def\cleardoublepage{\clearpage\if@twoside \ifodd\c@page\else
  \hbox{}
  \thispagestyle{empty}
  \newpage
  \if@twocolumn\hbox{}\newpage\fi\fi\fi}
\theoremstyle{plain} 
\newtheorem{mainresult}{Theorem}
\newtheorem{teo}[subsection]{Theorem}
\newtheorem{corollary}[subsection]{Corollary}
\newtheorem{proposition}[subsection]{Proposition}
\newtheorem{lemma}[subsection]{Lemma}
\theoremstyle{remark}
\newtheorem*{remark}{Remark}
\newtheorem*{notation}{Notation}
\theoremstyle{definition}
\newtheorem{map}[subsection]{Map}
\newtheorem{definition}[subsection]{Definition}
\newtheorem*{example}{\tt{Example}}
\newcommand{\circulito}{\underset{i}{\circ}}
\newcommand{\tensork}{\underset{{\tiny\hbox{$K$}}}{\otimes}}
\newcommand{\W}{{\bf{W} }}
\title{The Lie structure on the Hochschild cohomology of a modular group algebra.}
\author{Selene S{\'a}nchez-Flores}
\address{Universität Stuttgart, Institut für Algebra und Zahlentheorie, Pfaffenwaldring 57, D-70569, Stuttgart, Germany}
\email{sanchez@mathematik.uni-stuttgart.de} 
\date{March 2011}
\keywords{Hochschild cohomology, group cohomology, Gerstenhaber bracket, group algebra, graded Lie algebra, Witt-type algebra.}
\subjclass[2000]{16E40,17B70}
\begin{document}
\maketitle
\let\thefootnote\relax\footnotetext{Research supported by Conacyt and in part by Conicet project MathAmsud Nocomalret.}
\begin{abstract}
We prove that the Gerstenhaber bracket 
on the Hochschild cohomology of the group algebra of a cyclic group 
over a field of positive characteristic is not trivial. In this case,  
we relate the Lie algebra structure on the odd degrees of the Hochschild cohomology with a Witt-type algebra. 
\end{abstract}

\section*{Introduction.}
The Hochschild cohomology of an associative unital algebra is endowed with 
a Gerstenhaber algebra structure. For finite dimensional algebras, 
the graded commutative structure of the Hochschild cohomology has been widely studied. 
In this paper, we are interested in the graded Lie algebra structure of the Hochschild cohomology 
given by the Gerstenhaber bracket. 
We will consider the group algebra of the cyclic group over a field of positive characteristic. 
In \cite{sanchez}, we proved that the Hochschild cohomology groups are zero in degrees $\geq 2$ 
if the first Hochschild cohomology group is semisimple, when the algebra is monomial over a field of characteristic zero. 
However, the above statement is no longer true if the characteristic is positive. 
The main objective for computing the Hochschild cohomology groups for monomial algebras whose first Hochschild cohomology group is semisimple as Lie algebra, 
was to study the Lie module structure of the Hochschild cohomology groups of higher degrees. Since we obtained that they are trivial in that case we consider now the positive characteristic case. For instance, if one takes the group algebra of the cyclic group of order equal to the characteristic of the field then the Hochschild cohomology groups are non zero and the first Hochschild cohomology group is isomorphic to, as Lie algebra, the Witt algebra, which is simple in case the characteristic of the field is not 2. The next step is to examine the Gerstenhaber bracket and in particular, we would like to determine whether it is trivial or not. We would like to point out that very few non trivial examples are known for the computation of the bracket and therefore for the understanding of the graded Lie algebra structure on the Hochschild cohomology. The principal complication for providing examples is that this bracket is defined using the Hochschild complex while smaller complexes are used to compute Hochschild cohomology. 
In this article, we prove that the Gerstenhaber bracket is not trivial in the case of the group algebra of cyclic group over a field of positive characteristic. 
Moreover, we give a formula for the bracket that allows us to determine the Lie module structure on the Hochschild cohomology groups. 
We are also able to relate the Lie algebra structure on the odd Hochschild cohomology with a a Witt type algebra.

Let $p$ be an odd prime, let $K$ be a field of characteristic $p$.  
Let $G=<g>$ be the cyclic group of order $|G|$, where $p$ divides $|G|$. 
Set $A:=KG$ to be the group algebra of $G$. 
Denote by $HH^*(A)$ the {\it{Hochschild cohomology of $A$}}, i.e. 
\[
HH^*(A):=Ext^*_{A^e}(A,A)
\]
where $A^e=A\otimes A^{op}$ is the {\it{enveloping algebra}} of $A$. 
The Lie algebra of derivations of $A$, which is isomorphic to $HH^1(A)$ since there are no inner derivations, 
is called the {\it{Witt algebra}} 
(see \cite{jacobson}, Chapter V, Ex. 21).  
It is generated by the following derivations. 
For $i=0,\dots,p-1$,  
let $D_{i}:A \rightarrow A$ 
be the derivation given by 
$D_i(g)=g^{i+1}$.
The commutator bracket in terms of the generators is then, 
\[
[\, D_i \, , \, D_j \,]=(j-i)D_{i+j}.
\]
In addition, the graded algebra structure on 
$HH^*(A)$ given by the cup product 
has been also described in \cite{holm} and 
in a more general situation in \cite{cibilssolotar} and \cite{suarez}. 
In \cite{cibilssolotar}, it is shown that  
\[
HH^*(A) \cong H^*(G,K) \otimes A. 
\]
as a graded algebra, 
where $H^*(G,K)$ is the group cohomology algebra with coefficients in the trivial $A$-module $K$. 
In fact, explicit isomorphisms are given in \cite{cibilssolotar} at the cochain level 
in a more general case: for any abelian finite group and any field. 
These isomorphisms enable us to transfer the Gerstenhaber bracket from the Hochschild cohomology 
to  $H^*(G,K) \otimes A$. 

The purpose of this paper is to study the Lie algebra structure 
on the Hochschild cohomology of $A$ via $H^*(G,K) \otimes A$. 
In this setting, 
we are able to relate this structure to a Witt-type graded Lie algebra as we explain next. 
It is well known that $H^n(G,K) \cong K$ for $n \geq 0$ 
since the characteristic of the field $K$ divides the order of the cyclic group $G$. 
This computation can be obtained by using the minimal resolution of $K$ as a trivial $A$-module.   
We define in this paper explicit isomorphisms 
$$s_n:H^n(G,K) \rightarrow K \quad \text{ and } \quad q_n:K \rightarrow H^n(G,K)$$
that allow us to give an explicit basis in $H^*(G,K)$ 
and to show that the Gerstenhaber bracket in $H^*(G,K) \otimes A$, and hence in $HH^*(A)$, is not trivial. 
Let us introduce some notation in order to state the main results. 

Set the following total order relation on the set $G$ as follows:
\[
1 < g < g^2 < \cdots < g^{|G|-1}
\]
For any element $x$ in $G$ define 
$$
Q(x):=\{ \,  y \in G \text{ such that } x\, y \,< \,y \, \}.
$$
Given $n\geq2$ and $\underline{x}=[\,x_1 \, | \, \cdots \, | \, x_i \, | \, x_{i+1} \, | \,\cdots\,| \, x_n \, ]$ in $G^{\times n}$, 
we will say that $\underline{x}$ {\it{ satisfies the condition}} $C(n)$ if and only if :

\begin{itemize}
\item $x_i \in Q(x_{i+1}) \; \forall \; i$  \underline{odd}, $i \geq 1$ when {\it{$n$ is even}} or  
\item $x_i \in Q(x_{i+1}) \; \forall \; i$  \underline{even} $i \geq 1$ when {\it{$n$ is odd}}. 
\end{itemize}

In the next result we give an explicit basis  of $H^*(G,K)$. 
\begin{mainresult}
Let $p$ be an odd prime. 
Denote $A=KG$ where the characteristic of $K$ is $p$ 
and $G$ is the cyclic group of order $|G|$ generated by $g$.  
Assume that $p$ divides $|G|$. 
Set $\beta^0=1_K$. 
Let $\beta^1$ be the derivation in $H^1(G,K)$ given by $\beta^1(g^i)=i$. 
For $n > 1$, let $\beta^n$ be the element in $H^n(G,K)$ represented by the map 
$\beta^n: G^{\times n} \rightarrow K$ defined as follows 
$$
\beta^{n} (\underline{x})= \begin{cases} 1 & \text{if $n$ is even and \underline{x} satisfies $C(n)$} \\
\beta^1(x_1) &  \text{if $n$ is odd and \underline{x} satisfies $C(n)$}\\
0 & \text{otherwise} \end{cases}
$$
where $C(n)$ is the condition given above. 
Then the set $\{\beta^n\}_{n \geq 0}$ is a basis of the $k$-vector space $H^*(G,K)$. 
\end{mainresult}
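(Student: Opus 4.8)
The plan is to reduce the statement to the one-dimensionality of each cohomology group together with the behaviour of the cochains $\beta^n$ under the cup product on the standard (bar) complex. Recall from the excerpt that $H^n(G,K)\cong K$ for every $n\geq0$; consequently $\{\beta^n\}_{n\geq0}$ is a basis of $H^*(G,K)$ as soon as each $\beta^n$ is a nonzero class in $H^n(G,K)$, i.e.\ a cocycle that is not a coboundary. I will work in the normalized standard complex $C^n=\{f\colon G^{\times n}\to K\}$ with the usual differential
\[
(\delta f)(x_1,\dots,x_{n+1})=f(x_2,\dots,x_{n+1})+\sum_{i=1}^{n}(-1)^i f(x_1,\dots,x_ix_{i+1},\dots,x_{n+1})+(-1)^{n+1}f(x_1,\dots,x_n),
\]
the coefficients $K$ carrying the trivial action. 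Writing $x_i=g^{a_i}$ with $0\leq a_i\leq|G|-1$, the first step is the elementary observation that $x_i\in Q(x_{i+1})$ holds precisely when $a_i+a_{i+1}\geq|G|$; that is, the membership conditions defining $C(n)$ record exactly the \emph{carries} occurring in the addition of exponents modulo $|G|$.

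Next I would dispose of the low degrees, which furnish the two multiplicative generators. The class $\beta^0=1_K$ is the unit and generates $H^0(G,K)=K$. For $\beta^1$ a direct computation gives $(\delta\beta^1)(g^a,g^b)=b-((a+b)\bmod|G|)+a$, which equals $|G|$ whenever $a+b\geq|G|$ and $0$ otherwise, hence vanishes in $K$ because $p$ divides $|G|$; since $\beta^1(g)=1\neq0$ and $H^1(G,K)=\mathrm{Hom}(G,K)\cong K$, the class $\beta^1$ is nonzero. For $\beta^2$, the translation above shows that $\beta^2$ is exactly the carry cochain sending $(g^a,g^b)$ to $1$ if $a+b\geq|G|$ and to $0$ otherwise, and $\delta\beta^2=0$ is the classical cocycle identity satisfied by carrying (equivalently $|G|\,\beta^2=\delta\beta^1$ over the integers, so $\beta^2$ is the Bockstein of $\beta^1$).

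To see that $\beta^2$ is not a coboundary I would pair it against the generator of $H_2$ coming from the minimal periodic resolution $\cdots\to KG\to KG\to KG\to K\to0$ of the trivial module, whose differentials alternate between multiplication by $g-1$ and by the norm element $N=\sum_{k=0}^{|G|-1}g^k$. A comparison chain map $\psi$ from this resolution into the bar resolution may be chosen with $\psi_2(1)=\sum_{k=0}^{|G|-1}[\,g^k\,|\,g\,]$, and then $\beta^2(\psi_2(1))=\sum_{k}\beta^2(g^k,g)=\beta^2(g^{|G|-1},g)=1$, since $Q(g)=\{g^{|G|-1}\}$. Because applying $\mathrm{Hom}_{KG}(-,K)$ to the periodic resolution makes every differential zero (the maps $g-1$ become $0$ and $N$ becomes multiplication by $|G|=0$), this single nonzero value certifies that $\beta^2$ generates $H^2(G,K)$.

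The final and main step is to recognize the higher $\beta^n$ as cup products of the two generators. With the trivial action on $K$, the cup product on the standard complex is given strictly at the cochain level by $(\alpha\cup\gamma)(x_1,\dots,x_{p+q})=\alpha(x_1,\dots,x_p)\,\gamma(x_{p+1},\dots,x_{p+q})$. Evaluating $(\beta^2)^{\cup k}$ on $(x_1,\dots,x_{2k})$ gives the product of the indicators of $x_{2j-1}\in Q(x_{2j})$ for $j=1,\dots,k$, which is $1$ exactly when $x_i\in Q(x_{i+1})$ for all odd $i$, i.e.\ exactly on the tuples satisfying $C(2k)$; hence $(\beta^2)^{\cup k}=\beta^{2k}$ as cochains. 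Likewise $\beta^1\cup(\beta^2)^{\cup k}$ takes on $(x_1,\dots,x_{2k+1})$ the value $\beta^1(x_1)$ times the product of the indicators of $x_{2j}\in Q(x_{2j+1})$, which is $\beta^1(x_1)$ exactly on the tuples satisfying $C(2k+1)$, so $\beta^1\cup(\beta^2)^{\cup k}=\beta^{2k+1}$. In particular every $\beta^n$ is a cup product of cocycles, hence a cocycle; and in cohomology, invoking the classical ring isomorphism $H^*(G,K)\cong\Lambda_K(\beta^1)\otimes_K K[\beta^2]$, the monomials $(\beta^2)^k$ and $\beta^1(\beta^2)^k$ are all nonzero (alternatively, the nonvanishing of each $\beta^n$ can be checked directly, as for $\beta^2$, by pairing with the image under $\psi$ of the degree-$n$ generator of the periodic resolution). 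Since $\dim_K H^n(G,K)=1$, the nonzero classes $\beta^n$ then form a basis, as claimed. I expect the main obstacle to be the on-the-nose matching between the combinatorial condition $C(n)$ and the Alexander--Whitney cup product, together with the verification that carrying is a cocycle; once the translation $x_i\in Q(x_{i+1})\Leftrightarrow a_i+a_{i+1}\geq|G|$ is in place, the remaining identities are bookkeeping.
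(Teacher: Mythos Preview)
Your argument is correct, but it proceeds along a genuinely different route from the paper's.

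The paper proves the theorem by constructing an explicit $A$-linear comparison map $q\colon \bold{B}\to\bold{M}$ from the bar resolution to the minimal periodic resolution, with $q_n$ defined so that $\beta^n$ is literally the cochain $q_n(1)$. The bulk of the work (Section~3) is a direct case-by-case verification, using combinatorial lemmas about the sets $Q(x)$ and the elements $[x]_g$, that $q$ commutes with the differentials. Once $q$ is known to be a chain map it induces, after applying $\mathrm{Hom}_A(-,K)$, an isomorphism $K\to H^n(G,K)$ for each $n$, so $\beta^n=q_n(1)$ is automatically a nonzero class; no cup products are invoked at this stage.

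Your approach short-circuits this by recognising the cochain identities $(\beta^2)^{\cup k}=\beta^{2k}$ and $\beta^1\cup(\beta^2)^{\cup k}=\beta^{2k+1}$ on the nose via the Alexander--Whitney formula, which indeed matches the parity pattern in condition $C(n)$ exactly; the cocycle property and nonvanishing then reduce to degrees $1$ and $2$, where you check them directly and appeal to the known ring structure of $H^*(G,K)$ (or, alternatively, a single pairing with a comparison map in low degree). This is shorter and conceptually cleaner for the statement at hand. What you lose is the explicit pair of comparison maps $q$ and $s$ in all degrees: the paper needs the inverse map $s_n\colon H^n(G,K)\to K$ (Section~4) as an essential computational tool for evaluating the Gerstenhaber bracket in Section~5, so its heavier approach is an investment that pays off later.
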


Moreover, we explicit the Gerstenhaber bracket in terms of this basis. 

\begin{mainresult}
Let $p$ be an odd prime. 
Denote $A=KG$ where the characteristic of $K$ is $p$ 
and $G$ is the cyclic group of order $|G|$.  
Assume that $p$ divides $|G|$. 
Let
$$\varphi: H^*(G,K) \times G \rightarrow K$$ 
be the map which is linear in the first variable and additive in the second variable such that 
$$\varphi(\beta^n,x)=\begin{cases} \beta^1(x) & \text{ if $n$ is odd} \\ 0 & \text{ otherwise} \end{cases}$$ 
where $\beta^n$ is an element of the basis of $H^*(G,K)$ described above and $x$ is in $G$. 
Then the Gerstenhaber bracket on $H^*(G,K) \otimes A$ is given by 
\[
[\, \beta^n \otimes x \,, \, \beta^m \otimes y ]=  (\varphi(\beta^n,y) - \varphi(\beta^m,x)) \beta^{n+m-1} \otimes xy\] 
where $x$ and $y$ are in $G$.
\end{mainresult}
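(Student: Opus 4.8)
The plan is to pull the two classes back to explicit Hochschild cochains, compute the Gerstenhaber bracket there, and then read off the resulting cohomology class using the one-dimensionality of each $H^n(G,K)$. Under the Cibils--Solotar cochain-level isomorphism $H^*(G,K)\otimes A \cong HH^*(A)$, the class $\beta^n\otimes x$ is represented by the Hochschild $n$-cochain
\[
F_{\beta^n,x}\colon A^{\otimes n}\to A,\qquad F_{\beta^n,x}(g_1,\dots,g_n)=\beta^n(g_1,\dots,g_n)\,x\,g_1\cdots g_n,
\]
evaluated on the basis $G^{\times n}$ and extended $K$-linearly. I would first check this against the stated data in low degree: for $n=1$ one gets $g\mapsto \beta^1(g)\,x\,g$, and with $x=g^i$ and $\beta^1(g)=1$ this is exactly the derivation $D_i(g)=g^{i+1}$, so the representative is the correct one.

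Next I would insert one cochain into the other. Writing $\circ$ for the Gerstenhaber composition,
\[
(F_{\beta^n,x}\circ F_{\beta^m,y})(a_1,\dots,a_{n+m-1})
=\sum_{i=1}^{n}(-1)^{(m-1)(i-1)}F_{\beta^n,x}\bigl(a_1,\dots,F_{\beta^m,y}(a_i,\dots,a_{i+m-1}),\dots\bigr).
\]
The crucial simplification is that $A=KG$ is commutative because $G$ is abelian: the value $F_{\beta^m,y}(a_i,\dots,a_{i+m-1})=\beta^m(\cdots)\,y\,a_i\cdots a_{i+m-1}$ is a scalar times a single group element, and when it is fed into $F_{\beta^n,x}$ the product of all group arguments rearranges to $x\,y\,a_1\cdots a_{n+m-1}$ regardless of $i$. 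Hence every term carries the common factor $xy\,a_1\cdots a_{n+m-1}$, the $A$-part of the bracket is forced to be $xy$, and the whole computation collapses to a purely scalar computation on $G^{\times(n+m-1)}$. Since $H^{n+m-1}(G,K)\cong K$ with basis $\beta^{n+m-1}$, the resulting class is automatically a scalar multiple $\lambda\,\beta^{n+m-1}\otimes xy$, and it remains only to compute $\lambda=s_{n+m-1}$ of the bracket cocycle.

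To extract $\lambda$ I would track where a group element enters $\beta^n$ \emph{linearly}. The only linear dependence is through the factor $\beta^1(x_1)$ present when $n$ is odd, and since $\beta^1$ is the homomorphism $g^i\mapsto i$ it is additive: $\beta^1(y\,w)=\beta^1(y)+\beta^1(w)$. Inserting $y\,a_i\cdots a_{i+m-1}$ into the first slot ($i=1$) therefore splits $\beta^n$ into its untwisted value plus a correction $\beta^1(y)$, and this correction is present precisely when $n$ is odd, i.e. it equals $\varphi(\beta^n,y)$. The symmetric term $F_{\beta^m,y}\circ F_{\beta^n,x}$ produces in the same way the correction $\beta^1(x)=\varphi(\beta^m,x)$ when $m$ is odd, and the sign $-(-1)^{(n-1)(m-1)}$ in the definition of the bracket turns it into $-\varphi(\beta^m,x)$. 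Combining the two yields the claimed coefficient $\varphi(\beta^n,y)-\varphi(\beta^m,x)$.

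The main obstacle is the combinatorial cancellation of the \emph{untwisted} contributions, that is, the terms in which $\beta^n$ and $\beta^m$ are evaluated on the genuine products $a_i\cdots a_{i+m-1}$ with no shift by $x$ or $y$, together with the requirement that the combinatorial indicator attached to the surviving $\beta^1(y)$ (resp.\ $\beta^1(x)$) correction reassemble exactly into $\beta^{n+m-1}$. Concretely one must show that the untwisted terms, governed by the conditions $C(n)$, $C(m)$ and $C(n+m-1)$ through the sets $Q(-)$, either cancel between $F_{\beta^n,x}\circ F_{\beta^m,y}$ and $\pm F_{\beta^m,y}\circ F_{\beta^n,x}$ or assemble into a Hochschild coboundary, so that they contribute nothing to $s_{n+m-1}$ of the bracket. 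Establishing this compatibility between the total-order / $Q$-combinatorics defining $\beta^\bullet$ and the Gerstenhaber insertions is where the real work lies; the additivity of $\beta^1$ and the one-dimensionality of the cohomology then finish the computation.
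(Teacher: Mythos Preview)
Your overall setup matches the paper: pass through the Cibils--Solotar isomorphism, observe that the $A$-component of the bracket is forced to be $xy$, and use the one-dimensionality of $H^{n+m-1}(G,K)$ to reduce everything to computing a single scalar via the map $s_{n+m-1}$. Up to that point you and the paper are doing the same thing.

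The divergence is in how the scalar is extracted. The paper does \emph{not} attempt to isolate a ``$\beta^1(y)$-linear correction'' and argue that the rest cancels. Instead it evaluates $s_{n+m-1}(\gamma^{(\beta^m,y)}_{\beta^n})$ directly by plugging in the explicit test sequences $[\,g\,|\,g^{i_1}\,|\,g\,|\,g^{i_2}\,|\cdots\,]$ that define $s$, computes $s(\gamma_j)$ for every insertion position $j$, and does this in four separate lemmas according to the parities of $n$ and $m$. In each case the sums over the $i_r$ collapse (most indices are forced to $|G|-1$ by the $C(\cdot)$-conditions), each $s(\gamma_j)$ comes out as a simple expression in $\beta(y)$ or $\beta(gy)$, and the alternating sum over $j$ then yields the tabulated values. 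The final formula is read off from the resulting $2\times 2$ parity table.

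Your additivity heuristic is not a substitute for this. The point you flag as ``the main obstacle'' is a genuine gap, and in fact the situation is worse than you indicate: the $y$-dependence does \emph{not} enter only through $\beta^1$ at the first slot. For every insertion position $i\ge 2$ the element $y\,a_i\cdots a_{i+m-1}$ sits in a slot that is tested by the $C(n)$-condition (either directly or via $Q$ of its neighbour), so the indicator itself depends on $y$. Thus there is no clean decomposition into a ``twisted'' piece carrying $\beta^1(y)$ and an ``untwisted'' piece independent of $y$; the $y$-dependence is spread across all $\gamma_j$ through the $Q$-combinatorics. The paper sidesteps this entirely by computing each $s(\gamma_j)$ in closed form rather than trying to separate contributions. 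If you want to complete your argument along the lines you propose, you would effectively have to reproduce those four parity computations anyway.
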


This paper is organized as follows. 
In the first section, 
we transfer the Gerstenhaber bracket 
from $HH^{*+1}(A)$ to 
$HH^{*+1}(G,K) \otimes A$ where $A$ 
is the group algebra of any finite abelian group, 
using the explicit isomorphism given in \cite{cibilssolotar}. 
Then in section 2, 
we define the maps between 
the bar resolution and the minimal projective resolution 
of the trivial $A$-module $K$ for $G$ cyclic and we give explicit isomorphisms  
between $H^n(G,K)$ and $K$. 
In sections 3 and 4, 
we prove that the maps introduced in section 2 
are comparison maps between the bar resolution and the minimal projective resolution. 
In section 5, we compute the Gerstenhaber bracket 
and study the Lie structure on $HH^{*+1}(A)$ where 
$A=KG$ with $G$ the cyclic group of order $|G|$ and $p$ divides $|G|$.

\thanks{{\bf{Acknowledgment.}} I would like to thank Andrea Solotar for her suggestions and improvements to this article. 
\section{Gerstenhaber bracket.} 
Given a field $K$ and an associative $K$-algebra $A$,  
the Gerstenhaber bracket, 
\[
[\, - \, , \, - \,]_G: HH^n(A) \times HH^m(A) \longrightarrow HH^{m+n-1}(A) \, ,
\]
is defined 
on the Hochschild cohomology groups using the Hochschild complex. 
We begin this section by recalling the definition of this bracket. 
Let   
\[
C^n(A):= Hom_{K}(A^{\otimes^n_K},A),  
\]
for $n\geq 0$ be the space of {\it{Hochschild $n$-cochains}}. 
In \cite{gersten}, Gerstenhaber defined a right 
pre-Lie system $\{ C^n(A), \{ \circ_i \}_{i=1}^n\}$ 
where elements of $C^n(A)$ are declared to have degree $n-1$. 
The $K$-bilinear map
\[
\circ_i: C^n(A) \times C^m(A) \longrightarrow C^{n+m-1}(A)
\]
is given by the following formula:
\[
\scalebox{0.95}{$
f^n \circ_i g^m (x_1 \otimes \cdots \otimes x_{n+m-1}):=
f^n(x_1 \otimes \cdots \otimes g^m(x_i \otimes \cdots \otimes x_{i+m-1}) 
\otimes \cdots \otimes x_{n+m-1})$}
\]
where $m\geq 1$, $f^n$ is in $C^n(A)$ and $g^m$ is in $C^m(A)$. 
Suppose now that $m=0$, so let $f^n$ be in $C^n(A)$ and $a$ in $A$. 
For $n \geq1$ and $i=1,\dots,n$ define 
\[
\scalebox{1}{$
f^n \circ_i a \,(x_1 \otimes \cdots \otimes x_{n-1}):=
f^n(x_1 \otimes \cdots \otimes \underbrace{a}_{i-th} 
\otimes \cdots \otimes x_{n-1})$}.
\]
Gerstenhaber proved that this pre-Lie system 
induces a graded pre-Lie algebra structure on $C^{*+1}(A)$
by defining an operation $\circ$ as follows:
\[
f^n \circ g^m:= \sum_{i=1}^n (-1)^{(i-1)(m-1)} f^n \circ_i g^m.
\]
In case $n=0$, set $a \circ g^m:=0$ for all $a$ in $A$. 
Finally, $C^{*+1}(A)$ becomes a 
graded Lie algebra by defining the bracket 
as the graded commutator of $\circ$. So,  
\[
[f^n \, , \, g^m]_G:=f^n \circ g^m - (-1)^{(n-1)(m-1)} g^m \circ f^n.
\]  
Moreover, Gerstenhaber proved that if $\delta$ 
is the differential in the Hochschild complex, 
\[
\delta[f^m \, , \, g^n]_G= 
[f^m \, , \, \delta g^n]_G + (-1)^{n-1}[\delta f^m \, , \, g^n]_G.
\]
The above formula is obtained from the graded Jacobi identity and the fact that 
$$\delta f^n=(-1)^{n-1}[\, \mu \, , \, f^n \,]_G$$ where $\mu$ is the multiplication of $A$. 
Then 
the $K$-bilinear map
\[
[\, - \, , \, - \,]_G: HH^m(A) \times HH^n(A) \longrightarrow HH^{n+m-1}(A) 
\]
is well defined.
Therefore, 
$HH^{*+1}(A)$ endowed with the induced Gerstenhaber 
bracket is also a graded Lie algebra.

\subsection*{Gerstenhaber bracket on $H^{*+1}(G,K) \otimes A$.}
From now on, let $G$ be any abelian finite group and $A=KG$. 
In \cite{cibilssolotar}, 
explicit morphisms are given for each $n$ from $C^n(A)$ 
to $Map(G^{\times n},K) \otimes A$,  
in order to show that the graded algebra 
$HH^*(A)$ with the cup product is isomorphic 
to the tensor product algebra of the group cohomology algebra $H^*(G,K)$ and $A$. 
In this paragraph we will transfer the Gerstenhaber bracket from 
$HH^{*+1}(A)$ to $H^{*+1}(G,K) \otimes A$. 
To do so, we will use the explicit isomorphisms given in \cite{cibilssolotar}.

First, recall that the group cohomology $H^*(G,K)$ can be computed from the usual cochain complex 
$$
\bold{C}:=0 \rightarrow K \stackrel{d_0}{\longrightarrow} Map(G,K) \stackrel{d_1}{\longrightarrow} \cdots 
Map(G^{\times n},K) \stackrel{d_n }{\longrightarrow} Map(G^{\times n+1},K)
\cdots
$$
where $d_0=0$ and for $n \geq 1$
$$
\begin{array}{lcl}
d_n(f)(x_1 | \cdots | x_{n+1}) &=& f(x_2 | \cdots | x_{n+1}) \\
\, &+&
\displaystyle{\sum_{i=1}^n (-1)^i f(x_1 | \cdots | x_i x_{i+1} | \cdots | x_{n+1} )}  \\
\, &+& (-1)^{n+1} f(x_1 | \cdots | x_n).
\end{array}
$$
Denote $\bold{C'}:=\bold{C} \tensork A$ and $d'=d \tensork id$. 
In \cite{cibilssolotar}, the authors defined a cochain complex map 
from the Hochschild complex to ${\bold{C'}}$. In order to define it, let us introduce some notation.

\begin{notation}
Since $G$ provides a basis to the $K$-vector space $A$, given $a\in A$, 
there exists a unique $\lambda_x(a) \in K$ for all $x \in G$ such that 
$$a=\sum_{x \in G} \lambda_x(a) \, x.$$
Given $x \in G$ and $f \in Hom_K(A^{\otimes n},A)$, 
we define $\varepsilon^n(f,x)$ to be the map 
$$\varepsilon^n(f,x):G^{\times n} \rightarrow K$$ 
such that 
$$\varepsilon^n(f,x)(\, x_1\, | \, \cdots \, | \, x_{n} \, )= 
\lambda_{x_1\cdots x_nx} (f(x_1 \otimes \cdots \otimes x_n)).$$
\end{notation}

\begin{map}\label{phi} Set $\phi$ to be the family of maps $(\phi_n)$ where each 
\[
\begin{array}{rclc}
\phi_n: &Hom_K(A^{\otimes n},A) &\rightarrow &Map (G^{\times n},K) \otimes A \\
\end{array}
\]
is given by 
\[
\phi_n(f)= \displaystyle {\sum_{x \in G} }\; \varepsilon^n(f,x) \otimes x \,.
\]
Notice that $\phi_0=id_{A}$.
\end{map}
We will give an inverse map of $\phi$. To do so we need the following notation. 

\begin{notation}
For any map $\alpha$ in $Map(G^{\times n},K)$ and any element $x$ in $G$, 
we define $f^n_{(\alpha,x)}$ to be the linear map 
$$f^n_{(\alpha,x)}: A^{\otimes n} \rightarrow A$$ 
such that 
$$f^n_{(\alpha,x)}(x_1 \otimes \cdots \otimes x_n)=
\alpha(\, x_1\, | \, \cdots \, | \, x_{n} \,)\, x_1\cdots x_{n}x $$
where $x_i \in G$. 
\end{notation}

\begin{map}\label{psi}
We denote by $\psi$ the inverse map of $\phi$ given by the family of maps $(\psi_n)$ where each 
\[
\begin{array}{rclc}
\psi_n: &Map (G^{\times n},K) \otimes A &\rightarrow &Hom_K(A^{\otimes n},A) 
\end{array}
\]
is given by 
\[
\psi_n(\alpha \otimes x)=f^n_{(\alpha,x)}
\]
\end{map}

\begin{remark}
Let $f \in Hom_K(A^{\otimes n}, A)$. A straightforward computation shows that 
$$
\displaystyle {\sum_{x \in G} }\; f^n_{(\varepsilon^n(f,x),  x)} = f.
$$
Moreover, let $\alpha \otimes x$ be in $Map(G^{\times n},K) \otimes A$ then 
$$
\varepsilon^n(f^n_{(\alpha \otimes x)}, y)= 
\begin{cases}
\alpha & \text{ if } x=y \\
0 & \text{ otherwise. }
\end{cases}
$$
Therefore, the composition   
$\psi_n \phi_n$ is the identity map of $Map(G^{\times n},K) \otimes A$, while  
 $\phi_n \psi_n$ is the identity map of $C^n(A)$. 
This implies that $\psi$ is a cochain complex map 
and that the inverse of $\phi$ is $\psi$. 
\end{remark}

Now we can translate the Gerstenhaber bracket from the Hochschild cochain complex 
$C^{*+1}(A)$ to the complex $\bold{C'}$ using $\psi$ and $\phi$. 
To do so, it is enough to translate the operation $\circulito$. 

\begin{notation}
Given $n \geq 1$,  
let $\alpha \in Map(G^{\times n},K)$, $\beta \in Map(G^{\times m},K)$ 
and $y \in G$. 
For $i = 1,\dots, n$, denote $\gamma_i:=\gamma(\alpha,\beta,y)$ the map
$$\gamma_i:G^{n+m-1} \rightarrow K$$ such that 
$\gamma_i (x_1\, | \, \cdots \, | \, x_{n+m-1})=$
$$ 
\alpha(x_1\, | \, \cdots \, | \, x_i x_{i+1}\cdots x_{i+m-1}y \, | \, \cdots \, | \, x_{n+m-1})
\beta(x_i \, | \, \cdots \, | \, x_{i+m-1}).
$$
Denote
$$
\gamma^{(\beta,y)}_{\alpha}:= \sum_{i=1}^{n} (-1)^{(m-1)(i-1)} \, \gamma_i.
$$
\end{notation}

\begin{lemma} 
Let $\alpha \in Map(G^{\times n},K)$, $\beta \in Map(G^{\times m},K)$ and 
$x, y \in G$. Then 
$$
\phi_{n+m-1}(\, \psi_n(\alpha \otimes x) \, \circ \, \psi_{m}(\beta \otimes y) \,) =
\gamma^{(\beta,y)}_{\alpha}  \otimes xy. 
$$
\end{lemma}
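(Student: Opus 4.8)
The plan is to reduce the statement to the single-slot operations $\circ_i$, recognize each resulting cochain explicitly as $\psi_{n+m-1}(\gamma_i\otimes xy)$, and then finish by applying $\phi_{n+m-1}$ and invoking the identity $\phi_{n+m-1}\psi_{n+m-1}=\mathrm{id}$ from the Remark. So the whole argument comes down to a single direct computation of $\circ_i$ on pure tensors of group elements.

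First I would fix group elements $x_1,\ldots,x_{n+m-1}$ and evaluate $\psi_n(\alpha\otimes x)\circ_i\psi_m(\beta\otimes y)$ on $x_1\otimes\cdots\otimes x_{n+m-1}$. Writing $f=f^n_{(\alpha,x)}$ and $g=f^m_{(\beta,y)}$, the definition of $\psi_m$ gives $g(x_i\otimes\cdots\otimes x_{i+m-1})=\beta(x_i|\cdots|x_{i+m-1})\,(x_i\cdots x_{i+m-1}y)$, which is a scalar multiple of a \emph{single} group element. Plugging this into the $i$-th slot of $f$ and using that $f$ is $K$-linear, I can pull the scalar $\beta(x_i|\cdots|x_{i+m-1})$ outside and evaluate $f$ on the pure tensor whose $i$-th entry is the group element $x_i\cdots x_{i+m-1}y$.

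The key step is then applying the formula for $f=f^n_{(\alpha,x)}$ to this tensor. The coefficient produced is $\alpha(x_1|\cdots|x_i\cdots x_{i+m-1}y|\cdots|x_{n+m-1})$, which is exactly the first factor of $\gamma_i$, and the attached group element is the ordered product of all $n$ arguments times $x$. Here I would use that $G$ is abelian: that product simplifies to $x_1\cdots x_{n+m-1}\,y$, and multiplying by $x$ yields $x_1\cdots x_{n+m-1}\,(xy)$. Combining the two factors gives $\psi_n(\alpha\otimes x)\circ_i\psi_m(\beta\otimes y)=f^{n+m-1}_{(\gamma_i,\,xy)}=\psi_{n+m-1}(\gamma_i\otimes xy)$.

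To conclude, I would sum over $i$ with the Gerstenhaber signs $(-1)^{(i-1)(m-1)}$ that define $\circ$ and use the linearity of $\psi_{n+m-1}$ in its first argument to obtain $\psi_n(\alpha\otimes x)\circ\psi_m(\beta\otimes y)=\psi_{n+m-1}(\gamma^{(\beta,y)}_\alpha\otimes xy)$; applying $\phi_{n+m-1}$ and using $\phi_{n+m-1}\psi_{n+m-1}=\mathrm{id}$ then gives the claim. The main obstacle is purely bookkeeping: one must track how the collapsed group element $x_i\cdots x_{i+m-1}y$ occupies a single slot of $\alpha$, and how its contribution $y$ recombines with $x$ into $xy$ rather than $yx$. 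This recombination is precisely where commutativity of $G$ enters, and it is what pins down the second tensor factor as $xy$.
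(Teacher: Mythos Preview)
Your proposal is correct and follows essentially the same approach as the paper: both reduce to the single-slot operation $\circ_i$, evaluate $f^n_{(\alpha,x)}\circ_i f^m_{(\beta,y)}$ on a pure tensor of group elements, and use commutativity of $G$ to identify the resulting group element as $x_1\cdots x_{n+m-1}\,xy$. The only cosmetic difference is that the paper applies $\phi_{n+m-1}$ directly (computing $\varepsilon^{n+m-1}$ and seeing that only the summand $c=xy$ survives), whereas you first recognize the cochain as $\psi_{n+m-1}(\gamma_i\otimes xy)$ and then invoke $\phi_{n+m-1}\psi_{n+m-1}=\mathrm{id}$.
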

\begin{proof} Fix $n \geq 1$ and let $i=1,\dots, n$. 
It is enough to show that 
$$
\phi_{n+m-1}(\, \psi_n(\alpha \otimes x) \, \circ_i \, \psi_{m}(\beta \otimes y) \,) = 
\gamma_i  \otimes xy 
$$
Let us remark that the left hand side  is equal to 
$$
\displaystyle{\sum_{c \in G} \varepsilon^{n+m-1}(f^n_{(\alpha \otimes x)} \circ_i f^m_{(\beta \otimes y)}, c) \otimes c}. 
$$
Let $\underline{x}=[\, x_1 \, | \, \cdots \, | \, x_{n+m-1}\,] \in G^{\times n+m-1}$, then for each $c \in G$
\[
\begin{array}{clc}
\varepsilon^{n+m-1}(f^n_{(\alpha \otimes x)} \circ_i f^m_{(\beta \otimes y)}, c) (\overline{x})&=& 
\lambda_{x_1\cdots x_{n+m-1}c} \,( f^n_{(\alpha \otimes x)} \circ_i f^m_{(\beta \otimes y)} (\overline{x}) \, ).\\
\end{array}
\]
Since $f^n_{(\alpha \otimes x)} \circ_i f^m_{(\beta \otimes y)} (\overline{x})$ 
\[
\scalebox{.9}{
$
\begin{array}{cl}
=&
f^n_{(\alpha \otimes x)} ( x_1 \otimes \cdots \otimes 
f^m_{(\beta \otimes y)} (x_i \otimes \cdots \otimes x_{i+m-1}) \otimes \cdots \otimes x_{n+m-1} )\\
=&\beta[\, x_i \, | \, \cdots \, | \, x_{i+m-1}\,]
f^n_{(\alpha \otimes x)} ( x_1 \otimes \cdots \otimes x_i\cdots x_{i+m-1} y \otimes \cdots \otimes x_{n+m-1} ) \\
= &\beta[\, x_i \, | \, \cdots \, | \, x_{i+m-1}\,] 
\alpha[\, x_1 \,|\, \cdots \, | \, x_i\cdots x_{i+m-1} y \,| \, \cdots \, | x_{n+m-1} \,] \,
x_1\cdots x_{i+m-1} y x_{i+m} \cdots x_{n+m-1} x  \, 
\end{array}
$}
\]
then $\varepsilon^{n+m-1}(f^n_{(\alpha \otimes x)} \circ_i f^m_{(\beta \otimes y)}, c) (\overline{x})$
is equal to 
$$\beta[\, x_i \, | \, \cdots \, | \, x_{i+m-1}\,] 
\alpha[\, x_1 \,|\, \cdots \, | \, x_i\cdots x_{i+m-1} y \,| \, \cdots \, | x_{n+m-1} \,]$$
if 
when $c=xy$ (since $G$ is commutative) and zero otherwise. This proves what we wanted. 
\end{proof}

\begin{proposition}\label{bracket}
Consider the bilinear map
\[
[\, - \, , \, - \, ]: Map(G^{\times n},K) \otimes A \times Map(G^{\times m},K) \otimes A \longrightarrow 
Map(G^{\times n+m-1},K) \otimes A
\]
given by
$$
[\, \alpha \otimes x \,, \, \beta \otimes y ]= (\gamma^{(\beta,y)}_{\alpha} - (-1)^{(n-1)(m-1)} \gamma^{(\alpha,x)}_\beta ) \otimes xy 
$$
where $x,y \in G$, 
$\alpha \in Map(G^{\times n},K)$, $\beta \in Map(G^{\times m},K)$
and $\gamma^{(\beta,y)}_{\alpha}$ and $\gamma^{(\alpha,x)}_\beta$ are defined as above. 
Then the map $[\, - \, , \, - \, ]$ is well defined on ${H^{*+1}(G,K) \otimes A}$ 
and $(\,H^{*+1}(G,K) \otimes A, [\, - \, , \, - \, ] \,)$ is a graded Lie algebra. 

Moreover, $HH^{*+1}(A)$ with the Gerstenhaber bracket is isomorphic as a graded Lie algebra to 
 $(\,H^{*+1}(G,K) \otimes A, [\, - \, , \, - \, ] \,)$.  
\end{proposition}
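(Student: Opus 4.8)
The plan is to obtain both assertions by transport of structure along the mutually inverse cochain isomorphisms $\phi$ and $\psi$ of Maps \ref{phi} and \ref{psi}, thereby reducing everything to Gerstenhaber's theorem that $(C^{*+1}(A),[\,-\,,\,-\,]_G)$ is a graded Lie algebra. First I would \emph{define} a bracket on $\mathbf{C}'$ by
\[
[\,\omega\,,\,\eta\,]:=\phi_{n+m-1}\bigl([\,\psi_n(\omega)\,,\,\psi_m(\eta)\,]_G\bigr)
\]
for $\omega\in Map(G^{\times n},K)\otimes A$ and $\eta\in Map(G^{\times m},K)\otimes A$, and then check that on pure tensors $\omega=\alpha\otimes x$ and $\eta=\beta\otimes y$ it coincides with the formula in the statement. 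This check is immediate: writing $[\,-\,,\,-\,]_G$ as the graded commutator $f\circ g-(-1)^{(n-1)(m-1)}g\circ f$ with $f=\psi_n(\alpha\otimes x)$ and $g=\psi_m(\beta\otimes y)$ and applying the $K$-linear map $\phi_{n+m-1}$, the preceding Lemma gives $\phi_{n+m-1}(f\circ g)=\gamma^{(\beta,y)}_{\alpha}\otimes xy$ and, with the two factors interchanged, $\phi_{n+m-1}(g\circ f)=\gamma^{(\alpha,x)}_{\beta}\otimes yx$. Since $G$ is abelian we have $yx=xy$, and collecting the commutator sign yields exactly the expression for $[\,\alpha\otimes x\,,\,\beta\otimes y\,]$ in the statement. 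Because $\phi$ and $\psi$ are honest $K$-linear isomorphisms, this simultaneously shows that the given formula is well defined and bilinear and that it is precisely the transported bracket.

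Next I would record that $\phi=(\phi_n)$ is a degree-$0$ isomorphism of graded vector spaces once $C^n(A)$ and $Map(G^{\times n},K)\otimes A$ are both placed in degree $n-1$; this is the shifted grading in which $(-1)^{(n-1)(m-1)}$ is the Koszul sign. As the sign occurring in the definition of $[\,-\,,\,-\,]$ is literally the one appearing in $[\,-\,,\,-\,]_G$, the map $\phi$ carries $[\,-\,,\,-\,]_G$ to $[\,-\,,\,-\,]$ without any sign correction. Transport of structure along a degree-$0$ linear isomorphism preserves graded antisymmetry and the graded Jacobi identity, so $(\mathbf{C}',[\,-\,,\,-\,])$ is a graded Lie algebra and $\phi$ is an isomorphism of graded Lie algebras onto $(C^{*+1}(A),[\,-\,,\,-\,]_G)$.

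It then remains to pass to cohomology. Applying the linear map $\phi$ to Gerstenhaber's compatibility identity $\delta[\,-\,,\,-\,]_G=[\,-\,,\,\delta(-)\,]_G\pm[\,\delta(-)\,,\,-\,]_G$ recalled at the beginning of this section, and using that $\phi$ and $\psi$ commute with the differentials (that is, $d'\phi=\phi\delta$ and $\delta\psi=\psi d'$), I obtain the analogous graded Leibniz rule for $[\,-\,,\,-\,]$ with respect to $d'$, with the same sign. Hence $(\mathbf{C}',d',[\,-\,,\,-\,])$ is a differential graded Lie algebra and, in particular, $[\,-\,,\,-\,]$ descends to its cohomology. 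Since $K$ is a field, $A$ is free over $K$, so tensoring with $A$ is exact and $H^*(\mathbf{C}')=H^*(\mathbf{C}\tensork A)=H^*(\mathbf{C})\tensork A=H^*(G,K)\otimes A$. Therefore $[\,-\,,\,-\,]$ is well defined on $H^{*+1}(G,K)\otimes A$ and makes it a graded Lie algebra. Finally, being an isomorphism of differential graded Lie algebras, $\phi$ induces on cohomology an isomorphism of graded Lie algebras between $HH^{*+1}(A)$ and $H^{*+1}(G,K)\otimes A$, which is the remaining assertion.

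The only genuinely non-formal ingredient is the preceding Lemma, which is already proved; everything after it is transport of structure together with the differential graded Lie compatibility established by Gerstenhaber. Consequently the one point that demands care — and the natural place for a sign slip — is matching the shifted-degree and Koszul-sign conventions so that $\phi$ is a \emph{strict} morphism of graded Lie algebras, alongside the harmless appeal to commutativity of $G$ to replace $yx$ by $xy$ in the second summand.
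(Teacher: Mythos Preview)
Your proof is correct and follows the same route as the paper, which simply records the identity $[\, \alpha \otimes x \, , \, \beta \otimes y \, ]=\phi_{n+m-1}[\, \psi_n(\alpha \otimes x) \, , \, \psi_{m}(\beta \otimes y) \,]_G$ and declares everything else a direct consequence. You have carefully unpacked the transport-of-structure and descent-to-cohomology details that the paper leaves implicit.
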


\begin{proof}
It is a direct consequence of the following equality  
$$[\, \alpha \otimes x \, , \, \beta \otimes y \, ]= 
\phi_{n+m-1}[\, \psi_n(\alpha \otimes x) \, , \, \psi_{m}(\beta \otimes y) \,]_G.$$
\end{proof}

\section{ A basis for $H^*(G,K)$ with $G$ a cyclic group.}
The cohomology of a group $G$ with coefficients in the trivial $A$-module $K$ 
is the graded $K$-vector space 
$$H^*(G,K)=Ext^*_{A}(K,K).$$ 
The {\it{standard bar resolution}} can be used to compute the cohomology of G and 
is given by the following exact sequence: 
\[
\bold{B}:= \qquad 
\cdots 
\rightarrow
A[G^{\times n}]
\stackrel{d_{n}}{\longrightarrow} \, 
A[G^{\times n-1}] 
\rightarrow
\cdots  
\rightarrow \, 
A [G]  
\stackrel{d_1}{\longrightarrow}    \,
A[\,]
\stackrel{d_0}{\longrightarrow}   
K 
\rightarrow                    \, 
0
\]
where $A[G^{\times n}]$ is the free $A$-module with basis $G^{\times n}$, 
$d_0[\,]=1$, $d_1[\, x \, ]=x\, [\,]-[\,]$ and for $n > 1$
\[
\begin{array}{rcl}
d_{n} [\, x_1 \, | \,  \cdots \, | \,  x_{n}\, ] & = & 
x_1\, [\, x_2 \, | \,  \cdots \, | \,  x_{n}\, ] \\  
\, &+&
\displaystyle{\sum_{i=1}^{n-1} \, (-1)^{i} }
[\, x_1 \, | \,  \cdots \, | \,  x_ix_{i+1} \, | \,  \cdots \, | \,  x_{n}\, ] \\ 
\, & \, & + (-1)^{n}[\, x_1 \, | \,  \cdots  \, | \,  x_{n}\, ]
\end{array}
\] 
where $[\, x_1 \, | \,  \cdots \, | \,  x_{n+1}\, ] \in G^{\times n+1}$. 
After applying the functor $Hom_{A}(-,K)$ to the above resolution we obtain 
the complex $\bold{C}$ of section 1. 

From now on, let $G=<g>$ be the cyclic group of order $|G|$ 
with generator $g$. 
In order to compute the cohomology of the cyclic group, 
one can use the {\it{minimal projective resolution}}, 
which is given by the following exact sequence: 
\[
\bold{M}:= \: 
\cdots 
\rightarrow
A\,v_{n+1}
\stackrel{g-1}{\longrightarrow} \, 
A\,v_{n} 
\stackrel{T}{\longrightarrow} \, 
A\,v_{n-1} 
\stackrel{g-1}{\longrightarrow}   
\cdots
\stackrel{T}{\longrightarrow}   
A \,v_1
\stackrel{g-1}{\longrightarrow}  \, 
A \, v_0
\stackrel{\epsilon}{\longrightarrow}   
K 
\rightarrow                    \, 
0
\]
where $\epsilon$ is the augmentation map, 
$T=1+g+g^2+\cdots+g^{|G|-1} $, and  
the differentials are either multiplication by $g-1$ or $T$ depending on the degree.

Let $p$ be an odd prime. 
Assume from now on that the characteristic of $K$ is $p$ and divides $|G|$. 
The objective of this section is to present an explicit basis for $H^*(G,K)$. 
To do so we will give two comparison maps 
between the bar resolution and the minimal resolution, i.e. 
$q:\bold{B} \rightarrow \bold{M}$  and $s:\bold{M} \rightarrow \bold{B}.$
Therefore, we will define two families of morphisms of $A$-modules,
$q=(q_n)$ and $s=(s_n)$, such that the next diagrams commute:
\begin{equation}\label{diagrama}
\xymatrix{
A[G^{\times n+1}] \ar[d]_{q_{n+1}} \ar[r]^(.55){d_{n+1}} & 
A[G^{\times n}] \ar[d]_{q_n} \ar[r]^(.45){d_{n}} & 
A[G^{\times n-1}] \ar[d]_{q_{n-1}} \ar@{}[r] |{\cdots \cdots } & 
A[G] \ar[r]^{d_1} \ar[d]_{q_1} & 
 A[\,] \ar[d]_{q_0} \ar[r]^{d_{0}} 
& K \ar[d]_{id} \ar[r] & 0 \\
A\,v_{n+1}  \quad \ar[r]^(.55){g-1} \ar[d]_{s_{n+1}} &
A\,v_{n} \ar[r]^(.4)T \ar[d]_{s_n} &
A\,v_{n-1} \ar[d]_{s_{n-1}} \ar@{}[r] |{\cdots \cdots}&
A\,v_1 \ar[r]^{g-1} \ar[d]_{s_1}&
A\,v_0 \ar[r]^{\epsilon} \ar[d]_{s_0} 
& K \ar[d]_{id} \ar[r] & 0 \\
A[G^{\times n+1}] \ar[r]^(.55){d_{n+1}} & 
A[G^{\times n}]  \ar[r]^(.45){d_{n}}& 
A[G^{\times n-1}] \ar@{}[r] |{\cdots \cdots} & 
A[G] \ar[r]^{d_1} &
A[\,]  \ar[r]^{d_{0}} 
& K  \ar[r] & 0 
}
\end{equation}

\subsection*{Map from the bar resolution to the minimal.}
We will use the following notation to define the map 
$q:\bold{B} \rightarrow \bold{M}.$
\begin{notation}
Let $i$ be a positive integer. Denote $[\,i \,]_g$ to be the element in $A$ given by:
\[
[\,i \,]_g = \begin{cases} 1+g+ \cdots +g^{i-1} & \text{ if } i \geq 1 \\ 0 & \text{ if } i=0 \end{cases}
\]
If $x \in G$ and $x=g^i$ with $i=0,\dots,|G|-1$ 
then we denote $$[\, x \, ]_g:=[\, i \, ]_g$$. 
\end{notation}

\begin{remark}
Notice that $[g]_g=1$ and 
for any $x \in G$ $$ (g-1)[x]_g=x-1.$$
\end{remark}

\begin{notation}
Set a total order relation on the set $G$ as follows:
\[
1 < g < g^2 < \cdots < g^{|G|-1}.
\]
This is, if $x=g^i$ and $y=g^j$  where $i,j=0, \dots, |G|-1$, 
$x<y$ if and only if  $i < j$. 
Given $x \in G$, let
\[
Q(x):=\{ \,  y \in G \text{ such that } x\, y \,< \,y \, \}.
\] 
\end{notation}

\begin{remark}
Notice that $Q(1)=\o$, $Q(g)=\{ g^{p-1} \}$ and 
$Q(g^{p-1})=G \backslash \{1\}$. Moreover, 
if $x=g^i$ and $y=g^j$  where $i,j=0, \dots, |G|-1$, 
$x \in Q(y)$ if $i+j>|G|$
\end{remark}

\begin{definition}\label{cn}
Given $n\geq2$ and 
$\underline{x}=[\,x_1 \, | \, \cdots \, | \, x_i \, | \, x_{i+1} \, | \,\cdots\,| \, x_n \, ] \in G^{\times n}$. 
We will say that $\underline{x}$ {\it{ satisfies the condition}} $C(n)$ if and only if :
\begin{itemize}
\item $x_i \in Q(x_{i+1}) \; \forall \; i$  \underline{odd}, $i \geq 1$ when {\it{$n$ is even}} or  
\item $x_i \in Q(x_{i+1}) \; \forall \; i$  \underline{even} $i \geq 1$ when {\it{$n$ is odd}}. 
\end{itemize}
\end{definition} 

\begin{map}\label{qu}
Define $q:\bold{B} \rightarrow \bold{M}$ to be the family $(q_n)_{n \geq 0}$ of morphisms of $A$-modules given as follows. 
For $n=0$, $q_0: A \,[ \,] \rightarrow A  v_0$ is the map given by ${q_0 [\:]=v_0}$. 
Define $q_1: \, A \,[ \, G \,] \, \rightarrow A\, v_1$ to be the morphism such that $q_1[\, x \, ]=[\, x\, ]_g v_1 $. 
Let $n=2k$ be an integer with $k \geq 1$. 
Define  
\[ 
\begin{array}{rlllr}
q_n: & \, A \,[ \, G^{\times n}\,  ] \, &\longrightarrow &A\, v_n & \text{ and }\\ 
q_{n+1}:& \, A \,[ \, G^{\times n+1} \,] \, &\rightarrow &A\, v_{n+1} & \,
\end{array}
\]
to be the morphisms of $A$-modules such that 
\[
\begin{array}{rcl}
q_n \,(\underline{x}) & = & 
\begin{cases} 
v_n \qquad \qquad & \text { if } \underline{x} \text{ satisfies the condition } C(n) \\ 
0 & \text{ otherwise } 
\end{cases} \\
\quad & \quad & \quad \\
q_{n+1}\, (\underline{x}) &=&
\begin{cases} 
[\, x_1 \, ]_g \,v_{n+1}& \text { if } \underline{x} \text{ satisfies the condition } C(n+1) \\ 
0 & \text{ otherwise. } 
\end{cases}
\end{array}
\]
\end{map}

\subsection*{Map from the minimal resolution to the bar resolution.}
Next, we will define the map 
$
s:\bold{M} \longrightarrow \bold{B}
$.
To do so we will introduce some notation. 

\begin{notation}
Let $\underline{i}^k=(i_1, \dots, i_k)$ such that $i_r \in \{0, \dots, |G|-1\}$ for $r=1, \dots, k$. 
Define $a(\underline{i}^k)$  as follows:
\[
\begin{array}{rclc}
a(i)&:=&g^{|G|-1-i}&\text{ for } k=1 \\
a(i_1, \dots, i_{k+1})&:=&
a(i_1, \dots, i_k)g^{|G|-(i_{k+1}+k+1)}&\text{ for } k\geq1. 
\end{array}
\] 
\end{notation}

\begin{map}\label{ese}
Define $s:\bold{M} \longrightarrow \bold{B}$ to be the family $(s_n)_{n \geq 0}$ of morphisms of {$A$-modules} 
given as follows. 
For $n=0$, $s_0: \, A\, v_0\, \rightarrow  A \,[ \quad ]$ is $s_0(v_0)=[\quad]$. 
Define $s_1: \,  A\, v_1 \, \rightarrow  A \,[ \, G \,]$ to be the morphism such that $s_1(v_1)=[\, g \, ]$. 
Let $n=2k$ be an integer with $k \geq 1$. Define 
\[
\begin{array}{rlllr}
s_n: & \, A\, v_n\, &\longrightarrow  &A \,[  \, G^{\times \, n} \, ]& \text{ and }\\ 
s_{n+1}:& \,  A\, v_{n+1} \, &\rightarrow  &A \,[ \, G^{\times \, n+1} \,]& \,
\end{array}
\]
to be the morphisms of $A$-modules such that 
\[
\begin{array}{rcl}
s_n(v_n)&=&\displaystyle{\sum_{\underline{i}_k}}
a(\underline{i}^k) [\, g\, | g^{i_1} \, | \, g \, | \, g^{i_2} \, | \, \cdots  \, | \,   g  \, | \,g^{i_k} \, ]\\
\, &\, & \, \\
s_{n+1}(v_{n+1})&=&\displaystyle{\sum_{\underline{i}_k}}
a(\underline{i}^k) 
[\, g\, | g^{i_1} \, | \, g \, | \, g^{i_2} \, | \, \cdots  \, | \,   g  \, | \,g^{i_k}    \, | \, g  \,]
\end{array}
\]
where the sums are over 
all $k$-tuple $(i_1, \dots, i_k)$ of positive integers that vary from $0$ to $|G|-1$.
\end{map}


\subsection*{Explicit isomorphims between $H^{n}(G,K)$ and $K$.}
We will prove in the next two sections that the maps 
$
q:\bold{B} \rightarrow \bold{M}$ and 
$s:\bold{M} \rightarrow \bold{B}$
presented above are chain maps. 
Once we prove this, we will obtain for each $n$
explicit isomorphisms: 
$$
H^n(G,K) \stackrel{s_n}{\longrightarrow} K \stackrel{q_n}{\longrightarrow} H^n(G,K), 
$$
induced maps from the above chain maps.
Let us begin by $n=1$. 

\begin{map}\label{firstdegree}
If $\lambda \in K$, then $q_1(\lambda)$ 
is the derivation given by $q_1(\lambda)(x)=\epsilon[x]_g \, \lambda.$ 
Now, if $\alpha \in H^1(G,K)$, 
then $s_1(\alpha)=\alpha(g)$. 
\end{map}

\begin{map}\label{higherdegrees} 
If $\lambda \in K$ then $q_n(\lambda) \in H^n(G,K)$ is represented by the map 
\linebreak 
${q_n(\lambda): G^{\times n} \rightarrow K}$ defined as follows: 
$$
q_n(\lambda)(\underline{x})=\begin{cases} 
\lambda & 
\text{if $n$ is even and $\underline{x}$ satisfies the condition $C(n)$ } \\
\epsilon [\, x_1\,]_g \lambda & 
\text{if $n$ is odd and $\underline{x}$ satisfies the condition $C(n)$} \\
\\ 0 & \text{otherwise. }\end{cases}
$$ 

For $n\geq 1$, 
take $\alpha$ in $H^n(G,K)$ represented 
by a map ${\alpha:G^{\times n} \rightarrow K}$, 
then 
$$
s_n(\alpha)=
\begin{cases}
\displaystyle{\sum_{i_1=0}^{|G|-1}\cdots \sum_{i_k=0}^{|G|-1}} 
\alpha [\,g \, | \, g^{i_1} \, | \, \cdots \, | \, g \, | g^{i_k} \,] & \text{ if $n=2k$} \\
\, & \, \\
\displaystyle{\sum_{i_1=0}^{|G|-1}\cdots \sum_{i_k=0}^{|G|-1}} 
\alpha[\,g \, | \, g^{i_1} \, | \, g \, | \, \cdots \, | \, g^{i_k} \, | \, g\,] & \text{ if $n=2k+1$.}
\end{cases}
$$
\end{map}

Notice that $q_n(1)$ will provide a non trivial element of $H^n(G,K)$ 
and then we will be able to prove the following result:

\begin{teo}\label{basis}
Let $p$ be an odd prime. 
Denote $A=KG$ where the characteristic of $K$ is $p$ 
and $G$ is the cyclic group of order $|G|$ generated by $g$.  
Assume that $p$ divides $|G|$. 
Set $\beta^0=1_K$. 
Let $\beta^1$ be the derivation in $H^1(G,K)$ given by $\beta^1(x)=\epsilon[x]_g$. 
For $n > 1$, let $\beta^n$ be the element in $H^n(G,K)$ represented by the map 
$\beta^n: G^{\times n} \rightarrow K$ defined as follows 
$$
\beta^{n} (\underline{x})= \begin{cases} 1 & \text{if $n$ is even and \underline{x} satisfies $C(n)$} \\
\beta^1(x_1) &  \text{if $n$ is odd and \underline{x} satisfies $C(n)$}\\
0 & \text{otherwise} \end{cases}
$$
where $C(n)$ is the condition given in Definition \ref{cn}. 
Then the set $\{\beta^n\}_{n \geq 0}$ is a basis of the $k$-vector space $H^*(G,K)$. 
\end{teo}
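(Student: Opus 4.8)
The plan is to reduce the statement to the one-dimensionality of each $H^n(G,K)$ together with the nonvanishing of the explicit class $\beta^n$, granting that $q$ and $s$ are comparison maps lifting $\mathrm{id}_K$ between the projective resolutions $\mathbf{B}$ and $\mathbf{M}$ (this being the content established in Sections 3 and 4). First I would compute $H^*(G,K)$ from $\mathbf{M}$: applying $Hom_A(-,K)$ to $\mathbf{M}$ annihilates every differential, since $g-1$ induces multiplication by $\epsilon(g-1)=0$ and $T$ induces multiplication by $\epsilon(T)=|G|\,1_K=0$ as $p$ divides $|G|$. Consequently $H^n(G,K)\cong Hom_A(Av_n,K)\cong K$ for every $n$, evaluation at $v_n$ giving the isomorphism. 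This is the decisive point: each graded piece is one dimensional, so it suffices to produce one nonzero class in each degree.

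Next I would identify $\beta^n$ with $q_n(1)$. Comparing the case distinction defining $\beta^n$ with the formula for $q_n(\lambda)$ in Map~\ref{higherdegrees} at $\lambda=1$, and noting that $\beta^1(x)=\epsilon[x]_g=q_1(1)(x)$, one gets $\beta^n=q_n(1)$ for all $n\geq0$; in particular $\beta^n$ is a genuine cocycle because $q$ is a chain map and every element of $Hom_A(\mathbf{M},K)$ is a cocycle. To see that this class is nonzero I would invoke the comparison theorem: $q\circ s$ and $s\circ q$ lift $\mathrm{id}_K$, hence are homotopic to the identities of $\mathbf{M}$ and $\mathbf{B}$, so the induced maps $q_n$ and $s_n$ are mutually inverse isomorphisms between $K$ and $H^n(G,K)$. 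Thus $s_n(\beta^n)=s_n(q_n(1))=1\neq0$.

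To keep the argument self-contained I would instead verify $s_n(\beta^n)=1$ directly from Map~\ref{higherdegrees}, using only that $s$ is a chain map. For $n=2k$ one evaluates $\beta^n$ on the tuples $[\,g\,|\,g^{i_1}\,|\,\cdots\,|\,g\,|\,g^{i_k}\,]$; condition $C(n)$ forces $g\in Q(g^{i_r})$ for each $r$, i.e. $g\cdot g^{i_r}<g^{i_r}$, which holds only for $i_r=|G|-1$, so the multiple sum collapses to the single term with all $i_r=|G|-1$, contributing $1$. For $n=2k+1$ the relevant tuples are $[\,g\,|\,g^{i_1}\,|\,\cdots\,|\,g^{i_k}\,|\,g\,]$ and $C(n)$ now forces $g^{i_r}\in Q(g)$, again only for $i_r=|G|-1$; the surviving term contributes $\beta^1(g)=\epsilon[g]_g=\epsilon(1)=1$. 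The coefficients $a(\underline{i}^k)$ occurring in $s_n(v_n)$ are powers of $g$, hence are sent by $\epsilon$ to $1$ and do not affect the count. In either case $s_n(\beta^n)=1$, so $\beta^n$ is nonzero in cohomology.

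Finally, since $H^*(G,K)=\bigoplus_{n\geq0}H^n(G,K)$ with each summand one dimensional and spanned by the nonzero class $\beta^n$, the family $\{\beta^n\}_{n\geq0}$ is a basis, linear independence across degrees being automatic from the grading. The real obstacle lies upstream rather than here: the entire argument depends on $q$ and $s$ being chain maps, which is exactly what Sections 3 and 4 must establish; within the present proof the only delicate step is the combinatorial observation that the order condition $C(n)$ selects precisely the tuple all of whose free exponents equal $|G|-1$.
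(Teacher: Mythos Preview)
Your proposal is correct and follows essentially the same line as the paper's proof: once $q$ is known to be a chain map lifting $\mathrm{id}_K$ between projective resolutions, the induced $q_n:K\to H^n(G,K)$ is an isomorphism, so $\beta^n=q_n(1)$ is a nonzero element of the one-dimensional space $H^n(G,K)$. You have simply filled in more detail than the paper does --- the explicit computation that $Hom_A(\mathbf M,K)$ has zero differentials, and the direct verification $s_n(\beta^n)=1$ via the combinatorics of $C(n)$ --- the latter being exactly what the paper records separately as the Corollary at the end of Section~4.
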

In order to prove this result, we need to show first that $q$ is a comparison map, which is the purpose of the next section. 


\section{Proof of theorem \ref{basis}.}
In this section, we will show that  
$
q:\bold{B} \rightarrow \bold{P} 
$ 
is a chain map and then we will be able to prove Theorem \ref{basis}. 
To do so we will first give some properties of the set $Q(x)$. 
The proof of the following lemma is straightfoward.

\begin{lemma} \label{quantumintegers} 
For $x, y \in G$,  
\begin{itemize}
\item[$(i)$] $1 \notin Q(x)$, 
\item[$(ii)$] If $x\ne 1$ then $x^{-1} \in Q(x)$,  
\item[$(iii)$] $x \in Q(y)$ if and only if $y \in Q(x)$, 
\item[$(iv)$] If $x \in Q(y)$, then 
$$ T=x[\, y \,]_g - [\, xy \,]_g+[\, x\, ]_g, $$
\item[$(v)$] If $x \notin Q(y)$ then 
$$ 0=x[\, y \,]_g - [\, xy \,]_g+[\, x\,]_g. $$
\end{itemize}
\end{lemma}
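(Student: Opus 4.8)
\emph{The plan.} I would reduce all five statements to a single arithmetic characterization of membership in $Q(y)$, together with the elementary telescoping of the partial sums $[\,i\,]_g$. Throughout, write $x=g^i$ and $y=g^j$ with $0\le i,j\le |G|-1$. First I would record the following \emph{Claim}: $x\in Q(y)$ if and only if $i+j\ge |G|$. To prove it, observe $xy=g^{i+j}$ and note that, since $0\le i+j\le 2|G|-2$, the reduced exponent of $xy$ is $i+j$ when $i+j<|G|$ and is $i+j-|G|$ when $i+j\ge |G|$. In the first case the exponent of $xy$ is $\ge j$, so $xy\ge y$ and $x\notin Q(y)$; in the second case the exponent is $i+j-|G|<j$ (because $i<|G|$, and $j\ge 1$ since $i\le |G|-1$), so $xy<y$ and $x\in Q(y)$. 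This settles the Claim, which is the precise form of the earlier remark.

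Granting the Claim, parts $(i)$--$(iii)$ are immediate. For $(i)$ I would take $y=1$, so $j=0$, and $i+0\ge |G|$ is impossible for $i\le |G|-1$, hence $1\notin Q(x)$; equivalently, $1\in Q(x)$ would force $x\cdot 1<1$, contradicting the minimality of $1$ in the order. For $(ii)$, if $x\ne 1$ then $x^{-1}=g^{|G|-i}$ with $1\le |G|-i\le |G|-1$, and $i+(|G|-i)=|G|\ge |G|$, so $x^{-1}\in Q(x)$. Part $(iii)$ is then just the symmetry of the condition $i+j\ge |G|$ under interchange of $i$ and $j$.

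For $(iv)$ and $(v)$ I would compute the element $x\,[\,y\,]_g+[\,x\,]_g$ of $A$ by hand. Since $x\,[\,y\,]_g=g^i+g^{i+1}+\cdots+g^{i+j-1}$ and $[\,x\,]_g=1+g+\cdots+g^{i-1}$, their sum telescopes to $\sum_{l=0}^{i+j-1}g^l$, where exponents are read modulo $|G|$. If $x\notin Q(y)$, then $i+j\le |G|-1$ by the Claim, so $xy=g^{i+j}$ has reduced exponent $i+j$ and $[\,xy\,]_g=\sum_{l=0}^{i+j-1}g^l$ coincides with the sum just computed; hence $x\,[\,y\,]_g-[\,xy\,]_g+[\,x\,]_g=0$, giving $(v)$. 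If $x\in Q(y)$, then $i+j\ge |G|$ and the summation range splits as $\sum_{l=0}^{|G|-1}g^l+\sum_{l=|G|}^{i+j-1}g^l$; the first block is $T$, and after reducing exponents by $|G|$ the second block equals $\sum_{l=0}^{i+j-|G|-1}g^l=[\,xy\,]_g$, since the reduced exponent of $xy$ is $i+j-|G|$. Thus $x\,[\,y\,]_g+[\,x\,]_g=T+[\,xy\,]_g$, which rearranges to $(iv)$.

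The only point requiring care—and the reason the computation is not entirely automatic despite being ``straightforward''—is the bookkeeping of exponents modulo $|G|$ in $(iv)$ and $(v)$: one must notice that $[\,xy\,]_g$ is built from the \emph{reduced} exponent of $xy$, whereas $x\,[\,y\,]_g+[\,x\,]_g$ is the un-reduced partial sum, and that the discrepancy between them is exactly one full cycle $T$ precisely when the sum wraps around, that is, precisely when $i+j\ge |G|$, equivalently $x\in Q(y)$. Because $i+j\le 2|G|-2$, the sum wraps around at most once, so no further correction terms arise and the dichotomy $(iv)$/$(v)$ exhausts all cases; the degenerate instances $i=0$ or $j=0$ (where one of the partial sums is empty) are covered by the same formulas.
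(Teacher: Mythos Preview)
Your proof is correct and is precisely the direct verification the paper has in mind (the paper's own proof reads only ``Straightforward verification''). One minor slip worth fixing: in establishing the Claim you test whether $xy<y$, which by the definition $Q(x)=\{y:xy<y\}$ is the condition for $y\in Q(x)$ rather than for $x\in Q(y)$ (the latter reads $yx<x$); since the resulting arithmetic criterion $i+j\ge |G|$ is symmetric in $i$ and $j$, the Claim as stated and everything downstream remain valid.
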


\begin{proposition} \label{induccionuno}
Let $q_1$ and $q_2$ defined as above. Then 
$$q_{1} \, d_{2} =T\,q_{2}$$
\end{proposition}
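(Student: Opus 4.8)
The plan is to reduce the identity to Lemma~\ref{quantumintegers} by checking it on generators. Since $q_1$, $q_2$, $d_2$ and the differential $T$ are all morphisms of $A$-modules and $G^{\times 2}$ is an $A$-basis of $A[\,G^{\times 2}\,]$, it suffices to verify that $q_1 d_2(\underline{x}) = T q_2(\underline{x})$ for every basis element $\underline{x} = [\, x_1 \, | \, x_2 \,]$ with $x_1, x_2 \in G$.

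First I would compute the left-hand side. From the definition of the bar differential,
$$d_2[\, x_1 \, | \, x_2 \,] = x_1[\, x_2 \,] - [\, x_1 x_2 \,] + [\, x_1 \,],$$
and since $q_1$ is $A$-linear with $q_1[\, x \,] = [\, x \,]_g \, v_1$, this yields
$$q_1 d_2[\, x_1 \, | \, x_2 \,] = \bigl( x_1 [\, x_2 \,]_g - [\, x_1 x_2 \,]_g + [\, x_1 \,]_g \bigr) v_1.$$

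Next I would compute the right-hand side by splitting into cases according to the condition $C(2)$, that is, according to whether $x_1 \in Q(x_2)$. By the definition of $q_2$, if $x_1 \in Q(x_2)$ then $q_2[\, x_1 \, | \, x_2 \,] = v_2$, so that $T q_2[\, x_1 \, | \, x_2 \,] = T v_1$, recalling that the differential $A v_2 \to A v_1$ in the minimal resolution is multiplication by $T$; and if $x_1 \notin Q(x_2)$ then $q_2[\, x_1 \, | \, x_2 \,] = 0$, so the right-hand side vanishes.

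Finally I would match the two sides using parts $(iv)$ and $(v)$ of Lemma~\ref{quantumintegers}. When $x_1 \in Q(x_2)$, part $(iv)$ states $x_1[\, x_2 \,]_g - [\, x_1 x_2 \,]_g + [\, x_1 \,]_g = T$, so the left-hand side equals $T v_1$; when $x_1 \notin Q(x_2)$, part $(v)$ states that the same expression is $0$, so the left-hand side vanishes. In both cases the two sides coincide. There is no genuine obstacle here, as this is just the base step of the commutativity of the diagram \eqref{diagrama}; the only points demanding care are to read off the correct differential (namely $T$, not $g-1$) in this degree and to align the dichotomy $x_1 \in Q(x_2)$ versus $x_1 \notin Q(x_2)$ with the two cases of the lemma.
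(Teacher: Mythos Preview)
Your proposal is correct and follows essentially the same argument as the paper: compute $q_1 d_2$ on a basis element $[\,x_1\,|\,x_2\,]$, obtain $(x_1[\,x_2\,]_g - [\,x_1x_2\,]_g + [\,x_1\,]_g)v_1$, and then invoke parts $(iv)$ and $(v)$ of Lemma~\ref{quantumintegers} in the two cases $x_1 \in Q(x_2)$ and $x_1 \notin Q(x_2)$ to match with $T q_2$. The only difference is that you spell out the $A$-linearity reduction and the right-hand side a bit more explicitly than the paper does.
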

\begin{proof}
Let $[\, x_1 \, | \, x_2\,]$ be an element of $G^{\times 2}$, 
then 
$$
\begin{array}{rcl}
q_1d_2[\, x_1 \, | \, x_2\,] &= &x_1\, q_1[\, x_2\,] - q_1[\, x_1x_2\,] +q_1[\, x_1\, ] \\
\, &= &(x_1[\, x_2\,]_g-[\, x_1x_2\,]_g+[\,x_1\,]_g)v_1.
\end{array}
$$ 
There are two cases. 
In the first one, we suppose $x_1 \in Q(x_2)$ 
then by Lemma \ref{quantumintegers} $(iv)$, 
$q_1d_2[\, x_1 \, | \, x_2\,]=T\, v_1$. 
In the second one, we suppose that $x_1 \notin Q(x_2)$,  
then by Lemma \ref{quantumintegers} $(v)$, $q_1d_2[\, x_1 \, | \, x_2\,]=0$. 
We conclude that $q_1d_2=T\,q_2$. 
\end{proof}

\begin{lemma}\label{auxiliar} 
Let $x$, $y$ and $z$ be elements of $G$. 
\begin{itemize}
\item[$(i)$] If $x \in Q(y)$ and $y \in Q(z)$ are such that 
$xy \ne 1$ and $yz \ne 1$ then 
\[
xy \in Q(z) \Leftrightarrow x \in  Q(yz) 
\]
\item[$(ii)$] If $x \in Q(y)$ and $y \in Q(z)$ are such that 
$xy =1$ and $yz \ne 1$ then 
\[
x \notin Q(yz).
\]
\item[$(iii)$] If $x \notin Q(y)$ and $y \in Q(z)$ then 
\[
xy \in Q(z) \text{ and } x \notin Q(yz).
\]
\item[$(iv)$] If $x \notin Q(y)$ and $y \notin Q(z)$ then 
\[
xy \in Q(z) \Leftrightarrow x \in Q(yz).
\]
\end{itemize}
\end{lemma}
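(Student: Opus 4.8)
The plan is to reduce every item to elementary arithmetic with exponents. Write $x = g^a$, $y = g^b$, $z = g^c$ with $a,b,c \in \{0,\dots,|G|-1\}$ and abbreviate $N = |G|$. The first step is to sharpen the remark preceding Definition \ref{cn} into an exact criterion: for these representatives,
\[
x \in Q(y) \iff a + b \geq N.
\]
I would prove this by comparing the exponent of $xy$ with that of $y$ in the three cases $a+b<N$, $a+b=N$, $a+b>N$; the borderline $a+b=N$ gives $xy=1<y$, so it belongs to the $Q$-side. In particular $xy=1$ holds exactly when $a+b \equiv 0 \pmod N$, and since $0 \leq a+b \leq 2N-2$ this forces $a+b \in \{0,N\}$. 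This single criterion is the only computational tool needed, and one checks it is consistent with parts $(i)$--$(iii)$ of Lemma \ref{quantumintegers}.

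The second ingredient is the exponent of a product together with its \emph{carry}: the exponent of $xy$ equals $a+b$ when $a+b<N$ and equals $a+b-N$ when $a+b \geq N$, and likewise for $yz$. The whole point of the hypotheses $xy \neq 1$ versus $xy = 1$ (and similarly for $yz$) is to decide whether this carry occurs: combined with $x \in Q(y)$, that is $a+b \geq N$, the equality $xy=1$ is equivalent to $a+b=N$, while $xy \neq 1$ forces the strict inequality $a+b>N$. This is precisely what lets me replace $Q$-membership of a product by a clean linear inequality in $a,b,c$.

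With these in hand each item becomes a one-line comparison. In $(i)$ the hypotheses give $a+b>N$ and $b+c>N$, so the exponents of $xy$ and $yz$ are $a+b-N$ and $b+c-N$; then both $xy \in Q(z)$ and $x \in Q(yz)$ translate to the same condition $a+b+c \geq 2N$, giving the equivalence. In $(iv)$, where $a+b<N$ and $b+c<N$ produce no carry, both conditions translate to $a+b+c \geq N$, again identical. Item $(iii)$ follows because $y \in Q(z)$ means $b+c \geq N$, so $a+b+c \geq b+c \geq N$, which is exactly $xy \in Q(z)$; while $a+b<N$ yields $a+b+c < 2N$, so $x \notin Q(yz)$ (the sub-case $yz=1$ is absorbed, since then the exponent of $yz$ is $0$ and $x \in Q(yz)$ would need $a \geq N$). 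Item $(ii)$ is the borderline case: $xy=1$ forces $a+b=N$, so $x \in Q(yz)$ would demand $a+b+c \geq 2N$, i.e. $c \geq N$, which is impossible; hence $x \notin Q(yz)$.

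I do not anticipate a genuine obstacle. The only point requiring care is bookkeeping the carries, that is, consistently distinguishing $a+b<N$, $a+b=N$, and $a+b>N$ (and the analogous trichotomy for $b+c$), since these distinctions are exactly what the four sets of hypotheses encode. The cleanest write-up is therefore to establish the criterion $x \in Q(y) \iff a+b \geq N$ once and then let each of $(i)$--$(iv)$ be a direct substitution into it.
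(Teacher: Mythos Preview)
Your proposal is correct and is exactly the ``straightforward verification'' the paper invokes without writing out. Reducing to exponents and the carry criterion $x\in Q(y)\iff a+b\geq N$ is the natural way to unpack the four cases, and your treatment of the borderline $a+b=N$ (which is what distinguishes $(i)$ from $(ii)$) is handled correctly.
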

\begin{proof}
Straightforward verification. 
\end{proof}

\begin{proposition} \label{inducciondos}
Let $q_2$ and $q_3$ be defined as above. Then 
$$q_{2} \, d_{3} =(g-1)\, q_{3}$$
\end{proposition}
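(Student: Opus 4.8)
The plan is to evaluate both sides on an arbitrary free generator $[\, x_1 \, | \, x_2 \, | \, x_3 \,]$ of $A[G^{\times 3}]$ and to compare coefficients in the free module $A\,v_2$. Using the bar differential, $d_3[\, x_1 \, | \, x_2 \, | \, x_3 \,]=x_1[\, x_2 \, | \, x_3 \,]-[\, x_1x_2 \, | \, x_3 \,]+[\, x_1 \, | \, x_2x_3 \,]-[\, x_1 \, | \, x_2 \,]$, and since $q_2$ is $A$-linear and sends a pair $[\, y_1 \, | \, y_2 \,]$ to $v_2$ exactly when $y_1 \in Q(y_2)$ (this is the content of $C(2)$) and to $0$ otherwise, I would obtain that $q_2 d_3[\, x_1 \, | \, x_2 \, | \, x_3 \,]=\big(\chi_1\,x_1-\chi_2+\chi_3-\chi_4\big)v_2$, where $\chi_1,\chi_2,\chi_3,\chi_4\in\{0,1\}$ are the indicators of the statements $x_2 \in Q(x_3)$, $x_1x_2 \in Q(x_3)$, $x_1 \in Q(x_2x_3)$ and $x_1 \in Q(x_2)$ respectively. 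On the other side, $q_3[\, x_1 \, | \, x_2 \, | \, x_3 \,]=[\, x_1 \,]_g v_3$ when $x_2 \in Q(x_3)$ (the condition $C(3)$) and $0$ otherwise; applying the differential $g-1$ and using the identity $(g-1)[\, x_1 \,]_g=x_1-1$ from the remark yields $(g-1)q_3[\, x_1 \, | \, x_2 \, | \, x_3 \,]=\chi_1(x_1-1)v_2$.

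Comparing the two expressions, the terms $\chi_1 x_1 v_2$ cancel and the proposition reduces to the purely combinatorial identity $\chi_1-\chi_2+\chi_3-\chi_4=0$ in $K$, that is, to the claim that $\chi_3-\chi_2=\chi_4-\chi_1$. I would establish this by distinguishing cases according to the two booleans $x_1 \in Q(x_2)$ and $x_2 \in Q(x_3)$, which is exactly the situation analysed in Lemma \ref{auxiliar}. When $x_1 \notin Q(x_2)$ and $x_2 \in Q(x_3)$, part $(iii)$ gives $\chi_2=1$ and $\chi_3=0$; when $x_1 \notin Q(x_2)$ and $x_2 \notin Q(x_3)$, part $(iv)$ gives $\chi_2=\chi_3$; and when $x_1 \in Q(x_2)$ and $x_2 \in Q(x_3)$, part $(i)$ gives $\chi_2=\chi_3$ in the generic case, while the degenerate subcases $x_1x_2=1$ or $x_2x_3=1$ force $\chi_2=\chi_3=0$ by part $(ii)$ and its symmetric counterpart, together with $1 \notin Q(x_3)$ from Lemma \ref{quantumintegers}$(i)$ and $Q(1)=\emptyset$. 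In each of these regimes the target identity is immediate.

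The one configuration not literally listed in Lemma \ref{auxiliar} is $x_1 \in Q(x_2)$ with $x_2 \notin Q(x_3)$, and handling it is the only genuinely delicate point. Here I would exploit the symmetry $x \in Q(y) \Leftrightarrow y \in Q(x)$ of Lemma \ref{quantumintegers}$(iii)$ together with the commutativity of $G$: the reversed triple $(x_3,x_2,x_1)$ then satisfies the hypotheses of part $(iii)$, and translating its two conclusions back through the symmetry yields $x_1 \in Q(x_2x_3)$ and $x_1x_2 \notin Q(x_3)$, i.e. $\chi_3=1$ and $\chi_2=0$, so that $\chi_1-\chi_2+\chi_3-\chi_4=0-0+1-1=0$. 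Once all four regimes are checked the coefficient identity holds in every case, and hence $q_2 d_3=(g-1)q_3$. I expect the bookkeeping of the degenerate products and this last reduction via symmetry to be the main obstacle; everything else is the mechanical evaluation of the two composites.
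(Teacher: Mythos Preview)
Your proposal is correct and follows essentially the same approach as the paper: both arguments evaluate the two composites on a generator $[\,x_1\,|\,x_2\,|\,x_3\,]$, split into the four cases governed by the booleans $x_1\in Q(x_2)$ and $x_2\in Q(x_3)$, and appeal to the relevant parts of Lemma~\ref{auxiliar}. Your indicator-function bookkeeping is a cosmetic reorganization, and you are simply more explicit than the paper about the symmetry reduction (via Lemma~\ref{quantumintegers}$(iii)$) that underlies the appeal to Lemma~\ref{auxiliar}$(iii)$ in the case $x_1\in Q(x_2)$, $x_2\notin Q(x_3)$.
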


\begin{proof}
Let $[\, x_1 \, | \, x_2\,| \, x_3\,]$ be an element of $G^{\times 3}$, 
then 
$$
\begin{array}{rcl}
q_2d_3[\, x_1 \, | \, x_2\, | \, x_3 \,] &=
&x_1\, q_2[\, x_2\, |\, x_3] - q_2[\, x_1x_2\, | \, x_3] + 
q_2[\, x_1 \, | \, x_2x_3 \,] -q_2[\, x_1\, | \, x_2 \, ]. \\
\end{array}
$$ 
Notice that if $x_i=1$ for any $i$ then $q_2d_3[\, x_1 \, | \, x_2\,| \, x_3\,]=0$ 
and the statement is true.  
So we will suppose that $x_i \ne 1$ for all $i$. 
The proof is divided into four cases.  
In the first one, we suppose that $x_1 \in Q(x_2)$ and $x_2 \in Q(x_3)$.  
Using $(i)$ and $(ii)$ from Lemma \ref{auxiliar}, 
$$q_2d_3[\, x_1 \, | \, x_2 \, | \, x_3\,]=
x_1\, q_2[\, x_2\, |\, x_3] -q_2[\, x_1\, | \, x_2 \, ]=x_1v_2-v_2$$ 
and the statement is true.  
Now, in the second case we suppose that $x_2 \in Q(x_3)$ 
but $x_1 \notin Q(x_2)$. Using $(iii)$ from Lemma \ref{auxiliar}, 
$$q_2d_3[\, x_1 \, | \, x_2 \, | \, x_3\,]=
x_1\, q_2[\, x_2\, |\, x_3] -q_2[\, x_1x_2\, | \, x_3 \, ]=x_1v_2-v_2$$ 
and the statement is also true for this case. 
The third case is when $x_1 \in Q(x_2)$ but $x_2$ is not in $Q(x_3)$. 
The statement follows from Lemma \ref{auxiliar} $(iii)$, 
$$q_2d_3[\, x_1 \, | \, x_2 \, | \, x_3\,]=
q_2[\, x_1\, |\, x_2x_3] -q_2[\, x_1| \, x_2 \, ]=0.$$ 
The last case to consider is when 
$x_1$ is not in $Q(x_2)$ and $x_2$ is not in $Q(x_3)$. 
Using $(iv)$ from Lemma \ref{auxiliar}, 
$$q_2d_3[\, x_1 \, | \, x_2 \, | \, x_3\,]=
q_2[\, x_1x_2\, |\, x_3] -q_2[\, x_1| \, x_2x_3 \, ]=0.$$ 
We conclude that $q_2d_3=(g-1)\,q_3$.
\end{proof}

\begin{notation} Let $n \in \bf{N}$, $n>2$. 
Let  $\underline{x}:=[\, x_1 \, | \, \cdots \, | \, x_n\,] \in G^{\times n}$. 
For $i=0,\dots,n$, we denote $r_i(\underline{x})$ the following element of 
$A[\, G^{\times n-1}\,]$: 
$$
\begin{array}{rcll}
r_0(\underline{x})&:=& x_1\, q_{n-1}[\, x_2 \, | \, \cdots \, | \, x_n \, ]&\, \\
r_i(\underline{x})&:=&
q_{n-1}[\,x_1 \, | \, \cdots \, | \,  x_{i}x_{i+1}   \, | \, \cdots \, | \,x_n\ \,] & 
\text{ for } i=1,\dots,n-1 \\
r_n(\underline{x})&:=&
q_{n-1}[\,x_1 \, | \, \cdots \, | \,x_{n-1}\ \,] & \, 
\end{array}
$$
\end{notation}

\begin{lemma}\label{ecuacionuno}
Let $n=2k$ with $k > 1$. Consider the maps $q_{n-1}$, $q_{n}$ and $q_{n+1}$ defined above.
\begin{itemize}
\item[$(i)$]  Let  ${\underline{x}:=[\, x_1 \, | \, \cdots \, | \, x_n\,] \in G^{\times n}}$. 
If ${x_i \in Q(x_{i+1})}$ for $i=2,\dots, n-1$ then 
$$q_{n-1}d_n(\underline{x}) =
r_{0}(\underline{x})-r_1(\underline{x})+r_n(\underline{x}) = 
\begin{cases} T \,v_{n-1} & \text{ if  $x_1 \in Q(x_2)$}  \\ 0 & \text{ if  $x_1 \notin Q(x_2)$. } \end{cases}$$
\item[$(ii)$] Let  ${\underline{x}:=[\, x_1 \, | \, \cdots \, | \, x_{n+1}\,] \in G^{\times n+1}}$. 
If $x_i \in Q(x_{i+1})$ for $i=1,\dots, n$ then 
$$q_{n}d_{n+1}(\underline{x}) =r_0(\underline{x})-r_n(\underline{x})=(x_1-1)v_{n}$$
\end{itemize}
\end{lemma}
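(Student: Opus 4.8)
The plan is to expand $q_{n-1}d_n(\underline{x})$ by applying the morphism $q_{n-1}$ to each term produced by the bar differential $d_n$, and then to show that all but three of the resulting summands cancel. Since $n=2k$ is even we have $(-1)^n=1$, so by the formula for $d_n$ and the definition of the $r_i$,
\[
q_{n-1}d_n(\underline{x})=r_0(\underline{x})-r_1(\underline{x})+\sum_{i=2}^{n-1}(-1)^i r_i(\underline{x})+r_n(\underline{x}).
\]
Thus the whole content of part $(i)$ is the cancellation claim $\sum_{i=2}^{n-1}(-1)^i r_i(\underline{x})=0$, after which only $r_0-r_1+r_n$ survives.

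To prove this cancellation I would group the summands into the consecutive pairs $(r_{2l},r_{2l+1})$ for $l=1,\dots,k-1$; the number of middle terms $n-2=2k-2$ is even and the signs alternate starting with $+r_2$, so each pair contributes $r_{2l}-r_{2l+1}$. For a fixed $l$, the tuples defining $r_{2l}$ and $r_{2l+1}$ are obtained from $\underline{x}$ by merging $x_{2l}x_{2l+1}$, respectively $x_{2l+1}x_{2l+2}$, into a single slot, and both have odd length $n-1$. Recall from Definition \ref{cn} that for odd length the condition $C(n-1)$ only constrains the \emph{even} positions; the hypothesis $x_i\in Q(x_{i+1})$ for $2\le i\le n-1$ makes every such constraint automatic except the one sitting at the merged slot. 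Hence $r_{2l}\ne 0$ reduces to $x_{2l}x_{2l+1}\in Q(x_{2l+2})$ while $r_{2l+1}\ne 0$ reduces to $x_{2l}\in Q(x_{2l+1}x_{2l+2})$. Since $x_{2l}\in Q(x_{2l+1})$ and $x_{2l+1}\in Q(x_{2l+2})$, Lemma \ref{auxiliar}$(i)$ says these two conditions are equivalent; moreover, by Map \ref{qu}, $q_{n-1}$ on an odd tuple returns $[y_1]_g\,v_{n-1}$, a value depending only on the first entry, which is $x_1$ for both tuples. Therefore $r_{2l}=r_{2l+1}$ and the pair cancels.

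With the middle sum eliminated I would evaluate the three survivors directly. The hypothesis guarantees that $[x_2|\cdots|x_n]$, $[x_1x_2|x_3|\cdots|x_n]$ and $[x_1|\cdots|x_{n-1}]$ all satisfy $C(n-1)$, so that $r_0=x_1[x_2]_g\,v_{n-1}$, $r_1=[x_1x_2]_g\,v_{n-1}$ and $r_n=[x_1]_g\,v_{n-1}$, whence $r_0-r_1+r_n=(x_1[x_2]_g-[x_1x_2]_g+[x_1]_g)\,v_{n-1}$. Part $(i)$ then follows immediately from Lemma \ref{quantumintegers}$(iv)$ when $x_1\in Q(x_2)$ and from $(v)$ when $x_1\notin Q(x_2)$. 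Part $(ii)$ is entirely parallel: now the hypothesis also includes $i=1$, so the natural grouping is $(r_{2l-1},r_{2l})$ for $l=1,\dots,k$ and \emph{all} of $r_1,\dots,r_n$ cancel; here $q_n$ is the even map, which returns $v_n$ independently of the first entry, so the two survivors are $r_0=x_1v_n$ and the final term $v_n$, giving $(x_1-1)v_n$.

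The main obstacle is precisely this pairwise cancellation, and in particular the degenerate sub-cases in which one of the merged products $x_{2l}x_{2l+1}$ or $x_{2l+1}x_{2l+2}$ equals $1$, where Lemma \ref{auxiliar}$(i)$ no longer applies. When $x_{2l}x_{2l+1}=1$ one uses $1\notin Q(x)$ from Lemma \ref{quantumintegers}$(i)$ together with Lemma \ref{auxiliar}$(ii)$ to see that both members of the pair vanish; the remaining boundary case $x_{2l+1}x_{2l+2}=1$ is not covered by either part of Lemma \ref{auxiliar} and is best settled by a short exponent computation, writing $x_i=g^{a_i}$ and using the criterion $g^a\in Q(g^b)\Leftrightarrow a+b\ge|G|$, which again forces both terms to be zero. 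Checking that these boundary cases are exhaustive is the only genuinely delicate point; everything else is the bookkeeping outlined above.
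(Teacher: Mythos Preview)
Your argument is correct and follows the same route as the paper: expand $q_{n-1}d_n(\underline{x})=\sum_{i=0}^n(-1)^ir_i(\underline{x})$, pair the middle terms $(r_{2l},r_{2l+1})$, observe via Lemma~\ref{auxiliar} that each pair is either simultaneously zero or simultaneously equal to $[x_1]_g\,v_{n-1}$, and then evaluate the three survivors using Lemma~\ref{quantumintegers}. The paper's proof is the terse version of exactly this; it simply asserts ``$r_i\ne 0\Leftrightarrow r_{i+1}\ne 0$ for even $i=2,\dots,n-2$'' citing Lemma~\ref{auxiliar}$(i)$,$(ii)$.

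Your treatment is in fact more careful than the paper's on one point: the degenerate case $x_{2l+1}x_{2l+2}=1$ is not covered by Lemma~\ref{auxiliar}$(i)$ or $(ii)$, and you correctly dispose of it by the direct exponent computation. For part $(ii)$ your pairing $(r_{2l-1},r_{2l})$, $l=1,\dots,k$, leaves $r_0$ and the last term $r_{n+1}$ as the survivors, yielding $(x_1-1)v_n$; this is the right computation (the paper's ``$r_0-r_n$'' in the statement is evidently meant to be $r_0-r_{n+1}$, as only the latter gives $(x_1-1)v_n$).
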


\begin{proof}
$(i)$ We will compute $q_{n-1} d_{n}(\underline{x})$ using that  
$q_{n-1} d_{n}(\underline{x})=\sum_{i=0}^n (-1)^i \,r_i(\underline{x})$. 
Applying $(i)$ and $(ii)$ from lemma \ref{auxiliar}, notice that 
$$r_i(\underline{x}) \ne 0 \text{ if and only if  } r_{i+1}(\underline{x}) \ne 0$$ 
for all even integer $i=2,\dots,n-2$. Therefore,
\[
\begin{array}{rcl}
q_{n-1}d_n(\underline{x}) &=&r_0(\underline{x})-r_1(\underline{x})+r_n(\underline{x})\\
\, &=&x_1\,[\, x_2\,]_gv_{n-1} -[\, x_1x_2 \,]_gv_{n-1}+[\,x_1 \,]_gv_{n-1}. \\
\end{array}
\]
We apply lemma \ref{quantumintegers} to obtain that $q_{n-1}d_n(\underline{x})$ equals $T\,v_{n-1}$ if 
$x_1\in Q(x_2)$ and is zero otherwise. 
$(ii)$ The proof is similar to the proof of $(i)$
\end{proof}

\begin{lemma}\label{ecuaciondos}
Let $n=2k$ with $k > 1$. 
Consider the maps $q_{n-1}$, $q_{n}$ and $q_{n+1}$ defined above.  
\begin{itemize}
\item[$(i)$]Let  $\underline{x}:=[\, x_1 \, | \, \cdots \, | \, x_n\,] \in G^{\times n}$. 
Suppose that $x_i \in Q(x_i+1)$ for all odd integers $1 < i <n-1$ and that 
there exist some even integer $i$ such that 
$x_i \notin Q(x_{i+1})$. 
Denote by $j$ the smallest even integer such that $x_j$ is not in $Q(x_{j+1})$. 
Then 
$$ 
q_{n-1}d_n(\underline{x}) = r_0(\underline{x}) -
r_1(\underline{x})+r_j(\underline{x})= 
\begin{cases} T \,v_{n-1} & \text{ if  $x_1 \in Q(x_2)$}  \\ 0 & \text{ if  $x_1 \notin Q(x_2)$. } \end{cases}$$
\item[$(ii)$]Let  $\underline{x}:=[\, x_1 \, | \, \cdots \, | \, x_{n+1}\,]$ be in $G^{\times n+1}$. 
Suppose that $x_i \in Q(x_i+1)$ for all even integers $1 < i <n+1$ and that 
there exist some odd integer $i$ such that 
$x_i \notin Q(x_{i+1})$. 
Denote by $j$ the smallest odd integer such that $x_j$ is not in $Q(x_{j+1})$. 
Then 
$$ 
q_{n}d_{n+1}(\underline{x}) = r_0(\underline{x}) - r_j(\underline{x})=(x_1-1)v_n.
$$
\end{itemize}
\end{lemma}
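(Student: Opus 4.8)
The plan is to expand $q_{n-1}d_n(\underline{x})=\sum_{i=0}^{n}(-1)^i r_i(\underline{x})$ (recall $n$ is even, so the extreme terms $r_0$ and $r_n$ carry the sign $+1$) and to show that the failure at $j$ forces every term except $r_0$, $r_1$ and $r_j$ either to vanish or to cancel in consecutive pairs. Degenerate tuples with some $x_i=1$ are disposed of as in Proposition \ref{inducciondos}, so I assume $x_i\neq 1$ throughout. The heart of the argument is to locate, slot by slot, where each $Q$-condition defining $C(n-1)$ is tested, and to feed the appropriate clause of Lemma \ref{auxiliar} into it.

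For the indices below $j$ I would reproduce the telescoping of Lemma \ref{ecuacionuno}. If $i$ is even with $2\le i<j$, then $x_i\in Q(x_{i+1})$ (by minimality of $j$) and $x_{i+1}\in Q(x_{i+2})$ (by the odd-index hypothesis), so Lemma \ref{auxiliar}$(i)$ and $(ii)$ give the linking $r_i\neq 0\Leftrightarrow r_{i+1}\neq 0$; since merging at a slot $\ge 2$ never touches the first entry, whenever they are nonzero both $r_i$ and $r_{i+1}$ equal $[\,x_1\,]_g v_{n-1}$, so each pair $r_i-r_{i+1}$ cancels. One must check that the failure at $j$ does not spoil these terms: in each tuple with $i<j$ the entry $x_j$ is pushed to an \emph{odd} slot of $G^{\times n-1}$, which $C(n-1)$ never tests, and the only even-slot clause it contributes, $x_{j+1}\in Q(x_{j+2})$, holds by hypothesis. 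This cancels $r_2,\dots,r_{j-1}$ and leaves $r_0-r_1$ together with the terms indexed by $i\ge j$.

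The behaviour at and beyond $j$ is the step I expect to be the main obstacle, because it is exactly where one must abandon Lemma \ref{auxiliar}$(i)$ (whose hypothesis $x_j\in Q(x_{j+1})$ now fails) in favour of Lemma \ref{auxiliar}$(iii)$. I would show $r_j$ survives while every later term dies. First $r_j\neq 0$: its merged entry $x_jx_{j+1}$ sits at the even slot $j$, and since $x_j\notin Q(x_{j+1})$ while $x_{j+1}\in Q(x_{j+2})$, Lemma \ref{auxiliar}$(iii)$ yields $x_jx_{j+1}\in Q(x_{j+2})$, so $C(n-1)$ holds and $r_j=[\,x_1\,]_g v_{n-1}$. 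Next $r_{j+1}=0$: merging slots $j+1,j+2$ leaves $x_j$ at the even slot $j$ with successor $x_{j+1}x_{j+2}$, and Lemma \ref{auxiliar}$(iii)$ now gives $x_j\notin Q(x_{j+1}x_{j+2})$, violating $C(n-1)$. Finally, for $i>j+1$ the slots $j$ and $j+1$ are untouched, so $C(n-1)$ demands the false membership $x_j\in Q(x_{j+1})$ and every such $r_i$, in particular $r_n$, vanishes. Hence $q_{n-1}d_n(\underline{x})=r_0-r_1+r_j$.

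It remains to evaluate this. The tuples defining $r_0$, $r_1$ and $r_j$ all satisfy $C(n-1)$ and keep $x_1$ in their first slot, so $r_0=x_1[\,x_2\,]_g v_{n-1}$, $r_1=[\,x_1x_2\,]_g v_{n-1}$ and $r_j=[\,x_1\,]_g v_{n-1}$, whence $r_0-r_1+r_j=(x_1[\,x_2\,]_g-[\,x_1x_2\,]_g+[\,x_1\,]_g)\,v_{n-1}$. Lemma \ref{quantumintegers}$(iv)$ then gives $T\,v_{n-1}$ when $x_1\in Q(x_2)$ and Lemma \ref{quantumintegers}$(v)$ gives $0$ when $x_1\notin Q(x_2)$, which is $(i)$. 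Part $(ii)$ runs in parallel with the two parities interchanged: the map $q_n$ now sends a tuple satisfying $C(n)$ to $v_n$ with no $[\,\cdot\,]_g$ factor, the same analysis applied at the smallest \emph{odd} failure $j$ (again via Lemma \ref{auxiliar}$(iii)$) collapses the sum to $r_0-r_j=x_1 v_n-v_n=(x_1-1)v_n$, matching $(g-1)q_{n+1}(\underline{x})$ through the identity $(g-1)[\,x\,]_g=x-1$.
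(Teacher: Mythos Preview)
Your proof is essentially identical to the paper's: both expand $q_{n-1}d_n(\underline{x})=\sum_{i=0}^{n}(-1)^i r_i(\underline{x})$, use Lemma~\ref{auxiliar}$(i)$,$(ii)$ to cancel the pairs $r_i-r_{i+1}$ for even $i<j$, use Lemma~\ref{auxiliar}$(iii)$ to show that $r_j$ survives while $r_{j+1}$ (and all $r_i$ with $i>j+1$, since the untouched slot $j$ fails $C(n-1)$) vanish, and finish with Lemma~\ref{quantumintegers}. One small slip in your write-up: the tuples underlying $r_0$ and $r_1$ do \emph{not} keep $x_1$ in their first slot (their first entries are $x_2$ and $x_1x_2$, respectively), but the formulas $r_0=x_1[\,x_2\,]_g v_{n-1}$, $r_1=[\,x_1x_2\,]_g v_{n-1}$ that you then write down are correct, so the argument is unaffected.
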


\begin{proof}
We will compute $q_{n-1} d_{n}(\underline{x})$ using that 
$q_{n-1} d_{n}(\underline{x})=\sum_{i=0}^n (-1)^i \,r_i(\underline{x})$. 
Notice that $r_i(\underline{x})=0$ for all $i>j+1$ because of the definition of $q_{n-1}$. 
Moreover, $r_{j+1}(\underline{x})$ is also zero 
since $x_j$ is not in $Q(x_{j+1}x_{j+2})$ 
by $(iii)$ from lemma \ref{auxiliar}. 
Suppose that $j=2$. In this case $r_2(\underline{x}) \ne 0$ because 
$x_2x_3 \in Q(x_4)$ (use again $(iii)$ from lemma \ref{auxiliar}) and 
$x_i \in Q(x_{i+1})$ for odd integers $i>3$. 
Therefore, 
$$ 
q_{n-1}d_n(\underline{x}) = r_0(\underline{x}) - 
r_1(\underline{x})+r_2(\underline{x}).
$$
Suppose now that $j>2$. 
Notice that $r_j(\underline{x}) \ne 0$ because:  
(a) $x_i \in Q(x_{i+1})$ for all even integers $i$ such that $2 \leq i < j $;
(b) $x_jx_{j+1} \in Q(x_{j+2})$ because of $(iii)$ from lemma \ref{auxiliar}; 
and (c) $x_i \in Q(x_{i+1})$ for odd integers $i>j+1$.
Moreover, applying $(i)$ and $(ii)$ from lemma \ref{auxiliar}, 
notice that 
$r_i(\underline{x}) \ne 0$ if and only if 
$r_{i+1}(\underline{x}) \ne 0$, for all even integers $i < j$. 
Therefore, 
$$ 
\begin{array}{rcl}
q_{n-1}d_n(\underline{x}) &= &r_0(\underline{x}) -
r_1(\underline{x})+(-1)^jr_j(\underline{x}) \\
\, &=&x_1\,[\, x_2\,]_gv_{n-1} -[\, x_1x_2 \,]_gv_{n-1}+[\,x_1 \,]_gv_{n-1}.
\end{array}
$$
Then we apply lemma \ref{quantumintegers} 
and we obtain that $q_{n-1}d_n(\underline{x})$ is $T\,v_{n-1}$ if 
$x_1 \in Q(x_2)$ and zero otherwise. 
$(ii)$ The proof is similar to the proof of $(i)$. 
\end{proof}

\begin{proposition} 
The map 
$
q:\bold{B} \rightarrow \bold{M} 
$ 
defined in \ref{qu} is a chain map. 
\end{proposition}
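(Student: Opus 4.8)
The plan is to check that $q$ commutes with the differentials one square at a time, i.e.\ that $q_{n-1}\,d_n=\partial^{\mathbf{M}}_n\,q_n$ for every $n\ge 1$, where $\partial^{\mathbf{M}}_n\colon A\,v_n\to A\,v_{n-1}$ is the differential of the minimal resolution; by the shape of $\mathbf{M}$ one has $\partial^{\mathbf{M}}_n=T$ when $n$ is even and $\partial^{\mathbf{M}}_n=g-1$ when $n$ is odd. Since every map in sight is a morphism of $A$-modules, it suffices to verify each identity on a single basis element $\underline{x}=[\,x_1\,|\,\cdots\,|\,x_n\,]\in G^{\times n}$. The degrees $n=1,2,3$ are already settled: $n=1$ follows from $(g-1)[x]_g=x-1$, while $n=2$ and $n=3$ are Propositions \ref{induccionuno} and \ref{inducciondos}. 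I would therefore fix $n\ge 4$ and argue according to the parity of $n$.

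Take first $n=2k$ with $k>1$. The right-hand side $T\,q_n(\underline{x})$ equals $T\,v_{n-1}$ when $\underline{x}$ satisfies $C(n)$ — that is, $x_i\in Q(x_{i+1})$ for all odd $i$ — and is $0$ otherwise. I split according to whether every odd pair $(x_i,x_{i+1})$ with $i\ge 3$ is good. If so, then according as the even pairs are all good or not, Lemma \ref{ecuacionuno}(i) or Lemma \ref{ecuaciondos}(i) computes the left-hand side, and the quoted lemma returns $T\,v_{n-1}$ exactly when $x_1\in Q(x_2)$, i.e.\ exactly when $C(n)$ holds — matching the right-hand side. The one remaining possibility is that some odd pair with $i\ge 3$ fails; then $C(n)$ fails, the right-hand side is $0$, and I must show the left-hand side vanishes as well.

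This last family is the main obstacle, since it is precisely what Lemmas \ref{ecuacionuno} and \ref{ecuaciondos} are not set up to treat directly. Here I would expand $q_{n-1}\,d_n(\underline{x})=\sum_{i=0}^{n}(-1)^i\,r_i(\underline{x})$ and show the alternating sum telescopes to $0$ by the same device driving those lemmas: parts $(i)$--$(iv)$ of Lemma \ref{auxiliar} describe how membership in $Q$ propagates when two adjacent factors are merged, and they force the surviving terms to come in consecutive pairs $r_i,r_{i+1}$ which are equal and carry the opposite signs $(-1)^i$ and $(-1)^{i+1}$, hence cancel. The only bookkeeping is to locate the first odd index at which the $C(n)$-pattern breaks and check that every nonzero $r_i$ is matched by its neighbour; I expect this pairing argument to be the sole place demanding genuine care.

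The odd degrees $n=2k+1$ are handled symmetrically. Now $\partial^{\mathbf{M}}_n=g-1$, and $(g-1)\,q_n(\underline{x})$ equals $(x_1-1)\,v_{n-1}$ when $\underline{x}$ satisfies $C(n)$ (using $(g-1)[x_1]_g=x_1-1$) and $0$ otherwise. Parts $(ii)$ of Lemmas \ref{ecuacionuno} and \ref{ecuaciondos}, stated there with the even index, cover the configurations in which all the relevant even pairs are good, while those failing $C(n)$ at an even position beyond the first again vanish by the telescoping argument above. Combining the base cases, both parities, and the vanishing of every leftover configuration yields $q_{n-1}\,d_n=\partial^{\mathbf{M}}_n\,q_n$ for all $n$, so $q\colon\mathbf{B}\to\mathbf{M}$ is a chain map.
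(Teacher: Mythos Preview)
Your reduction to Lemmas~\ref{ecuacionuno} and~\ref{ecuaciondos} for the favourable configurations is fine, but the case you yourself flag as ``the main obstacle'' --- a tuple $\underline{x}$ for which some odd pair with index $\ge 3$ fails (for $n$ even; dually for $n$ odd) --- is left as a hope rather than a proof. You assert that the surviving $r_i(\underline{x})$ fall into consecutive sign-cancelling pairs by virtue of Lemma~\ref{auxiliar}, but you do not verify it, and it is not automatic: already for $n=6$ with $x_3\notin Q(x_4)$ the survivors among $r_2,\dots,r_6$ depend simultaneously on the status of $(x_2,x_3)$, $(x_4,x_5)$ and $(x_5,x_6)$, and the pairing pattern changes from one sub-case to the next. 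A direct combinatorial argument along these lines can presumably be pushed through, but it would itself require an induction or a systematic case analysis that you do not supply.

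The paper bypasses this entirely, and the mechanism is worth knowing. It proves the pair of identities $q_{n-1}d_n=Tq_n$ and $q_nd_{n+1}=(g-1)q_{n+1}$ by \emph{induction} on $k$ (with $n=2k$). At the inductive step one already knows $q_{n-2}d_{n-1}=(g-1)q_{n-1}$; composing with $d_n$ and using $d_{n-1}d_n=0$ in $\mathbf{B}$ yields $(g-1)\,q_{n-1}d_n(\underline{x})=0$, so $q_{n-1}d_n(\underline{x})$ lies in $\ker(g-1)=K\cdot T\,v_{n-1}$. In the troublesome case the bad odd pair at index $\ge 3$ forces $r_0(\underline{x})=r_1(\underline{x})=0$, and every remaining nonzero $r_i(\underline{x})$ (for $i\ge 2$) equals $[x_1]_g\,v_{n-1}$; hence the whole alternating sum is $c\,[x_1]_g\,v_{n-1}$ for some integer $c$. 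Since $[x_1]_g$ is never a nonzero scalar multiple of $T$, membership in $K\cdot T\,v_{n-1}$ forces $c=0$ in $K$. The induction thus converts the combinatorial cancellation you were aiming for into a one-line algebraic constraint, and this is precisely the idea missing from your proposal.
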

\begin{proof}
First of all, notice that $\epsilon \, q_0 =d_0 $ and $q_0 \, d_1    =(g-1)\, q_1$. 
Now, let $n=2k$  with $k \geq 1$. Consider the maps $q_{n-1}$, $q_{n}$ and $q_{n+1}$, 
we will prove by induction on $k$ that 
\begin{itemize}
\item[$(i)$] $q_{n-1}\,d_{n} =T\,q_{n}$ 
\item[$(ii)$] $q_n d_{n+1} =(g-1)\, q_{n+1}$. 
\end{itemize}
For $k=1$, we consider the maps $q_1$, $q_2$ and $q_3$. 
Notice that for these maps, the assertion is true because 
of Propositions \ref{induccionuno} and \ref{inducciondos}. 
Let $k > 1$ and suppose that $(i)$ and $(ii)$ are satisfied for the maps 
$q_{2k-1}$, $q_{2k}$ and $q_{2k+1}$. 
We set $n=2k+2$ and we will prove the statement 
for $q_{n-1}$, $q_{n}$ and $q_{n+1}$. 
Let $\underline{x}:=[\, x_1 \, | \, \cdots \, | \, x_n\,]$ be in $G^{\times n}$. 
We begin by showing that $d_{n} \, q_{n-1} =T\,q_{n}$. We consider two cases. 
In the first one, suppose that $x_i \in Q(x_i)$ for all odd integers $1 \leq i <n-1$. 
By Lemmas \ref{ecuacionuno} and \ref{ecuaciondos}, 
$d_{n} \, q_{n-1}\underline{x}=T\,q_{n}\underline{x}$. 
In the second case, suppose that for some odd integer
$1 \leq i  < n-1$, $x_i \notin Q(x_i)$. 
Let $L$ be the maximum of all odd integers between $1$ and $n$ 
such that $x_i \notin Q(x_i)$. If $L=1$ 
then $d_{n} \, q_{n-1} =0$ by Lemmas \ref{ecuacionuno} and \ref{ecuaciondos} 
and the statement $(i)$ is true. 
Suppose now that $L>1$, then $r_0(\underline{x})=0=r_1(\underline{x})$. 
Since all $r_i(\underline{x})$ are either zero or $[x_1]_gv_{n-1}$ for $i>1$, $d_{n} \, q_{n-1} =\,c\,[x_1]_gv_{n-1}$ where $c$ is an integer. 
By the inductive hypothesis, $d_{n} \, q_{n-1}$ 
must be in the image of the map given by the multiplication by $T$. 
We conclude then that the integer $c$ must be zero and the statement $(i)$ is true. 
A similar argument shows that the statement $(ii)$ is true. 
\end{proof}

\begin{proof}[Proof of Theorem \ref{basis}]
Since $q$ is a chain map, then $q_n$ induces an isomorphism between $H^n(G,K)$ and $K$, which we gave explicitly in \ref{firstdegree} and \ref{higherdegrees}. Therefore $q_n(1)$ is non zero and we deduce that $\{\beta_n\}_{n\geq 0}$ is a basis of $H^{*}(G,K)$.  
\end{proof}


\section{The chain map $s$.}
We will prove that the application 
$
s:\bold{M} \rightarrow \bold{B} 
$ 
defined in the section 2 is a chain map. 
To do so we need some technical results about the 
$a(i_1, \dots, i_k)$. 

\begin{lemma}\label{primero}
Let $(i_1, \dots, i_k)$ 
be a $k$-tuple of positive integers that vary from 0 to $|G|-1$. Then
\begin{itemize}
\item[$(i)$] $a(0)g=a(|G|-1)$ 
\item[$(ii)$] $a(0, i_2\dots,i_k)g=a(|G|-1,i_2\dots,i_k)$ 
\item[$(iii)$]$a(i)g=\:a(i-1)$ for $i\geq1$ 
\item[$(iv)$]$a(i_1,i_2,\dots,i_k)g=a(i_1-1,i_2\dots,i_k)$ for $i_1\geq1$

\end{itemize} 
\end{lemma}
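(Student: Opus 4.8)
The plan is to reduce all four identities to the single observation that every $a(\underline{i}^k)$ is a power of $g$ and that $g^{|G|}=1$ in $A$, since $g$ generates the cyclic group $G$ of order $|G|$. First I would dispatch the base cases $(i)$ and $(iii)$, which concern $a$ evaluated at a single index. By definition $a(i)=g^{|G|-1-i}$, so $a(i)\,g=g^{|G|-i}$. For $i\geq1$ this equals $g^{|G|-1-(i-1)}=a(i-1)$, which is $(iii)$; and for $i=0$ it is $g^{|G|}=1=g^{|G|-1-(|G|-1)}=a(|G|-1)$, which is $(i)$. The only point to watch is that exponents are read modulo $|G|$, which is harmless precisely because $g^{|G|}=1$.

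For the general statements $(ii)$ and $(iv)$ I would first record a structural fact that is immediate by unfolding the recursion in the definition of $a(\underline{i}^k)$: the leading index enters only through the base factor. Explicitly,
\[
a(i_1,i_2,\dots,i_k)=a(i_1)\,w,\qquad w:=\prod_{j=2}^{k} g^{\,|G|-(i_j+j)},
\]
where the tail $w$ depends on $i_2,\dots,i_k$ and on the positions $2,\dots,k$, but not at all on $i_1$. In particular the very same $w$ appears in $a(i_1-1,i_2,\dots,i_k)=a(i_1-1)\,w$ and in $a(|G|-1,i_2,\dots,i_k)=a(|G|-1)\,w$. The statements $(ii)$ and $(iv)$ then follow by pushing the right multiplication by $g$ past $w$: since $G$ is abelian, $g$ is central in $A$, so
\[
a(i_1,i_2,\dots,i_k)\,g=a(i_1)\,w\,g=\bigl(a(i_1)\,g\bigr)\,w.
\]
Applying $(iii)$ to the bracketed factor when $i_1\geq1$ gives $a(i_1-1)\,w=a(i_1-1,i_2,\dots,i_k)$, which is $(iv)$; applying $(i)$ when $i_1=0$ gives $a(|G|-1)\,w=a(|G|-1,i_2,\dots,i_k)$, which is $(ii)$. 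Thus $(ii)$ and $(iv)$ are simply $(i)$ and $(iii)$ transported across the $i_1$-independent tail $w$.

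There is no serious obstacle in this lemma; it is genuinely a straightforward verification. The only things requiring care are the bookkeeping of the exponent of $g$ modulo $|G|$ and the confirmation that the tail $w$ is truly independent of the leading index, which is exactly the place where commutativity of $G$ (equivalently, centrality of $g$ in $A$) is invoked.
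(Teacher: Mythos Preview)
Your argument is correct and essentially the same as the paper's. The paper proves $(i)$ and $(iii)$ by ``straightforward verification'' and proves $(ii)$ and $(iv)$ by induction on $k$, which amounts to commuting the extra $g$ past one tail factor $g^{|G|-(i_{k+1}+k+1)}$ at a time and invoking the inductive hypothesis; you simply unfold the recursion once and for all to $a(i_1,\dots,i_k)=a(i_1)\,w$ and commute $g$ past the whole tail at once. One cosmetic remark: since every term in sight is a power of the single element $g$, the commutation is automatic and does not actually require the abelianness of $G$, so your emphasis on centrality of $g$ is more than is needed here.
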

\begin{proof}
$(i)$ Straightforward verification. 
$(ii)$ Follows by induction on $k$. 
$(iii)$ Straightforward verification. 
$(iv)$ We prove it by induction on $k$. 
Notice that the statement is true for $k=1$ because of $(iii)$. 
Now, let $k\geq1$ and suppose that $a(i_1,i_2,\dots,i_k)g=a(i_1-1,i_2\dots,i_k)$. 
We will show that the statement is true for $k+1$. By definition and under the inductive hypothesis 
\[
\begin{array}{rcl}
a(i_1,i_2,\dots,i_{k+1})g&=&a(i_1,i_2,\dots,i_k)g^{|G|-(i_{k+1}+k+1)}g  \\
\, &=&a(i_1-1,i_2,\dots,i_k)g^{|G|-(i_{k+1}+k+1)} \\
\, &=&a(i_1-1,i_2,\dots,i_{k+1})
\end{array}
\]
\end{proof}

\begin{notation}
For $r=1,\dots,k$, denote $e_r$ the $k$-tuple such that the $r$-th 
coordinate is $1$ and the other coordinates are zero. 
\end{notation}

\begin{lemma}\label{tercero}
Let $k\geq2$ and let $\underline{i}^k:=(i_1, \dots, i_k)$  
be a $k$-tuple of positive integers that vary from 0 to $|G|-1$. 
Then for $1 \leq j<k$ 
\begin{itemize}
\item[$(i)$] 
$a(\underline{i}^k - (i_j-|G|+1)e_j -i_{j+1}e_{j+1})=
a(\underline{i}^k -i_{j}e_{j}-(i_{j+1}-|G|+1)e_{j+1})$  
\item[$(ii)$] 
$a(\underline{i}^k -e_j-i_{j+1}e_{j+1})=a(\underline{i}^k -(i_{j+1}-|G|+1)e_{j+1})$ 
for $i_{j} \geq1$ 
\item[$(iii)$] 
$a(\underline{i}^k -i_je_j-e_{j+1})=a(\underline{i}^k -(i_j-|G|+1)e_j)$ 
for $i_{j+1} \geq1$ 
\item[$(iv)$] $a(\underline{i}^k -e_j)=a(\underline{i}^k -e_{j+1})$ 
for $i_j \geq 1$ and $i_{j+1} \geq 1$. 
\item[$(v)$] $a(\underline{i}^k - (i_k-|G|+1)e_k)=a(\underline{i}^k - i_ke_k)g$
\item[$(vi)$] $a(\underline{i}^k - e_k)=a(\underline{i}^k )g$
\end{itemize} 
\end{lemma}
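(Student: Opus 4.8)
The plan is to collapse all six identities to a single closed formula for $a(\underline{i}^k)$ and then to compare exponents of $g$ modulo $|G|$. First I would prove by induction on $k$, directly from the recursive definition, that as an element of $A$
$$
a(i_1,\dots,i_k)=g^{\,k|G|-\binom{k+1}{2}-\sum_{r=1}^{k}i_r}.
$$
The case $k=1$ is exactly $a(i)=g^{|G|-1-i}$, and the inductive step amounts to multiplying the formula for $k$ by $g^{|G|-(i_{k+1}+k+1)}$ and using the identity $\binom{k+1}{2}+(k+1)=\binom{k+2}{2}$ to rewrite the new exponent.

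Because $g^{|G|}=1$ in $A$, the term $k|G|$ is irrelevant and the formula shows that $a(\underline{i}^k)$ depends only on the length $k$ and on the residue of $i_1+\cdots+i_k$ modulo $|G|$. Hence each of (i)--(iv) reduces to checking that the two tuples on its two sides have total index sums congruent modulo $|G|$, while for (v) and (vi) one records in addition the single extra factor $g$, i.e. a shift of the exponent by $1$. The hypotheses $i_j\ge 1$ and $i_{j+1}\ge 1$ that appear in (ii)--(iv) are used only to guarantee that every tuple occurring has all of its entries in $\{0,\dots,|G|-1\}$, so that $a$ is evaluated on legitimate arguments.

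Then I would dispatch the six cases by this exponent bookkeeping. Identities (i) and (iv) hold on the nose: in (i) both sides lower the total index sum by $i_j+i_{j+1}-|G|+1$, and in (iv) both lower it by $1$. In (ii) and (iii) the two index sums differ by exactly $|G|$---for instance in (ii) the left side lowers the sum by $1+i_{j+1}$ and the right side by $i_{j+1}-|G|+1$---which is invisible because $g^{|G|}=1$. For (vi) the extra factor $g$ raises the exponent by $1$, matching the decrease of the sum by $1$ on the left, while in (v) the two exponents again differ by a full $|G|$ that the factor $g$ together with $g^{|G|}=1$ reconciles.

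There is no genuinely hard step here: once the closed formula is available the lemma is pure exponent arithmetic, and indeed the four assertions of Lemma \ref{primero} are the special cases of this formula in which a single index is lowered by $1$ or wrapped around from $0$ to $|G|-1$. The only place demanding care is the consistent use of $g^{|G|}=1$: several of the identities (namely (ii), (iii) and (v)) are true only modulo $|G|$ and would be false as statements about integer exponents, so keeping track of exactly when an honest $|G|$ is absorbed is where an off-by-$|G|$ slip could occur.
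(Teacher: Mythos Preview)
Your argument is correct. The closed formula
\[
a(i_1,\dots,i_k)=g^{\,k|G|-\binom{k+1}{2}-\sum_{r=1}^{k}i_r}
\]
does follow by the one-line induction you sketch, and once it is in hand all six identities reduce to comparing the sums $\sum i_r$ on both sides modulo $|G|$, together with the extra shift by $1$ coming from the factor $g$ in (v) and (vi). Your remark that the side conditions $i_j\ge1$, $i_{j+1}\ge1$ serve only to keep the modified tuples inside $\{0,\dots,|G|-1\}^k$ is exactly right.

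This is a genuinely different route from the paper's. The paper never writes down a closed formula for $a(\underline{i}^k)$; instead it fixes one of the identities (it illustrates (iv)) and proves it by induction on $k$ directly from the recursive definition $a(\underline{i}^{k+1})=a(\underline{i}^{k,T})\,g^{|G|-(i_{k+1}+k+1)}$, splitting according to whether the index $j$ being manipulated is less than $k$ or equal to $k$. The remaining parts are left to analogous inductions. Your approach front-loads the work into deriving the closed form and then dispatches all six parts uniformly by exponent arithmetic; the paper's approach avoids the closed form but pays for it with a separate (if routine) induction for each part. Your method is shorter and makes the dependence of $a$ on only $k$ and $\sum i_r \pmod{|G|}$ completely transparent, which also explains at a glance why Lemma~\ref{primero} holds.
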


\begin{proof}
We illustrate the proof of $(iv)$. The proof is given by induction on $k$. 
For the first step, let $k=2$ and $e_1$. 
Then for $i_1, i_2 \geq 1$  that 
\[
a(i_1-1,i_2)= a(i_1,i_2)g =a(i_1)g^{|G|-(i_{2}+2)}g=a(i_1)g^{|G|-(i_{2}-1+2)} =a(i_1,i_2+1).
\]
Next, let $k \geq 2$. Suppose that the assertion is true for $k$ and every $1 \leq l < k$.  
We will show that the statement is true for $k+1$. We introduce first some notation. 
Given $\underline{i}^{k+1}$, an $(k+1)$-tuple  
denote by $\underline{i}^{k,T}$ the $k$-tuple obtained 
by forgetting the last coordinate of $\underline{i}^{k+1}$. 
For $l<k$, notice that 
$$
\begin{array}{rcl}
a(\underline{i}^{k+1} -e_r)&=&a(\underline{i}^{k,T} -e_{r})g^{|G|-(i_{k+1}+k+1)}\\ 
\, &=&a(\underline{i}^{k,T} -e_{r+1})g^{|G|-(i_{k+1}+k+1)} \\
\, &=&a(\underline{i}^{k+1} -e_{r+1})
\end{array}
$$
by definition of $a(\underline{i}^{k+1})$ and inductive hypothesis. Now, let us suppose that $r=k$. Given $\underline{i}^{k+1}$ 
denote by $\underline{i}^{k-1,T}$ the $k$-tuple obtained 
by forgetting the last two coordinate of $\underline{i}^{k+1}$. Then  
$$
\begin{array}{rcl}
a(\underline{i}^{k+1} -e_k)&=&a(\underline{i}^{k,T}-e_{k})g^{|G|-(i_{k+1}+k+1)}\\ 
\, &=&a(\underline{i}^{k-1,T})g^{|G|-(i_k-1+k)}g^{|G|-(i_{k+1}+k+1)} \\
\, &=&a(\underline{i}^{k-1,T})g^{|G|-(i_k+k)}gg^{|G|-(i_{k+1}+k+1)} \\
\, &=&a(\underline{i}^{k,T})g^{|G|-(i_{k+1}-1+k+1)} \\
\, &=&a(\underline{i}^{k+1} -e_{k+1})
\end{array}
$$
Then the statement is true for every $1 \leq l < k+1$. 
\end{proof}

\begin{notation}
Fix $n=2k$ an even positive integer. 
Let $\underline{i}^k:=(i_1, \dots, i_k)$ 
be a $k$-tuple of positive integers that vary from $0$ to $|G|-1$. \\
Denote 
\[
g_n^{\underline{i}^k}:=[\, g \, | \,  g^{i_1} \, | \, g \, | \,  g^{i_2} \, | \, \cdots \, | \, g \, | \, g^{i_k} \,] \, \in G^{\times n}.
\]
Denote 
\[
g_{n+1}^{\underline{i}^k}:=[\, g \, | \,  g^{i_1} \, | \, g \, | \,  g^{i_2} \, | \, \cdots \, | \, g \, | \, g^{i_k} \, | \, g \,] \, \in G^{\times n+1}.
\]
For $j=0,\ldots,n$ denote 
\[
\begin{array}{rcl}
\sigma_0^n({\underline{i}^k})&:=&
g\,[ \,  g^{i_1} \, | \, g \, | \,  g^{i_2} \, | \, \cdots \, | \, g \, | \, g^{i_k} \,] \, ,  \\
\sigma_1^n({\underline{i}^k})&:=&
[\,  g^{i_1+1} \, | \, g \, | \,  g^{i_2} \, | \, \cdots \, | \, g \, | \, g^{i_k} \,] \, ,\\
\sigma_{j}^n(\underline{i}^k)&:=&
\begin{cases}
[\, g \, | \,  g^{i_1} \, | \, \cdots \, | \, g \, | \,  g^{i_j+1} \, | \, g^{i_{j+1}}\, | \, \cdots \, | \, g \, | \, g^{i_k} \,] 
& \text{ if $1< j < n$ is even} \\
[\, g \, | \,  g^{i_1} \, | \, \cdots \, | \, g \, | \,  g^{i_j} \, | \, g^{i_{j+1}+1}\, | \, \cdots \, | \, g \, | \, g^{i_k} \,]
& \text{ if $1< j < n$ is odd} \\
\end{cases} \\
\sigma_{n}^n(\underline{i}^k)&:=&
[\, g \, | \,  g^{i_1} \, | \, g \, | \,  g^{i_2} \, | \, \cdots \, | \, g^{i_{k-1}} | \, g \, ] \,.
\end{array}
\]
For $j=0,\ldots,n+1$
denote 
\[
\begin{array}{rcl}
\sigma_0^{n+1}({\underline{i}^k})&:=& 
g\,[ \,  g^{i_1} \, | \, g \, | \,  g^{i_2} \, | \, \cdots \, | \, g \, | \, g^{i_k} \,  | \, g \,] \, ,\\
\sigma_1^{n+1}({\underline{i}^k})&:=& 
[\,  g^{i_1+1} \, | \, g \, | \,  g^{i_2} \, | \, \cdots \, | \, g \, | \, g^{i_k} \,  | \, g \,] \, ,\\
\sigma_{j}^{n+1}({\underline{i}^k})&:=&
\begin{cases}
[\, g \, | \,  g^{i_1} \, | \, \cdots \, | \, g \, | \,  g^{i_j+1} \, | \, g^{i_{j+1}}\, | \, \cdots \, | \, g \, | \, g^{i_k} \, | \, g \,] 
& \text{ if $1<j<$ is even} \\
[\, g \, | \,  g^{i_1} \, | \, \cdots \, | \, g \, | \,  g^{i_j} \, | \, g^{i_{j+1}+1}\, | \, \cdots \, | \, g \, | \, g^{i_k} \, | \, g \,]
& \text{ if $1<j<n$ is odd} \\
\end{cases}\\
\sigma_{n}^{n+1}({\underline{i}^k})&:=&
[\, g \, | \,  g^{i_1} \, | \, \cdots \, | \, g \, | g^{i_{k-1}} \, | \, g^{i_k+1}] \, , \\
\sigma_{n+1}^{n+1}({\underline{i}^k})&:=&
[\, g \, | \,  g^{i_1} \, | \, g \, | \,  g^{i_2} \, | \, \cdots \, | \, g \, | g^{i_k}] \, .
\end{array}
\]
\end{notation}

\begin{remark}
Let $n=2k$. Clearly,  	 
\[
\begin{array}{rcl}
d_{n}(g_n^{\underline{i}^k})&=&
\displaystyle{\sum_{j=0}^n (-1)^i \, \sigma_j^n({\underline{i}^k})} \\
d_{n+1}(g_{n+1}^{\underline{i}^k})&=&
\displaystyle{\sum_{j=0}^{n+1} (-1)^i \, \sigma_j^{n+1}({\underline{i}^k}) }
\end{array}
\]

\end{remark}

\begin{lemma} Let $n=2k$ where $k\geq1$. Then 
\[
\begin{array}{clcl}
(i)&\displaystyle{\sum _{\underline{i}^k} }\, 
a({\underline{i}^k}) \, 
\big( \, \sigma_0^n({\underline{i}^k}) \, - \, 
\sigma_1^n({\underline{i}^k}) \,  \big)
&=&0 \\
(ii)&\displaystyle{\sum _{\underline{i}^k} }\, 
a({\underline{i}^k}) \, 
\big( \, \sigma_0^{n+1}({\underline{i}^k}) \, - \, 
\sigma_1^{n+1}({\underline{i}^k}) \,  \big) 
&=&0
\end{array}
\]
where the sum is over 
all $k$-tuple $\underline{i}^k=(i_1, \dots, i_k)$ of positive integers that vary from 0 to $|G|-1$. 
For $k >1$, let $j$ be an even positive integer 
$1 < j < n$. Then
\[
\begin{array}{clcl}
(iii)&\displaystyle{\sum _{\underline{i}^k} }\, 
a({\underline{i}^k}) \, 
\big( \, \sigma_j^n({\underline{i}^k}) \, - \, 
\sigma_{j+1}^n({\underline{i}^k}) \,  \big)
&=&0 \\
(iv)&\displaystyle{\sum _{\underline{i}^k} }\, 
a({\underline{i}^k}) \, 
\big( \, \sigma_j^{n+1}({\underline{i}^k}) \, - \, 
\sigma_{j+1}^{n+1}({\underline{i}^k}) \,  \big) 
&=&0
\end{array}
\]
where the sum is over 
all $k$-tuple $\underline{i}^k=(i_1, \dots, i_k)$ of positive integers that vary from 0 to $|G|-1$.
\end{lemma}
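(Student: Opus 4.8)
The plan is to prove all four identities by a single mechanism: a re-indexing of the summation variables $(i_1,\dots,i_k)$ that turns the two simplices appearing in each difference into one and the same basis element of $A[G^{\times n}]$ (resp. $A[G^{\times n+1}]$), after which the vanishing of the sum reduces to a pointwise identity among the coefficients $a(\underline{i}^k)$ furnished by Lemma \ref{primero} and Lemma \ref{tercero}. The conceptual content is that shifting a single exponent by $\pm 1$ modulo $|G|$ matches the two merged entries, and the book-keeping of the coefficients is exactly what those lemmas record.

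First I would treat $(i)$. Here the term attached to $\sigma_0^n(\underline{i}^k)=g\,[\,g^{i_1}\,|\,g\,|\,\cdots\,|\,g^{i_k}\,]$ is $a(\underline{i}^k)\,g\,[\,g^{i_1}\,|\,\cdots\,]$, while $\sigma_1^n(\underline{i}^k)=[\,g^{i_1+1}\,|\,g\,|\,\cdots\,]$ carries the coefficient $a(\underline{i}^k)$. Using parts $(iv)$ and $(ii)$ of Lemma \ref{primero} one rewrites $a(\underline{i}^k)\,g=a(\underline{i}^k-e_1)$, with the first coordinate read modulo $|G|$ (part $(iv)$ for $i_1\ge 1$ and part $(ii)$ for the wrap-around $i_1=0$). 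The substitution $i_1\mapsto i_1+1 \pmod{|G|}$ is a bijection of the index set, since $g^{i_1+1}$ runs over all of $G$ as $i_1$ does, and it identifies $\sum_{\underline{i}^k}a(\underline{i}^k-e_1)[\,g^{i_1}\,|\,\cdots\,]$ with $\sum_{\underline{i}^k}a(\underline{i}^k)[\,g^{i_1+1}\,|\,\cdots\,]=\sum_{\underline{i}^k}a(\underline{i}^k)\,\sigma_1^n(\underline{i}^k)$, giving $(i)$. Identity $(ii)$ is proved verbatim, the only change being the additional final entry $g$ shared by $\sigma_0^{n+1}$ and $\sigma_1^{n+1}$, which is inert throughout.

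For $(iii)$ and $(iv)$, write $j=2r$ with $1\le r<k$. Inspecting the definitions, $\sigma_{2r}^n(\underline{i}^k)$ and $\sigma_{2r+1}^n(\underline{i}^k)$ agree in every entry except the adjacent pair produced by the merge: the former reads $\cdots|\,g\,|\,g^{i_r+1}\,|\,g^{i_{r+1}}\,|\,g\,|\cdots$ and the latter $\cdots|\,g\,|\,g^{i_r}\,|\,g^{i_{r+1}+1}\,|\,g\,|\cdots$. Substituting $i_r\mapsto i_r-1 \pmod{|G|}$ in the first sum and $i_{r+1}\mapsto i_{r+1}-1 \pmod{|G|}$ in the second makes the two basis elements literally equal, the respective coefficients becoming $a(\underline{i}^k-e_r)$ and $a(\underline{i}^k-e_{r+1})$. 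The equality of the two sums is then precisely $a(\underline{i}^k-e_r)=a(\underline{i}^k-e_{r+1})$, which is supplied by Lemma \ref{tercero}: part $(iv)$ handles the generic case $i_r,i_{r+1}\ge 1$, while parts $(ii)$, $(iii)$ and $(i)$ cover the boundary cases $i_{r+1}=0$, $i_r=0$, and $i_r=i_{r+1}=0$ respectively, in which one or both shifts wrap around. Summing over the frozen coordinates yields $(iii)$, and $(iv)$ follows identically with the common trailing entry $g$ again playing no role.

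The routine part is checking that $\sigma_j$ and $\sigma_{j+1}$ differ only in the single merged pair and that the displayed substitutions are bijections of $\{0,\dots,|G|-1\}^k$. The one point demanding care — and the main obstacle — is the modular wrap-around: the substitutions carry some coordinate from $0$ to $|G|-1$, so the clean identity $a(\underline{i}^k-e_r)=a(\underline{i}^k-e_{r+1})$ is literally valid only in the interior, and one must match each boundary value of $(i_r,i_{r+1})$ to the correct one of the four parts of Lemma \ref{tercero} (and, for $(i)$ and $(ii)$, to the correct one of parts $(ii)$ and $(iv)$ of Lemma \ref{primero}). Once this correspondence is laid out, every term cancels and all four sums vanish.
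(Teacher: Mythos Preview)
Your proof is correct and follows essentially the same strategy as the paper's own argument: both match basis elements between the $\sigma_j$- and $\sigma_{j+1}$-sums and then reduce the vanishing to the coefficient identities of Lemma~\ref{primero} (for (i)--(ii)) and the four parts of Lemma~\ref{tercero} (for (iii)--(iv)), split according to whether the relevant coordinates are zero or not. The only difference is cosmetic: you phrase the matching as a bijective re-indexing $i_r\mapsto i_r\pm 1\pmod{|G|}$, while the paper writes out the coefficient of each fixed basis element directly; the resulting four-case analysis and the appeal to the lemmas are identical.
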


\begin{proof}$(i)$
If $n=2$ then 
$\sum _{i=0}^{|G|-1} \, a(i) \, \big( \, \sigma_0^2(i) \, - \, \sigma_1^2(i) \,  \big)$ is equal to 
\[
\big( \, a(0)g \, - \,  a(|G|-1) \, \big) \, [\,1\,] 
+ \displaystyle{\sum_{i=1}^{|G|-1}} \big( \, a(i)g \, - \,  a(i-1) \, \big) \, [\,g^{i}\,] 
\]
which is 0 because of Lemma \ref{primero}. 
Now, let $n>2$. Then  the sum 
$$\displaystyle{\sum_{i_1=0}^{|G|-1}  \, \sum_{i_2=0}^{|G|-1} \, \cdots \, \sum_{i_k=0}^{|G|-1}} 
a({\underline{i}^k}) \, 
\big( \, \sigma_0^n({\underline{i}^k}) \, - \, \sigma_1^n({\underline{i}^k}) \big) $$
is equal to 
\[
\begin{array}{c}
\displaystyle{\sum_{i_2=0}^{|G|-1} \, \cdots \, \sum_{i_k=0}^{|G|-1}} 
\, \big( \, a(0,i_2,\ldots,i_k)g - a(|G|-1,i_2,\dots,i_k) \, \big) [\, 1 \, | \, g  \, | \, g^{i_2} \, | \, \cdots  \, | \, g  \, | \, g^{i_k}\, ] \, + \,  \\
\, \\
\displaystyle{\sum_{i_1=1}^{|G|-1}  \, \sum_{i_2=0}^{|G|-1} \, \cdots \, \sum_{i_k=0}^{|G|-1}} 
\, \big( \, a(i_1,i_2,\ldots,i_k)g - a(i_1-1,i_2,\dots,i_k) \, \big) [\, g^{i_1} \, | \, g  \, | \, \cdots  \, | \, g  \, | \, g^{i_k}\, ]
\end{array}
\]
which is 0 because of Lemma \ref{primero}. 
$(ii)$ Analogue to the proof of $(i)$.
$(iii)$ Let us remark that 
\[
\displaystyle{\sum_{i_j=0}^{|G|-1} \sum_{i_{j+1}=0}^{|G|-1}} \, a({\underline{i}^k}) \,
\big( \, \sigma_j^n(\underline{i}^k) \, - \, \sigma_{j+1}^n({\underline{i}^k}) \, \big)
\]
is equal to
\[
\scalebox{.8}{$
\begin{array}{l}
\big(a(i_1,\ldots,\overbrace{|G|-1}^{j-th},0,\ldots,i_k)
-a(i_1,\ldots,0,\overbrace{|G|-1}^{(j+1)-th},\ldots,i_k)\big)\, 
\sigma_j^n
(i_1,\ldots,\overbrace{0}^{j-th}, \overbrace{0}^{(j+1)-th},\ldots,i_k) \\
+\displaystyle{\sum_{i_j=1}^{|G|-1}}
\big(a(i_1,\ldots,\overbrace{i_j-1}^{j-th},0,\ldots,i_k)
-a(i_1,\ldots,i_j,\overbrace{|G|-1}^{(j+1)-th},\ldots,i_k)\big)\, 
\sigma_j^n
(i_1,\ldots,\overbrace{0}^{j-th}, i_{j+1},\ldots,i_k)   \\
+\displaystyle{\sum_{i_{j+1}=1}^{|G|-1}}
\big(a(i_1,\ldots,\overbrace{|G|-1}^{j-th},i_{j+1},\ldots,i_k)
-a(i_1,\ldots,\overbrace{0}^{j-th},i_{j+1}-1,\ldots,i_k)\big)\, 
\sigma_j^n
(i_1,\ldots, i_{j},\overbrace{0}^{(j+1)-th},\ldots,i_k)   \\
+\displaystyle{\sum_{i_j=1}^{|G|-1} \sum_{i_{j+1}=1}^{|G|-1}} \, 
\big(a(i_1,\ldots,\overbrace{i_j-1}^{j-th},i_{j+1},\ldots,i_k)
-a(i_1,\ldots,\overbrace{i_j}^{j-th},i_{j+1}-1,\ldots,i_k)\big)\, 
\sigma_j^n (\underline{i}^k)
\end{array}
$}
\]
Using $(i)-(iv)$ from lemma \ref{tercero} we obtain that the above sum is 0.
$(iv)$ Analogue to the proof of $(iii)$
\end{proof}

\begin{lemma}Let $n=2k$ where $k >1$. Then 
\[
\displaystyle{\sum_{\underline{i}^k} } \, a({\underline{i}^k})\, 
\sigma_n^n({\underline{i}^k}) =
T 
\displaystyle{\sum_{\underline{i}^{k-1} } } \, 
a(\underline{i}^{k-1}) 
\sigma_{n}^{n}(\underline{i}^k)
\]
where the sum in the leftside is over 
all $k$-tuple $\underline{i}^k=(i_1, \dots, i_k)$ of positive integers that vary from 0 to $|G|-1$ and the 
sum in the rightside is over 
all $(k-1)$-tuple $\underline{i}^{k-1}=(i_1, \dots, i_{k-1})$ of positive integers that vary from 0 to $|G|-1$ 
\end{lemma}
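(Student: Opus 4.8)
The plan is to exploit the fact that the term $\sigma_n^n(\underline{i}^k)$ does not actually depend on the last index $i_k$. By definition $\sigma_n^n(\underline{i}^k) = [\,g\,|\,g^{i_1}\,|\,\cdots\,|\,g^{i_{k-1}}\,|\,g\,]$ is obtained from $g_n^{\underline{i}^k}$ by deleting the final coordinate $g^{i_k}$, so it involves only $i_1,\dots,i_{k-1}$; this is precisely why the right-hand side may legitimately be written with a sum over $(k-1)$-tuples. Consequently, on the left-hand side I would carry out the summation over $i_k$ first, holding $i_1,\dots,i_{k-1}$ fixed, which reduces the whole statement to the single identity
\[
\sum_{i_k=0}^{|G|-1} a(i_1,\dots,i_{k-1},i_k) = T\, a(i_1,\dots,i_{k-1}).
\]

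To establish this reduced identity I would apply the recursive definition of $a$. Since $k>1$, appending the index $i_k$ to the $(k-1)$-tuple gives $a(i_1,\dots,i_{k-1},i_k) = a(i_1,\dots,i_{k-1})\, g^{|G|-(i_k+k)}$, so the inner sum factors as $a(i_1,\dots,i_{k-1})\sum_{i_k=0}^{|G|-1} g^{|G|-i_k-k}$. As $i_k$ runs through $0,\dots,|G|-1$, the exponent $|G|-i_k-k$ runs through a complete set of residues modulo $|G|$, so the powers $g^{|G|-i_k-k}$ range over every element of $G$ exactly once and their sum is $T=1+g+\cdots+g^{|G|-1}$. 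This yields $a(i_1,\dots,i_{k-1})\,T$, and since $gT=T$ forces $T$ to be central, it equals $T\,a(i_1,\dots,i_{k-1})$, as claimed.

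Finally I would substitute this back into the reorganized left-hand sum and sum over the remaining indices $i_1,\dots,i_{k-1}$, pulling the central element $T$ out in front to recover the stated equality. I do not anticipate any serious obstacle: the computation is routine once the correct order of summation is chosen. The only points requiring genuine care are the initial observation that $\sigma_n^n$ is independent of $i_k$ (which is what makes the two sides comparable at all) and the bookkeeping of the exponent modulo $|G|$ that collapses the geometric-type sum to $T$.
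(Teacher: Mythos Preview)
Your argument is correct and is essentially identical to the paper's: both use the recursive formula $a(i_1,\dots,i_{k-1},i_k)=a(i_1,\dots,i_{k-1})\,g^{|G|-(i_k+k)}$ to factor the inner sum over $i_k$, then observe that $\sum_{i_k=0}^{|G|-1} g^{|G|-(i_k+k)}=T$. The only superfluous remark is the appeal to $gT=T$ for centrality---here $A=KG$ is commutative since $G$ is cyclic, so no justification is needed.
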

\begin{proof}
Let us remark that 
$
\displaystyle{\sum_{\underline{i}^k} } \, a({\underline{i}^k})\, \sigma_n({\underline{i}^k}) 
$
is equal to 
$$
\displaystyle{\sum_{\underline{i}^{k-1}}  \sum_{i_k=0}^{|G|-1}} \, a({\underline{i}^{k-1}})\, g^{|G|-(i_k+k)} \, 
[ \, g\, | g^{i_1} \, | \, g \, | \, g^{i_2} \, | \, \cdots  \, | \,   g  \, | \,g^{i_{k-1}}    \, | \, g  \,]
$$
Since $\displaystyle{\sum_{i_k=0}^{|G|-1} } \, g^{q-(i_k+k)} = T$, we obtain what we wanted. 
\end{proof}

\begin{lemma}Let $n=2k$ where $k >1$. Then 
\[
\displaystyle{\sum_{\underline{i}^k} } \, a({\underline{i}^k})\, \big(
\sigma_n^{n+1}({\underline{i}^k}) - \sigma_{n+1}^{n+1}(\underline{i}^k) \big) =
(g-1)
\displaystyle{\sum_{\underline{i}^{k} } } \, 
a(\underline{i}^{k}) \sigma_{n+1}^{n+1}(\underline{i}^k)
\]
where the sum is over 
all $k$-tuple $\underline{i}^k=(i_1, \dots, i_k)$ of positive integers that vary from 0 to $|G|-1$. 
\end{lemma}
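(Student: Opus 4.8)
The plan is to prove the identity by expanding the factor $(g-1)$ on the right-hand side, cancelling the term that the two sides share, and then reducing everything to a single reindexing of the summation over the last coordinate $i_k$.

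First I would expand the right-hand side as
\[
(g-1)\sum_{\underline{i}^{k}} a(\underline{i}^{k})\,\sigma_{n+1}^{n+1}(\underline{i}^{k})
= g\sum_{\underline{i}^{k}} a(\underline{i}^{k})\,\sigma_{n+1}^{n+1}(\underline{i}^{k})
- \sum_{\underline{i}^{k}} a(\underline{i}^{k})\,\sigma_{n+1}^{n+1}(\underline{i}^{k}).
\]
The second summand is exactly the $\sigma_{n+1}^{n+1}$-part of the left-hand side, with matching sign, so it cancels and it remains only to establish
\[
\sum_{\underline{i}^{k}} a(\underline{i}^{k})\,\sigma_{n}^{n+1}(\underline{i}^{k})
= g\sum_{\underline{i}^{k}} a(\underline{i}^{k})\,\sigma_{n+1}^{n+1}(\underline{i}^{k}).
\]

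The key combinatorial observation I would record next is that $\sigma_n^{n+1}$ and $\sigma_{n+1}^{n+1}$ differ only in their last letter. Indeed, $\sigma_n^{n+1}(\underline{i}^{k})$ arises from $g_{n+1}^{\underline{i}^{k}}$ by merging the final two letters $g^{i_k}$ and $g$ into $g^{i_k+1}$, so that, reading the last exponent modulo $|G|$ (using $g^{|G|}=1$),
\[
\sigma_n^{n+1}(i_1,\dots,i_{k-1},i_k)=\sigma_{n+1}^{n+1}(i_1,\dots,i_{k-1},i_k+1).
\]
Substituting this into the left-hand sum and reindexing the innermost summation, replacing $i_k+1$ by a fresh variable that again runs over $\{0,\dots,|G|-1\}$ while the spectator indices $i_1,\dots,i_{k-1}$ are left untouched, turns the left-hand side into $\sum_{\underline{i}^{k}} a(i_1,\dots,i_{k-1},i_k-1)\,\sigma_{n+1}^{n+1}(\underline{i}^{k})$.

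Finally, comparing this with the surviving right-hand term reduces the whole statement to the coefficient identity $a(i_1,\dots,i_{k-1},i_k-1)=g\,a(\underline{i}^{k})$, which is precisely Lemma~\ref{tercero}$(vi)$ (equivalently, it follows at once from the recursive formula $a(\underline{i}^{k})=a(i_1,\dots,i_{k-1})\,g^{|G|-(i_k+k)}$ together with the commutativity of $A$). I expect the only delicate point to be the wrap-around term $i_k=|G|-1$, where $g^{i_k+1}=1$ and the shifted last argument of $a$ must be interpreted modulo $|G|$; this is exactly where the periodicity $g^{|G|}=1$ ensures that the reindexing remains a bijection of $\{0,\dots,|G|-1\}$ and that the coefficient identity persists, so no separate boundary case is genuinely needed.
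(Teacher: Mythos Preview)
Your argument is correct and follows essentially the same route as the paper: both proofs hinge on the observation that $\sigma_n^{n+1}(\underline{i}^k)$ and $\sigma_{n+1}^{n+1}(\underline{i}^k)$ differ only in the last bar-entry, and both reduce the identity to the coefficient relation $a(i_1,\dots,i_{k-1},i_k-1)=g\,a(\underline{i}^k)$ supplied by Lemma~\ref{tercero}. The only cosmetic difference is that the paper groups the left-hand side by the resulting basis element of $A[G^{\times n}]$ and treats the boundary term $i_k=0$ (your wrap-around $i_k=|G|-1$) separately, invoking Lemma~\ref{tercero}(v) there and (vi) for the generic terms, whereas you cancel the shared summand first and then reindex; for full precision you should also cite part~(v) (or your recursive-formula remark) explicitly for the wrap-around coefficient, since part~(vi) alone is stated only for $i_k\ge 1$.
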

\begin{proof}
Let us remark that 
$
\displaystyle{\sum_{\underline{i}^k} } \, a({\underline{i}^k})\, \big(
\sigma_n^{n+1}({\underline{i}^k}) - \sigma_{n+1}^{n+1}(\underline{i}^k) \big) 
$
is equal to 
$$
\begin{array}{c}
\displaystyle{\sum_{i_1=0}^{|G|-1} \cdots   \sum_{i_{k-1}=0}^{|G|-1}} \, 
\big( a(i_1,\ldots,i_{k-1},|G|-1) - a(i_1,\ldots,i_{k-1},0) \big) [\, g \, | \, g^{i_1}\, | \, \cdots \, | \, g \, | \, 1\,] \, + \\
\displaystyle{\sum_{i_1=0}^{|G|-1} \cdots    \sum_{i_{k-1}=0}^{|G|-1} \sum_{i_k=1}^{|G|-1}} \, 
\big( a(i_1,\ldots,i_{k-1},i_k-1) - a(i_1,\ldots,i_{k-1},i_k) \big) [\, g \, | \, g^{i_1}\, | \, \cdots \, | \, g \, | \, g^{i_k}\,] \,  \\
\end{array}
$$
Using Lemma \ref{tercero} $(v)$ $(vi)$, the above sum is equal to 
$$
\begin{array}{c}
\displaystyle{\sum_{i_1=0}^{|G|-1} \cdots    \sum_{i_{k-1}=0}^{|G|-1} \sum_{i_k=0}^{|G|-1}} \, 
(g-1) a(i_1,\ldots,i_{k-1},i_k)  [\, g \, | \, g^{i_1}\, | \, \cdots \, | \, g \, | \, g^{i_k}\,] \,  \\
\end{array}
$$

\end{proof}

\begin{remark} Notice that  
\begin{itemize}
\item[$(i)$] $d_0\,s_0=\epsilon$ 
\item[$(ii)$] $d_1\,s_1=s_0 \, (g-1)$
\end{itemize}
\end{remark}

\begin{proposition} 
The map $s:\bold{M} \rightarrow \bold{B}$ defined in \ref{ese} is a chain map.  

\end{proposition}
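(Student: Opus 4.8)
The plan is to check that every square in diagram~\eqref{diagrama} built from the maps $s_n$ commutes, that is, to verify $d_n \circ s_n = s_{n-1} \circ \partial_n$, where $\partial_n$ denotes the differential of $\bold{M}$. Since each $s_n$ is a morphism of $A$-modules and $A v_n$ is free on $v_n$, it is enough to evaluate both composites on the generator $v_n$. Recalling that $\partial_n(v_n) = T\, v_{n-1}$ for $n$ even and $\partial_n(v_n) = (g-1)\, v_{n-1}$ for $n$ odd, the identities to establish are
\[
d_n(s_n(v_n)) = T\, s_{n-1}(v_{n-1}) \quad (n=2k), \qquad d_{n+1}(s_{n+1}(v_{n+1})) = (g-1)\, s_n(v_n) \quad (n=2k).
\]
The degrees $n=0,1$ are exactly the content of the remark immediately preceding this proposition, so I restrict to $n=2k$ with $k\geq 1$.

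First I would expand the left-hand sides. From the definition of $s_n$ in Map~\ref{ese} and the $A$-linearity of $d_n$, together with the formula $d_n(g_n^{\underline{i}^k}) = \sum_{j=0}^n (-1)^j \sigma_j^n(\underline{i}^k)$ recorded in the earlier remark, one gets
\[
d_n(s_n(v_n)) = \sum_{\underline{i}^k} a(\underline{i}^k)\, d_n(g_n^{\underline{i}^k}) = \sum_{j=0}^n (-1)^j \sum_{\underline{i}^k} a(\underline{i}^k)\, \sigma_j^n(\underline{i}^k),
\]
and likewise $d_{n+1}(s_{n+1}(v_{n+1})) = \sum_{j=0}^{n+1} (-1)^j \sum_{\underline{i}^k} a(\underline{i}^k)\, \sigma_j^{n+1}(\underline{i}^k)$. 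The key idea is then to regroup these alternating sums into consecutive telescoping blocks $(\sigma_j - \sigma_{j+1})$ indexed by the even values of $j$, and to invoke the preceding lemmas to annihilate all but the final block.

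For the even identity, since $(-1)^n = 1$ the grouping reads $(\sigma_0^n - \sigma_1^n) + (\sigma_2^n - \sigma_3^n) + \cdots + (\sigma_{n-2}^n - \sigma_{n-1}^n) + \sigma_n^n$. Part~$(i)$ of the lemma on the $\sigma_j^n$ kills the $j=0$ block and part~$(iii)$ kills every block with $j$ even and $1 < j < n$, leaving only $\sum_{\underline{i}^k} a(\underline{i}^k)\,\sigma_n^n(\underline{i}^k)$, which the subsequent lemma identifies with $T\, s_{n-1}(v_{n-1})$, after noting that $\sigma_n^n(\underline{i}^k)$ depends only on $i_1,\dots,i_{k-1}$ and is precisely the monomial indexing $s_{n-1}(v_{n-1})$. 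For the odd identity, now $(-1)^{n+1} = -1$, so the grouping ends with the block $(\sigma_n^{n+1} - \sigma_{n+1}^{n+1})$; parts~$(ii)$ and~$(iv)$ cancel the earlier blocks, and the last lemma equates $\sum_{\underline{i}^k} a(\underline{i}^k)(\sigma_n^{n+1} - \sigma_{n+1}^{n+1})$ with $(g-1)\sum_{\underline{i}^k} a(\underline{i}^k)\,\sigma_{n+1}^{n+1}(\underline{i}^k) = (g-1)\, s_n(v_n)$, using $\sigma_{n+1}^{n+1}(\underline{i}^k) = g_n^{\underline{i}^k}$. Since the two boundary lemmas are stated for $k>1$, the case $k=1$ (degrees $2$ and $3$) I would dispatch by the same grouping, where the interior blocks are vacuous and the surviving boundary term is evaluated directly from $\sum_{i=0}^{|G|-1} a(i) = T$.

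The main obstacle is organizational rather than computational: every genuine cancellation is already packaged in the lemmas on the $a(\underline{i}^k)$ and the $\sigma$'s, so the delicate points are to pair the summands correctly, to keep track of the sign $(-1)^j$ so that each cancelled block is indexed by an even $j$ and thereby matches the hypotheses of the appropriate lemma, and to recognise the two leftover boundary terms $\sigma_n^n$ and $\sigma_{n+1}^{n+1}$ as the defining sums for $s_{n-1}(v_{n-1})$ and $s_n(v_n)$. Once this bookkeeping is arranged, the two displayed identities hold and $s$ is a chain map.
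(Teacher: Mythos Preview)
Your proposal is correct and follows exactly the approach intended by the paper: the paper's proof is the terse sentence ``$(i)$ $s_{n-1}T=d_ns_n$ and $(ii)$ $s_n(g-1)=d_{n+1}s_{n+1}$ as a consequence of the above lemmas,'' and what you have written is precisely the bookkeeping that unpacks this---grouping the alternating sum of $\sigma_j$'s into even-indexed blocks, invoking parts $(i)$--$(iv)$ of the cancellation lemma, and then identifying the surviving boundary term via the two subsequent lemmas. Your explicit treatment of the case $k=1$ (where the boundary lemmas are not stated) via $\sum_i a(i)=T$ is a welcome addition that the paper leaves implicit.
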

\begin{proof}
Let $n=2k$ be an even integer where $k \geq 1$. 
Consider the maps $s_{n-1}$, $s_{n}$ and $s_{n+1}$ defined as above.  
Then $(i)$ $s_{n-1}T=d_ns_n$ and $(ii)$ $s_n(g-1)=d_{n+1}s_{n+1}$ as 
a consequence of the above lemmas. We conclude that $s$ is a chain map. 
\end{proof}

\begin{corollary}
The induced isomorphisms 
$$s_n:H^n(G,K) \rightarrow K \quad \text{ and }\quad q_n: K \rightarrow H^n(G,K)$$ 
defined in \ref{firstdegree} and \ref{higherdegrees} are inverse. 
\end{corollary}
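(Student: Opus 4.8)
The plan is to deduce the corollary formally from the two preceding propositions, which establish that both $q:\bold{B}\to\bold{M}$ and $s:\bold{M}\to\bold{B}$ are chain maps between projective resolutions of the trivial module $K$, each lifting $\mathrm{id}_K$. Since $\bold{B}$ and $\bold{M}$ are projective resolutions of the same module, the comparison theorem of homological algebra guarantees that any two chain maps lifting a fixed morphism are chain homotopic. I would apply this to the two composites: $s\circ q:\bold{B}\to\bold{B}$ and $q\circ s:\bold{M}\to\bold{M}$ are again chain maps lifting $\mathrm{id}_K$, hence are chain homotopic to $\mathrm{id}_{\bold{B}}$ and $\mathrm{id}_{\bold{M}}$ respectively.

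Next I would apply the contravariant functor $\mathrm{Hom}_A(-,K)$ and pass to cohomology. By construction, $q_n$ and $s_n$ (viewed as maps between $H^n(G,K)$ and $K$) are precisely the maps induced in cohomology by $q$ and $s$; here the factor $K$ arises as the degree-$n$ cohomology of $\mathrm{Hom}_A(\bold{M},K)$, whose differentials all vanish because $g-1$ and $T$ act as $0$ on the trivial module in characteristic $p$ dividing $|G|$, so no further quotient is needed. Because chain-homotopic maps induce the same map on cohomology and the identity chain maps induce the identity, functoriality — with the composition order reversed by contravariance — gives $q_n\circ s_n=\mathrm{id}_{H^n(G,K)}$ from $s\circ q\simeq\mathrm{id}_{\bold{B}}$, and $s_n\circ q_n=\mathrm{id}_K$ from $q\circ s\simeq\mathrm{id}_{\bold{M}}$. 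This shows $s_n$ and $q_n$ are mutually inverse.

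As a concrete cross-check that does not invoke the homotopy (and which I would include if an explicit verification is preferred), one can compute $s_n\circ q_n$ directly from the formulas in \ref{firstdegree} and \ref{higherdegrees}. Evaluating $s_n$ on the cocycle $q_n(\lambda)$ amounts to summing $q_n(\lambda)$ over the arguments $[\,g\,|\,g^{i_1}\,|\,g\,|\,\cdots\,|\,g\,|\,g^{i_k}\,]$ (and, in the odd case, with a trailing $g$). The key observation is that among these, exactly one tuple — namely $i_1=\cdots=i_k=|G|-1$ — satisfies the condition $C(n)$: the relevant membership relations $g\in Q(g^{i_j})$ (for $n$ even) and $g^{i_j}\in Q(g)$ (for $n$ odd) each hold for a single exponent, as recorded in the remarks on $Q$. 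The sum therefore collapses to one term equal to $\lambda$ (using $\epsilon[\,g\,]_g=1$ in the odd case), so $s_n\circ q_n=\mathrm{id}_K$. Since $H^n(G,K)\cong K$ is one-dimensional, a one-sided inverse between one-dimensional spaces is automatically two-sided, and $q_n\circ s_n=\mathrm{id}$ follows as well.

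I expect the only genuine delicacy to be bookkeeping rather than substance: in the abstract argument, correctly tracking the reversal of composition order forced by the contravariant $\mathrm{Hom}_A(-,K)$; and in the direct argument, confirming that precisely one tuple meets $C(n)$, so that the collapsed sum equals $1\cdot\lambda$ and not a multiple of $p$.
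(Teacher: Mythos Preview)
Your proposal is correct. The paper's own proof is the one-line computation $s_nq_n(1)=s_n(\beta^n)=1$, relying implicitly on one-dimensionality to conclude that a one-sided inverse is two-sided; this is exactly your ``concrete cross-check'' paragraph, which you have spelled out in more detail (identifying the unique tuple $i_1=\cdots=i_k=|G|-1$ satisfying $C(n)$). Your first argument, via the comparison theorem for projective resolutions, is a genuinely different and more conceptual route: it avoids any explicit evaluation and would work verbatim for any pair of comparison maps, at the cost of invoking a standard but nontrivial piece of homological algebra. The paper's approach has the advantage of being completely elementary and self-contained, and it doubles as a sanity check on the explicit formulas for $q_n$ and $s_n$; your abstract argument has the advantage of making clear that the result is automatic once $q$ and $s$ are known to be chain maps lifting $\mathrm{id}_K$, with no further computation needed.
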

\begin{proof}
Notice that $s_nq_n(1)=s_n(\beta^n)=1$. 
\end{proof}


\section{The Lie structure on $H^{*+1}(G,K) \otimes A$.}
In this section we will provide a formula that computes 
the Gerstenhaber bracket in $H^{*+1}(G,K) \otimes A$. 
We begin by giving an example of the computation in degree one, 
this is the computation of the commutator bracket. 

\begin{example}
For $i=0,\dots, |G|-1$ the derivation $D_i:A \rightarrow A$ is defined by ${D_i(g)=g^{i+1}}$. 
Let $\beta$ be the derivation on 
$H^1(G,K)$ defined as follows ${\beta(x)=\epsilon[\, x \,]_g}$. 
Recall that $q_1$ and $s_1$ 
denote the explicit isomorphisms between $H^1(G,K)$ 
and the field given in section 2 (map \ref{firstdegree}), 
so $q_1(1_K)=\beta$ and $s_1(\beta)=1_K$. 
It is clear that $H^1(G,K)$ is the vector space generated by $\beta$. 
Moreover, we can check that $$\phi_1(D_i)= \beta \otimes g^i$$ 
where $\phi_1$ is defined in the first section (map \ref{phi}). 
In fact, $\phi_1(D)=\beta \otimes D(g)$ for any derivation $D$ of $A$. 
Let $x,y$ be in $G$. We will compute $[\, \beta \otimes x \, , \, \beta \otimes y \,]$ as follows. 
From the formula of the commutator (proposition \ref{bracket}), we know that 
$$[\, \beta \otimes x \, , \, \beta \otimes y \,]=
(\gamma^{(\beta,y)}_{\beta} -  \gamma^{(\beta,x)}_\beta ) \otimes xy$$  
where $\gamma^{(\beta,y)}_{\beta}$ and $\gamma^{(\beta,x)}_\beta $ 
are elements in $H^1(G,K)$. Therefore both of them are multiples  
of the derivation $\beta$ since $H^1(G,K)$ is generated by $\beta$.   
Then ${(\gamma^{(\beta,y)}_{\beta} -  \gamma^{(\beta,x)}_\beta )}$ is 
$\lambda \beta$ where $\lambda$ is in $K$.  
In fact, $\lambda=s_1(\gamma^{\beta,y}_{\beta}-\gamma^{\beta,x}_{\beta})$. Then 
$$
\begin{array}{lcl}
s_1(\gamma^{\beta,y}_{\beta}-\gamma^{\beta,x}_{\beta})
&=&\gamma^{\beta,y}_{\beta}(g)-\gamma^{\beta,x}_{\beta}(g)\\
\,&=& \beta(gy)\beta(g)-\beta(gx)\beta(g)\\ 
\,&=& \beta(y)-\beta(x)\\ 
\end{array}
$$
since $\beta$ is a derivation and $\beta(g)=\epsilon[g]_g=1$. 
Therefore $$[\, \beta \otimes x \, , \, \beta \otimes y \,] = (\beta(y)-\beta(x)) \:\beta \otimes xy.$$
\end{example}

In the next paragraphs we will compute the Gerstenhaber bracket as we did with the commutator bracket.

\begin{notation} Let $n\geq 1$. We denote $\beta^n$ be the element in $H^n(G,K)$ represented by the map 
$q_n(1):G^{\times n} \rightarrow K$, i.e. given $\underline{x}:=[\, x_1 \, | \, \cdots \, | \, x_n\,]$ in $G^{\times n}$
$$
\beta^{n} (\underline{x})= \begin{cases} 1 & \text{if $n$ is even and \underline{x} satisfies $C(n)$} \\
\beta(x_1) &  \text{if $n$ is odd and \underline{x} satisfies $C(n)$}\\
0 & \text{otherwise} \end{cases}
$$
where $\beta$ is the derivation in $H^1(G,K)$ given by $\beta(x_1)=\epsilon[\, x_1\,]_g$.
\end{notation}

Let $x,y$ be in $G$. 
The purpose of this section is to compute 
$[\, \beta^{n} \otimes x \, , \, \beta^{m} \otimes y\, ]$. 
To do so we need to compute 
$$\gamma=\gamma^{(\beta^{m},y)}_{\beta^{n}}-\gamma^{(\beta^{n}, x)}_{\beta^{m}}$$
Since $\gamma=s(\gamma) \beta^{n+m-1}$, we must compute $s(\gamma)$. 
Observe that it is enough to calculate $s(\gamma^{(\beta^m,y)}_{\beta^n})$ in four cases:
\begin{itemize}
\item[-] $n,m$ are both odd 
\item[-] $n$ is odd and $m$ is even
\item[-] $n$ is even and $m$ is odd 
\item[-] $n,m$ are both even 
\end{itemize}
Recall that $\gamma^{(\beta^m,y)}_{\beta^n}=\sum_{i=1}^{n} (-1)^{(m-1)(i-1)} \, \gamma_i$ where 
$\gamma_i (x_1\, | \, \cdots \, | \, x_{n+m-1})$ equals to
$ 
\alpha(x_1\, | \, \cdots \, | \, x_i x_{i+1}\cdots x_{i+m-1}y \, | \, \cdots \, | \, x_{n+m-1})
\beta(x_i \, | \, \cdots \, | \, x_{i+m-1}).
$ 
In order to compute $s(\gamma^{(\beta^m,y)}_{\beta^n})$, we will compute $s(\gamma_i)$. 
  
\begin{lemma} Let $n$ and $m$ be  odd . Then 
$$s(\gamma^{(\beta^m,y)}_{\beta^n})=\beta(gy)$$
\end{lemma}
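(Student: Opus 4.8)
The plan is to compute $s(\gamma^{(\beta^m,y)}_{\beta^n})$ directly from its definition when both $n$ and $m$ are odd, by evaluating the map $\gamma^{(\beta^m,y)}_{\beta^n}$ on the specific arguments picked out by the formula for $s_{n+m-1}$ in Map \ref{higherdegrees}. Since $n+m-1$ is odd, writing $n+m-1=2\ell+1$, the map $s_{n+m-1}$ sends a cochain $\gamma$ to $\sum_{\underline{i}^\ell} \gamma[\,g\,|\,g^{i_1}\,|\,g\,|\,\cdots\,|\,g^{i_\ell}\,|\,g\,]$. So the first step is to substitute the tuple $\underline{x}=[\,g\,|\,g^{i_1}\,|\,g\,|\,\cdots\,|\,g^{i_\ell}\,|\,g\,]$ into $\gamma^{(\beta^m,y)}_{\beta^n}=\sum_{i=1}^n(-1)^{(m-1)(i-1)}\gamma_i$ and sum over all the $i_r$.

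**Locating the surviving terms.** The key simplification is that each $\gamma_i(\underline{x})$ is a product $\beta^n(\cdots)\,\beta^m(x_i\,|\,\cdots\,|\,x_{i+m-1})$, and by the definition of $\beta^m$ (Definition \ref{cn} and the displayed formula), the factor $\beta^m(x_i\,|\,\cdots\,|\,x_{i+m-1})$ vanishes unless that length-$m$ block satisfies the condition $C(m)$. Because the entries of $\underline{x}$ alternate between a fixed $g$ in odd positions and a variable $g^{i_r}$ in even positions, and because $Q(g)=\{g^{p-1}\}$ while $1\notin Q(x)$ for all $x$ (Lemma \ref{quantumintegers} (i)), the condition $C(m)$ forces almost all of the variable exponents appearing inside the block to be pinned to specific values. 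I would track exactly which pattern of $i_r$ makes both $\beta^m$-factor and $\beta^n$-factor nonzero simultaneously. The expectation, matching the claimed answer, is that the only contribution that survives the double application of the conditions is a single term in which the collapsed entry $x_i x_{i+1}\cdots x_{i+m-1}\,y$ feeds a value $\beta(gy)$ through the odd-degree clause of $\beta^n$, the leading $g$ supplying the $g$ inside $\beta(gy)$.

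**Carrying out the collapse.** Concretely, I would argue that after imposing $C(m)$ on the inner block the summation over the corresponding $i_r$ collapses to a unique choice, so the multiple sum $\sum_{\underline{i}^\ell}$ reduces to a sum over the remaining free indices of a constant; then I would check that the outer factor $\beta^n(x_1\,|\,\cdots\,|\,x_ix_{i+1}\cdots x_{i+m-1}y\,|\,\cdots)$ is governed by its own $C(n)$ condition, which is likewise satisfied for exactly one residual configuration. Since $n$ is odd, the value of this factor is $\beta$ evaluated on its first entry, and with the first entry equal to $g$ (times whatever is absorbed) this produces the value $\beta(g\cdot\text{(product)})$; I would show the product telescopes so that the net argument is $gy$, giving $\beta(gy)$. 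The signs $(-1)^{(m-1)(i-1)}$ are all $+1$ since $m$ is odd, so no cancellation occurs among surviving terms and at most one index $i$ contributes.

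**Main obstacle.** The hard part will be the bookkeeping in the combined condition: showing that $C(m)$ on the inner window and $C(n)$ on the outer (contracted) tuple are simultaneously satisfiable for exactly one pattern of exponents, and that all other patterns kill at least one factor. This requires careful use of the case analysis in Lemma \ref{auxiliar} — in particular parts (i)--(iii) — to understand how contracting the block $x_ix_{i+1}\cdots x_{i+m-1}$ (together with the inserted $y$) interacts with the alternating $Q(g)$-membership conditions, and to confirm that the single-index contribution is independent of $n$ and $m$ beyond their parity, yielding the clean answer $\beta(gy)$. Once that one surviving configuration is isolated, the remaining evaluation is the routine identity $\beta(gy)=\beta(g)+\beta(y)$ via additivity of the derivation $\beta$.
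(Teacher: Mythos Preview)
Your setup and the idea of tracking which exponents are pinned by the conditions $C(m)$ and $C(n)$ are right, and for $\gamma_1$ the collapse you describe does occur and gives $s(\gamma_1)=\beta(gy)$. The gap is in the sentence ``no cancellation occurs among surviving terms and at most one index $i$ contributes.'' In fact \emph{every} index $i=1,\dots,n$ contributes a nonzero value: one computes $s(\gamma_j)=\beta(gy)$ for $j$ odd and $s(\gamma_j)=-\beta(gy)$ for $j$ even. The pre-Lie signs $(-1)^{(m-1)(i-1)}$ are indeed all $+1$, but the alternation of sign comes from the \emph{values} of $s(\gamma_j)$ themselves, not from those coefficients.

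To see why, look at $\gamma_2$: the inner block $[x_2\,|\,\cdots\,|\,x_{m+1}]$ now starts on a variable slot $g^{i_1}$, so $C(m)$ pins $i_2,\dots,i_{h+1}$ to $|G|-1$ but leaves $i_1$ free, with $\beta^m$-value $\beta(g^{i_1})$. The contracted entry lands in the second (even) position of the outer $n$-tuple, and $C(n)$ then forces $g^{i_1}y=g^{|G|-1}$, pinning $i_1$ as well. The resulting contribution is $\beta(g^{|G|-1}y^{-1})$, which by additivity of $\beta$ equals $-\beta(gy)$. The same parity-dependent mechanism repeats for each $j$, giving the alternating pattern. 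Since $n$ is odd, the sum $\sum_{j=1}^n s(\gamma_j)$ telescopes to $\beta(gy)$. So the correct endgame is not ``one term survives'' but ``all $n$ terms survive with alternating values and the odd count of them leaves a single residual $\beta(gy)$.''
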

\begin{proof}
Suppose that $n=2k+1$ and $m=2h+1$, so $n+m-1=2(k+h)+1$. Then, 
$$
\begin{array}{lcl}
s(\gamma_1)&=& \displaystyle{\sum_{i_1=0}^{|G|-1} \cdots \sum_{i_{k+h}=0}^{|G|-1}}
\gamma_1 [\, g \, | g^{i_1} \, | \, g \, | \, \cdots \,|\, g^{i_{k+h}}\, | \, g  \, ] \\
\, &=& \displaystyle{\sum_{i_1=0}^{|G|-1} \cdots \sum_{i_{k+h}=0}^{|G|-1}}
\beta^n[\, g^{i_1+\cdots +i_{h}+h+1}y \, | \, g^{i_{h+1}} \, | \, g \, |  \, \cdots \, | \, g^{i_{k+h}}\, | \, g  \, ] \\
\, &\,&\qquad \qquad \qquad
\beta^m[\, g \, | g^{i_1} \, | \, g \, | \, \cdots \,|\, g^{i_{h}}\, | \, g  \,] \\
\, &\, & \, \\
\, &=& 
\beta^n[\, g^{h(|G|-1)+h+1} y\, | \, g^{|G|-1} \, | \, g \, |  \, \cdots \, | \, g^{|G|-1}\, | \, g  \, ]
\beta^m[\, g \, | g^{|G|-1} \, | \, g \, | \, \cdots \,|\, g^{|G|-1}\, | \, g  \,] \\
\, &=& 
\beta^n[\, gy \, | \, g^{|G|-1} \, | \, g \, |  \, \cdots \, | \, g^{|G|-1}\, | \, g  \, ]
\beta(g) \\
\, &=&  
\beta^n[\, gy \, | \, g^{|G|-1} \, | \, g \, |  \, \cdots \, | \, g^{|G|-1}\, | \, g  \, ] \\
\, &=&
\epsilon[gy]_g=\beta(gy).
\end{array}
$$
We carry out similar computations to get $s(\gamma_j)$ for $j=2,\dots,n$. We obtain that 
$s(\gamma_j)=\beta(gy)$ for $j$ odd and $s(\gamma_j)=-\beta(gy)$ for $j$ even. 
Therefore it is easy to deduce that $s(\gamma^{(\beta^m,y)}_{\beta^n})=\beta(gy)$. 
\end{proof}
\begin{lemma} Let $n$ be odd and $m$ be even. Then 
$$s(\gamma^{(\beta^m,y)}_{\beta^n})=\beta(y)$$
\end{lemma}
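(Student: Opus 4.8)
The plan is to mirror the scheme of the preceding lemma: compute $s(\gamma_i)$ for each $i=1,\dots,n$ and then assemble $s(\gamma^{(\beta^m,y)}_{\beta^n})=\sum_{i=1}^{n}(-1)^{(m-1)(i-1)}s(\gamma_i)$. Write $n=2k+1$ and $m=2h$, so that $n+m-1=2(k+h)$ is \emph{even}; hence $s=s_{n+m-1}$ is given by the even-degree formula of Map \ref{higherdegrees}, namely $s(\alpha)=\sum_{i_1=0}^{|G|-1}\cdots\sum_{i_{k+h}=0}^{|G|-1}\alpha[\,g\mid g^{i_1}\mid g\mid\cdots\mid g\mid g^{i_{k+h}}\,]$. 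Since $m-1$ is odd, the signs $(-1)^{(m-1)(i-1)}=(-1)^{i-1}$ alternate, so it suffices to prove $s(\gamma_i)=\beta(y)$ for \emph{every} $i$; then, because $n$ is odd, $\sum_{i=1}^{n}(-1)^{i-1}\beta(y)=\beta(y)$, which is the claim.

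First I would fix $i$ and substitute the interleaved tuple $[\,g\mid g^{i_1}\mid\cdots\mid g\mid g^{i_{k+h}}\,]$ into $\gamma_i$. The inner factor $\beta^m[\,x_i\mid\cdots\mid x_{i+m-1}\,]$ is a value of $\beta^{2h}$, which is even, so it is nonzero only when the block satisfies $C(2h)$; using that $g^a\in Q(g)$ (equivalently $g\in Q(g^a)$, by Lemma \ref{quantumintegers}$(iii)$) holds iff $a=|G|-1$, this pins the $h$ exponents occurring inside the block to $|G|-1$. On that locus the collapsed product $x_i\cdots x_{i+m-1}$ equals $g^{h|G|}=1$, so the entry inserted at position $i$ of the outer argument is simply $y$, and the inner factor contributes $1$. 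The outer factor is then a value of $\beta^n$, which is odd, hence equals $\beta(\text{first entry})$ when the outer tuple satisfies $C(n)$ and $0$ otherwise.

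The core of the argument is to read off $C(n)$ for this outer tuple, which is again an alternating string of $g$'s and powers $g^{i_j}$ but with $y$ sitting at position $i$. Each condition at an even position not touching $y$ relates a power $g^{i_j}$ (even slot) to a neighbouring $g$ (odd slot), forcing $g^{i_j}\in Q(g)$ and hence $i_j=|G|-1$; the only surviving freedom is the single exponent adjacent to $y$, constrained by one relation of the form $g^{a}\in Q(y)$ (rewriting $y\in Q(g^a)$ via Lemma \ref{quantumintegers}$(iii)$ when $i$ is even). For $i>1$ the first entry of the outer tuple is $g$, so $\beta(\text{first entry})=\beta(g)=\epsilon[g]_g=1$ is independent of the free exponent, and summing $1$ over the admissible exponents gives their number $|Q(y)|$; for $i=1$ the first entry is $y$ itself and there is no free exponent, yielding $\beta(y)$ directly. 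The computation then closes on the identity $|Q(y)|\equiv\beta(y)\pmod p$: for $y=g^{b}$ one has $|Q(y)|=b$ while $\beta(y)=\epsilon[y]_g=b\bmod p$, so $s(\gamma_i)=\beta(y)$ uniformly in $i$.

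I expect the main obstacle to be the positional bookkeeping rather than any deep idea: one must treat separately the two parities of $i$ (the contracted block starting at a $g$ or at a power $g^{i_j}$) and the boundary cases $i=1$ and $i=n$, verifying in each that $C(n)$ leaves exactly one free summation index next to $y$ and that all the others collapse to $|G|-1$. The one genuinely non-routine point is recognising that the residual combinatorial count of admissible exponents is precisely $|Q(y)|$ and that this equals $\beta(y)$ modulo $p$, which is what forces every $s(\gamma_i)$ to equal $\beta(y)$ and lets the alternating sum telescope to the stated value.
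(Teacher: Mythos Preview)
Your proposal is correct and follows essentially the same route as the paper: compute $s(\gamma_i)$ for each $i$ by substituting the interleaved tuple, use the condition $C(2h)$ on the inner $\beta^m$ to collapse the inserted product to $y$, then read off $C(n)$ on the outer $\beta^n$ to isolate a single free summation index adjacent to $y$ and count its admissible values. Your explicit identification of the residual count as $|Q(y)|=b=\beta(y)$ for $y=g^b$ is exactly what the paper uses implicitly when it restricts the surviving sum to $i_{j+h}\in\{|G|-\beta(y),\dots,|G|-1\}$, and your separate treatment of $i=1$ (first entry $y$, no free index) versus $i>1$ (first entry $g$, one free index) matches the paper's split into the displayed even-$j$ case and the asserted ``similar'' odd-$j$ case.
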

\begin{proof}
Suppose that $n=2k+1$ and $m=2h$, so $n+m-1=2(k+h)$. 
Let us compute $s(\gamma_j)$ where $j$ is even. 
$$
\begin{array}{lcl}
s(\gamma_j)&=& \displaystyle{\sum_{i_1=0}^{|G|-1} \cdots \sum_{i_{k+h}=0}^{|G|-1}}
\gamma_j [\, g \, |\, g^{i_1} \, | \, \cdots \, | \, g \, | \, g^{i_j} \,| \, \cdots \, | \, g \, |\, g^{i_{k+h}}\,  ] \\
\, &=& \displaystyle{\sum_{i_1=0}^{|G|-1} \cdots \sum_{i_{k+h}=0}^{|G|-1}}
\beta^n [\, g \, | \, g^{i_1} \, | \, g \, | \, \cdots \, | \,  g^{i_j+\cdots+i_{j+h-1}+h}y \, | \, g^{i_{j+h}} \, | 
\, \cdots \, |  \, g \, |\, g^{i_{k+h}}\, ]
\\
\, & \, &
\qquad \qquad \qquad  \beta^m [\, g^{i_j} \, | \, g \, | \, \cdots \, | \, g^{i_{j+h-1}}\, | \, g\,] \\
\, &=& \displaystyle{\sum_{i_{j+h}=0}^{|G|-1}}
\beta^n [\, g \, | \, g^{|G|-1} \, | \, g \, | \, \cdots \, | \,  g^{h(|G|-1)+h}y \, | \, g^{i_{j+h}} \, | \, \cdots \, |  \, g \, |\, g^{|G|-1}\, ]
\\
\, & \, &
\qquad \qquad \qquad 
\beta^m [\, g^{|G|-1} \, | \, g \, | \, \cdots \, | \, g^{|G|-1}\, | \, g\,] \\
\, &=&\displaystyle{\sum_{i_{j+h}=0}^{|G|-1}}
\beta^n [\, g \, | \, g^{|G|-1} \, | \, g \, | \, \cdots \, | \,  y \, | \, g^{i_{j+h}} \, | \, \cdots \, |  \, g \, |\, g^{|G|-1}\, ] \\ 
\end{array}
$$
which is zero if $y=1$. Suppose that $y \ne 1$ then 
$$
\begin{array}{lcl}
s(\gamma_j) &=&\displaystyle{\sum_{i_{j+h}=|G|-\beta(y)}^{|G|-1}}
\beta^n [\, g \, | \, g^{|G|-1} \, | \, g \, | \, \cdots \, | \,  y \, | \, g^{i_{j+h}} \, | \, \cdots \, |  \, g \, |\, g^{|G|-1}\, ] \\ 
\, &=&\beta(y)
\end{array}
$$
We perform a similar computation for $j$ odd and we obtain that $s(\gamma_j)=\beta(y)$. 
In this case $s(\gamma^{(\beta^m,y)}_{\beta^n})=\beta(y)$.
\end{proof}
\begin{lemma} Let $n$ be even and $m$ be odd. Then 
$$s(\gamma^{(\beta^m,y)}_{\beta^n})=0$$
\end{lemma}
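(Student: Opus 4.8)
The plan is to follow the pattern of the two preceding lemmas: compute $s(\gamma^{(\beta^m,y)}_{\beta^n})$ term by term as $\sum_i s(\gamma_i)$ and exhibit a cancellation coming from the parity of the index $i$. Write $n=2k$ and $m=2h+1$, so that $n+m-1=2(k+h)$ is even and $s$ is given by the even--degree formula of \ref{higherdegrees}, namely $s(\alpha)=\sum_{i_1,\dots,i_{k+h}}\alpha[\,g\,|\,g^{i_1}\,|\,\cdots\,|\,g\,|\,g^{i_{k+h}}\,]$. Since $m$ is odd, $(m-1)(i-1)$ is even for every $i$, so all the signs in $\gamma^{(\beta^m,y)}_{\beta^n}=\sum_{i=1}^{n}(-1)^{(m-1)(i-1)}\gamma_i$ are $+1$, and it suffices to evaluate $s(\gamma_i)$ for each $i=1,\dots,2k$ and add.

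The evaluation of $s(\gamma_i)$ splits according to whether the $\beta^m$--window $[\,x_i\,|\,\cdots\,|\,x_{i+m-1}\,]$ begins on an odd or an even slot of the alternating string $[\,g\,|\,g^{i_1}\,|\,\cdots\,|\,g^{i_{k+h}}\,]$. For $i$ odd the window reads $[\,g\,|\,g^{i_s}\,|\,\cdots\,|\,g^{i_{s+h-1}}\,|\,g\,]$; the condition $C(m)$ of Definition~\ref{cn} forces each even--slot exponent into $Q(g)=\{g^{|G|-1}\}$, pinning those $h$ summation indices to $|G|-1$, so $\beta^m$ evaluates to $\beta(g)=1$ and the collapsed window contributes the element $gy$. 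The remaining factor $\beta^n$, via $C(n)$ together with Lemma~\ref{quantumintegers}, pins every other index to $|G|-1$ except the single free index $i_{s+h}$, subject to $gy\in Q(g^{i_{s+h}})$; hence $s(\gamma_i)=|Q(gy)|$, which equals $\beta(gy)$ in $K$ since $Q(g^a)$ has exactly $a$ elements and $p\mid|G|$. For $i$ even the window reads $[\,g^{i_s}\,|\,g\,|\,\cdots\,|\,g\,|\,g^{i_{s+h}}\,]$; now $C(m)$ pins $i_{s+1},\dots,i_{s+h}$ to $|G|-1$ while leaving $i_s$ free, so $\beta^m=\beta(g^{i_s})$ and the collapse contributes $g^{i_s}y$. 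This time $C(n)$ forces $g\in Q(g^{i_s}y)$, equivalently $g^{i_s}y=g^{|G|-1}$ by Lemma~\ref{quantumintegers}~$(iii)$, which determines $i_s$ uniquely; thus $s(\gamma_i)=\beta(g^{i_s})=-\beta(gy)$, the sign coming from $\beta(g^{i_s})\equiv -(1+\beta(y))\equiv-\beta(gy)\pmod p$.

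Adding up, the index $i$ runs over $k$ odd and $k$ even values in $\{1,\dots,2k\}$, each odd term contributing $+\beta(gy)$ and each even term $-\beta(gy)$, so $s(\gamma^{(\beta^m,y)}_{\beta^n})=k\,\beta(gy)-k\,\beta(gy)=0$. I expect the main obstacle to be the index bookkeeping: keeping precise track of which summation variables are pinned to $|G|-1$ by $C(m)$ and which by $C(n)$, checking this at the two boundary windows $i=1$ and $i=2k$ where part of the alternating string is absent, and verifying that the degenerate case $gy=1$ (i.e. $y=g^{|G|-1}$) still yields matching contributions. Once the parity--dependent values $\pm\beta(gy)$ are established, the cancellation is immediate.
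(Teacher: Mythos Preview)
Your proposal is correct and follows essentially the same approach as the paper: split the computation of $s(\gamma^{(\beta^m,y)}_{\beta^n})$ according to the parity of the insertion index, obtain $s(\gamma_j)=\beta(gy)$ for $j$ odd and $s(\gamma_j)=-\beta(gy)$ for $j$ even, and use that there are $k$ of each to conclude the sum vanishes. The paper carries out the even--$j$ case in detail, arriving at $\beta(g^{|G|-1}y^{-1})=-\beta(gy)$ exactly as you do, and dismisses the odd--$j$ case as a straightforward computation; your treatment of the odd case via the count $|Q(gy)|=\beta(gy)$ is in fact more explicit than the paper's.
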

\begin{proof}
Suppose that $n=2k$ and $m=2h+1$, so $n+m-1=2(k+h)$. 
For $j$ even, let us calculate $s(\gamma_j)$ . 
$$
\begin{array}{lcl}
s(\gamma_j)&=& \displaystyle{\sum_{i_1=0}^{|G|-1} \cdots \sum_{i_{k+h}=0}^{|G|-1}}
\gamma_1 [\, g \, |\, g^{i_1} \, | \, \cdots \, | \, g \, | \, g^{i_j} \,| \, \cdots \, | \, g \, |\, g^{i_{k+h}}\,  ] \\
\, &=& \displaystyle{\sum_{i_1=0}^{|G|-1} \cdots \sum_{i_{k+h}=0}^{|G|-1}}
\beta^n [\, g \, |\, g^{i_1} \, | \, \cdots \, | \, g \, | \, g^{i_j + \cdots i_{j+h} + h }y \, | \, g \, | \, g^{i_{j+h+1}}\, 
| \, \cdots \, | \, g \, |\, g^{i_{k+h}}\,  ]
\\
\, & \, &
\qquad \qquad \qquad  \beta^m 
[\, g^{i_j} \,| \, g \, | \, g^{i_{j+1}} \, | \, \cdots \, | \, g \, |\, g^{i_{j+h}}\,  ]  \\
\, &=& \displaystyle{\sum_{i_j=0}^{|G|-1} }
\beta^n [\, g \, |\, g^{|G|-1} \, | \, \cdots \, | \, g \, | \, g^{i_j + (|G|-1)h+h}  y \, | \, g \, | \, g^{|G|-1}\, 
| \, \cdots \, | \, g \, |\, g^{|G|-1}\,  ]
\\
\, & \, &
\qquad \quad  \beta^m 
[\, g^{i_j} \,| \, g \, | \, g^{|G|-1} \, | \, \cdots \, | \, g \, |\, g^{|G|-1}\,  ] \\
\, &=&\displaystyle{\sum_{i_j=0}^{|G|-1} }
\beta^n [\, g \, |\, g^{|G|-1} \, | \, \cdots \, | \, g \, | \, g^{i_j}  y \, | \, g \, | \, g^{|G|-1}\, 
| \, \cdots \, | \, g \, |\, g^{|G|-1}\,  ] \beta(g^{i_j}) \\
\, &=&\epsilon[g^{|G|-1}y^{-1}]_g=\beta(g^{|G|-1}y^{-1})
\end{array}
$$
We have that 
$0=\beta(1)=\beta(gy(g^{|G|-1}y^{-1}))=\beta(gy)+\beta(g^{|G|-1}y^{-1})$ since $\beta$ is a derivation. 
Therefore $s(\gamma_j)=-\beta(gy)$. 
A straightfoward computation for $s(\gamma_j)$ with $j$ odd gives that $s(\gamma_j)=\beta(gy)$. 
Then $s(\gamma^{(\beta^m,y)}_{\beta^n})=0$. 
\end{proof}
\begin{lemma} Let $n$ and $m$ be even . Then 
$$s(\gamma^{(\beta^m,y)}_{\beta^n})=0$$
\end{lemma}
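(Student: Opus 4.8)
The plan is to follow the pattern of the three preceding lemmas: compute each $s(\gamma_j)$ individually and then reassemble the alternating sum $s(\gamma^{(\beta^m,y)}_{\beta^n}) = \sum_{j=1}^{n} (-1)^{(m-1)(j-1)}\, s(\gamma_j)$. I would write $n=2k$ and $m=2h$, so that $n+m-1 = 2(k+h-1)+1$ is odd; hence $s = s_{n+m-1}$ is given by the odd-degree formula of Map \ref{higherdegrees}, and since $m-1$ is odd the sign $(-1)^{(m-1)(j-1)}$ collapses to $(-1)^{j-1}$.

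First I would evaluate $\gamma_j$ on the summation tuple $[\,g\,|\,g^{i_1}\,|\,g\,|\cdots|\,g^{i_{k+h-1}}\,|\,g\,]$ that defines $s_{n+m-1}$. The factor $\beta^m[\,x_j\,|\cdots|\,x_{j+m-1}\,]$ is an even-degree class evaluated on a length-$m$ consecutive sub-block; invoking $C(m)$ (Definition \ref{cn}) together with the fact that $g^s \in Q(g^t)$ exactly when $s \geq |G|-t$ (so that $Q(g)=\{g^{|G|-1}\}$), the sub-block survives only when every exponent occurring in it equals $|G|-1$, in which case $\beta^m=1$ and the sub-block multiplies out to the identity. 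Thus after the $\circ_i$-collapse the slot in position $j$ carries exactly $y$.

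Next I would feed the collapsed length-$n$ tuple into $\beta^n$. Since $n$ is even, $C(n)$ imposes the relations $z_i \in Q(z_{i+1})$ at the odd positions. Those relations not adjacent to the merged slot force each of the $k-1$ remaining free exponents to equal $|G|-1$ (there are precisely $k-1$ such relations, so they pair off with the free exponents bijectively), while the single relation touching the merged slot reads $y \in Q(g)$ when $j$ is odd and $g \in Q(y)$ when $j$ is even. By the above description of $Q$ both reduce to the same requirement $y=g^{|G|-1}$. Hence $s(\gamma_j)$ is independent of $j$: it equals $1$ if $y=g^{|G|-1}$ and $0$ otherwise.

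Writing $c$ for this common value, I conclude $s(\gamma^{(\beta^m,y)}_{\beta^n}) = c\sum_{j=1}^{n}(-1)^{j-1} = 0$ because $n$ is even. I expect the bookkeeping of the middle step to be the main obstacle: one must verify that, after deleting the odd number $m-1$ of entries (which flips the parity pattern of the tail), the $C(n)$-relations match the free exponents one-to-one and leave exactly one relation on the merged slot. This is precisely the point at which the present even--even case diverges from the odd--even case: there the merged-slot relation involves a \emph{free} exponent and so contributes the factor $\beta(y)$, whereas here it involves the fixed element $g$ and therefore only constrains $y$, yielding a value constant in $j$ and hence a vanishing alternating sum.
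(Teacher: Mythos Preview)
Your proposal is correct and follows essentially the same strategy as the paper's proof: write $n=2k$, $m=2h$, observe the sign becomes $(-1)^{j-1}$, compute each $s(\gamma_j)$ by first using $C(m)$ to force the free exponents inside the sub-block to $|G|-1$ (so the merged slot reduces to $y$), then use $C(n)$ on the collapsed tuple to see that every $s(\gamma_j)$ equals $1$ if $y=g^{|G|-1}$ and $0$ otherwise, and finally conclude via $\sum_{j=1}^{n}(-1)^{j-1}=0$. The paper carries out the explicit computation for $j\neq 1$ odd and then asserts the remaining cases analogously, whereas you give a more uniform parity argument, but the substance is the same.
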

\begin{proof}
Suppose that $n=2k$ and $m=2h$, so $n+m-1=2(k+h-1)+1$. 
We determine $s(\gamma_j)$ 
for $j\ne 1$ odd. 
$$
\begin{array}{lcl}
s(\gamma_j)&=& \displaystyle{\sum_{i_1=0}^{|G|-1} \cdots \sum_{i_{k+h-1}=0}^{|G|-1}}
\gamma_j [\, g \, | g^{i_1} \, | \, \cdots \,| \, g \, | \, g^{i_j} \, | \, \cdots \,| \, g \,  | \, g^{i_{k+h-1}}\,  ] \\
\,&=& \displaystyle{\sum_{i_1=0}^{|G|-1} \cdots \sum_{i_{k+h-1}=0}^{|G|-1}} 
\beta^n
[\,g\, | \, g^{i_1} \, | \, \cdots \, | \, g^{i_{j}+\cdots+i_{j+h-1}+h} y \, | \, g \, | \, \cdots \, | \, g^{i_{k+h-1}} \, | \, g \,  \,] \\
\, & \, &
\qquad \qquad \qquad 
\beta^m[\, g \, | \, g^{i_{j}} \, | \, \cdots \,  | \, g \, | \, g^{i_{j+h-1}} \, ] \\ 
\,&=& 
\beta^n
[\,g\, | \, g^{|G|-1} \, | \, \cdots \, | \, g^{h(|G|-1)+h} y \, | \, g \, | \, \cdots \, | \, g^{|G|-1} \, | \, g \,  \,] 
\beta^m[\, g \, | \, g^{|G|-1} \, | \, \cdots \,  | \, g \, | \, g^{|G|-1} \, ] \\ 
\, &=&
\beta^n
[\,g\, | \, g^{|G|-1} \, | \, \cdots \, | \, y \, | \, g \, | \, \cdots \, | \, g^{|G|-1} \, | \, g \,  \,] 
\end{array}
$$
which is $1$ if $y=g^{|G|-1}=g^{-1}$ and zero otherwise. 
Then we compute $s(\gamma_1)$  and $s(\gamma_j)$ for $j$ even and we obtain that in any case 
$s(\gamma_j)=1$ if $y=g^{-1}$ and zero otherwise.  
Clearly $s(\gamma^{(\beta^m,y)}_{\beta^n})=0$ if $y$ is not $g^{-1}$. 
In case $y=g^{-1}$ then 
$s(\gamma^{(\beta^m,y)}_{\beta^n})=\sum_{i=1}^n (-1)^{(i-1)} 1=0$. 
\end{proof}

\begin{teo}\label{gerstenhaber}
Let $p$ be an odd prime. 
Denote $A=KG$ where the characteristic of $K$ is $p$ 
and $G$ is the cyclic group of order $|G|$.  
Assume that $p$ divides $|G|$. 
Let
$$\varphi: H^*(G,K) \times G \rightarrow K$$ 
be the map which is linear in the first variable and additive in the second variable such that 
$$\varphi(\beta^n,x)=\begin{cases} \beta(x) & \text{ if $n$ is odd} \\ 0 & \text{ otherwise} \end{cases}$$ 
where $\beta^n$ is an element of the basis of $H^*(G,K)$ described above and $x$ is in $G$. 
Then the Gerstenhaber bracket on $H^*(G,K) \otimes A$ is given by 
\[
[\, \beta^n \otimes x \,, \, \beta^m \otimes y ]=  (\varphi(\beta^n,y) - \varphi(\beta^m,x)) \beta^{n+m-1} \otimes xy\] 
where $x$ and $y$ are in $G$.
\end{teo}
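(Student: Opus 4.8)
The plan is to assemble the theorem from the four lemmas immediately preceding it, which already carry out all the nontrivial work. By Proposition~\ref{bracket},
\[
[\, \beta^n \otimes x \,, \, \beta^m \otimes y \,] = \gamma \otimes xy, \qquad \gamma := \gamma^{(\beta^m,y)}_{\beta^n} - (-1)^{(n-1)(m-1)}\,\gamma^{(\beta^n,x)}_{\beta^m}.
\]
Since this bracket is well defined on cohomology, $\gamma$ represents a class in $H^{n+m-1}(G,K)$. By Theorem~\ref{basis} that space is one-dimensional with basis $\beta^{n+m-1}$, and $s_{n+m-1}(\beta^{n+m-1})=1$; hence $\gamma = s_{n+m-1}(\gamma)\,\beta^{n+m-1}$. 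The entire problem therefore reduces to computing the scalar $s_{n+m-1}(\gamma)$, and the theorem is precisely the assertion that this scalar equals $\varphi(\beta^n,y)-\varphi(\beta^m,x)$.

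Because $s_{n+m-1}$ is linear,
\[
s_{n+m-1}(\gamma) = s\big(\gamma^{(\beta^m,y)}_{\beta^n}\big) - (-1)^{(n-1)(m-1)}\, s\big(\gamma^{(\beta^n,x)}_{\beta^m}\big).
\]
The first term is supplied directly by the four lemmas according to the parities of $n$ and $m$. The second term has outer degree $m$ and inner degree $n$, so I would obtain it from the same lemmas simply by interchanging the roles $n \leftrightarrow m$ and $x \leftrightarrow y$; no new computation is required. Thus both summands can be read off, and what remains is purely a matter of collecting signs.

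It then remains to run the four parity cases. When $n,m$ are both odd the sign $(-1)^{(n-1)(m-1)}$ is $+1$ and both summands have the form $\beta(g\,\cdot\,)$; here the one substantive simplification uses that $\beta$ is a derivation, so $\beta(gy)=\beta(g)+\beta(y)=1+\beta(y)$ and likewise for $x$, whence the two constants cancel and we are left with $\beta(y)-\beta(x)$. When exactly one of $n,m$ is odd the sign is again $+1$ and exactly one summand survives (the summand whose outer factor has even degree vanishes by the corresponding lemma), yielding $\beta(y)$ or $-\beta(x)$. When $n,m$ are both even the sign is $-1$ and both summands vanish. In each case the outcome matches $\varphi(\beta^n,y)-\varphi(\beta^m,x)$ by the definition of $\varphi$, which proves the formula. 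The only real obstacle is disciplined bookkeeping of the sign $(-1)^{(n-1)(m-1)}$ together with the role-swap in the second term; once these are in place, the cancellation of constants in the both-odd case via the derivation property of $\beta$ is the single genuine simplification.
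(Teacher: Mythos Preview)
Your proposal is correct and follows essentially the same approach as the paper: invoke Proposition~\ref{bracket}, use Theorem~\ref{basis} to reduce to computing $s(\gamma)$, read off $s(\gamma^{(\beta^m,y)}_{\beta^n})$ from the four parity lemmas, obtain the second summand by swapping roles, and combine. The paper compresses the final step into a table and a one-line ``we deduce''; you spell out the case analysis and in particular make explicit the cancellation $\beta(gy)-\beta(gx)=\beta(y)-\beta(x)$ via the derivation property in the both-odd case, which the paper leaves implicit.
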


\begin{proof}
We will use Proposition \ref{bracket} in order to compute the Gerstenhaber bracket on $H^*(G,K) \otimes A$ . 
Notice that $\gamma^{(\beta^m,y)}_{\beta^n}=s(\gamma^{(\beta^m,y)}_{\beta^n}) \beta^{n+m-1}$ because of the Theorem $\ref{basis}$. 
The following table summarize the computations for $s(\gamma^{(\beta^m,y)}_{\beta^n})$ given by the above lemmas: 
\begin{center}
\begin{tabular}{| c | c | c |}
\hline
$n\setminus m$ & odd & even \\
\hline 
odd & $\beta(gy)$ & $\beta(y)$ \\
\hline 
even & $0$ & $0$ \\
\hline
\end{tabular}
\end{center}
We deduce from this table the above formula for the Gerstenhaber bracket. 
\end{proof}

Let $B$ be an associative algebra with unit and 
$\W:=Der(B)$ the Lie algebra of its derivations. 
We consider two modules over $\W$, the {\it{adjoint}} and the {\it{standard}}. 
The adjoint module is given by the adjoint map, i.e. D.D'=DD'-D'D and the 
standard module is given by the evaluation map, i.e. $D.x=D(x)$.

\begin{corollary}
Consider $HH^n(A)$ as a Lie module over $HH^1(A)$. 
\begin{itemize}
\item[$(i)$] $HH^n(A)$ is the adjoint module, if $n$ is odd. 
\item[$(ii)$] $HH^n(A)$ is the standard Lie module, if $n$ is even. 
\end{itemize}
\end{corollary}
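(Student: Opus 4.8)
The plan is to read the module action straight off the bracket formula of Theorem~\ref{gerstenhaber} and then exhibit, in each parity, an explicit linear isomorphism that intertwines it with the prescribed action. By definition the Lie module action of $HH^1(A)$ on $HH^n(A)$ is the Gerstenhaber bracket $D\cdot\xi=[\,D\,,\,\xi\,]$, which under the isomorphism $HH^*(A)\cong H^*(G,K)\otimes A$ of Proposition~\ref{bracket} is the bracket already computed. Since $H^1(G,K)$ is spanned by $\beta^1$ and $H^n(G,K)$ by $\beta^n$, the spaces $HH^1(A)$ and $HH^n(A)$ carry the bases $\{\beta^1\otimes z\}_{z\in G}$ and $\{\beta^n\otimes x\}_{x\in G}$. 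Specializing Theorem~\ref{gerstenhaber} with a degree-one class in the first slot yields
\[
[\,\beta^1\otimes z\,,\,\beta^n\otimes x\,]=\big(\varphi(\beta^1,x)-\varphi(\beta^n,z)\big)\,\beta^n\otimes zx=\big(\beta(x)-\varphi(\beta^n,z)\big)\,\beta^n\otimes zx,
\]
using $\varphi(\beta^1,x)=\beta(x)$; everything now hinges on the value of $\varphi(\beta^n,z)$, which is $\beta(z)$ for $n$ odd and $0$ for $n$ even.

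First I would treat $n$ odd. Then $\varphi(\beta^n,z)=\beta(z)$, so the action is $[\,\beta^1\otimes z\,,\,\beta^n\otimes x\,]=(\beta(x)-\beta(z))\,\beta^n\otimes zx$, which is formally identical to the self-bracket of $HH^1(A)$ obtained by setting $n=1$. I would then take the linear isomorphism $\Phi\colon HH^1(A)\to HH^n(A)$ defined on bases by $\beta^1\otimes x\mapsto\beta^n\otimes x$ and check, directly from the two displayed formulas, that $\Phi([\,\beta^1\otimes z\,,\,\beta^1\otimes x\,])=[\,\beta^1\otimes z\,,\,\Phi(\beta^1\otimes x)\,]$. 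This identifies $HH^n(A)$ with $HH^1(A)$ acting on itself, i.e. with the adjoint module, proving $(i)$.

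For $n$ even I would use $\varphi(\beta^n,z)=0$, so the action collapses to $[\,\beta^1\otimes z\,,\,\beta^n\otimes x\,]=\beta(x)\,\beta^n\otimes zx$. To match this with the standard module $A$, I recall that the Witt generator $D_i$ corresponds to $\beta^1\otimes g^i$ under $\phi_1$, and I would compute its evaluation action from the derivation property: for $x=g^a$ one has $D_i(g^a)=a\,g^{a+i}=\beta(x)\,g^i x$, since $\beta(g^a)=a$. Comparing, the linear isomorphism $\Psi\colon A\to HH^n(A)$, $x\mapsto\beta^n\otimes x$, satisfies $\Psi(D_i\cdot x)=[\,\beta^1\otimes g^i\,,\,\Psi(x)\,]$, so it is an isomorphism from the standard module onto $HH^n(A)$, proving $(ii)$.

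The only substantive step, and hence the one to handle with care, is the even case: one must correctly translate the abstract evaluation action $D\cdot a=D(a)$ into the $\beta^1\otimes g^i$ basis. The identity $D_i(g^a)=a\,g^{a+i}=\beta(g^a)\,g^i\cdot g^a$ is the crux, since it is precisely what makes the coefficient $\beta(x)$ rather than $\beta(z)$ survive, mirroring the vanishing $\varphi(\beta^n,z)=0$ that distinguishes the even case from the odd one.
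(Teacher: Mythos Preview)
Your argument is correct and follows essentially the same route as the paper: specialize Theorem~\ref{gerstenhaber} to a degree-one class, split on the parity of $n$ via the value of $\varphi(\beta^n,z)$, and then identify $\beta^n\otimes x$ with the derivation $D_j$ (for $x=g^j$) in the odd case and with the element $g^j\in A$ in the even case. The paper carries this out more tersely, writing everything in the $g^i,g^j$ notation from the start, while you phrase it in terms of explicit intertwining isomorphisms $\Phi,\Psi$; the content is the same.
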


\begin{proof}
Let $x=g^i$ with $i=0,\dots,|G|-1$. Notice that $\varphi(\beta^n,g^i)=i$ if $n$ is odd and $0$ otherwise. Then, 
$$
[\,\beta \otimes g^i \,,\, \beta^n \otimes g^j \,]=\begin{cases} (j-i) \beta^n \otimes g^{i+j} & \text{ if $n$ is odd} \\ 
\: j \qquad \beta^{n} \otimes g^{i+j} & \text{ if $n$ is even.} \end{cases}
$$
We identify $\beta \otimes g^i$ to the derivation $D_i$. For $(i)$ we associate $\beta^n \otimes g^j$ to the derivation $D_j$ 
and for $(ii)$ we associate $\beta^n \otimes g^j$ to the element $g^j$. The above computation proves the statement. 
\end{proof}

\begin{corollary}
Consider the Lie algebra $HH^{odd}(A)$. Then the Lie algebra $HH^1(A) \otimes K[t]$ given by the backet 
$
[\, D_i \otimes t^r \, , \, D_j \otimes t^s \,]= [\, D_i \, , \, D_j \,] \otimes t^{r+s}
$
is isomorphic to $HH^{odd}(A)$. 
\end{corollary}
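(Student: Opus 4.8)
The plan is to exhibit an explicit $K$-linear isomorphism $\Phi\colon HH^1(A)\otimes K[t]\to HH^{odd}(A)$ and to verify by a direct computation with Theorem \ref{gerstenhaber} that it intertwines the two brackets. First I would observe that $HH^{odd}(A)=\bigoplus_{r\geq 0}HH^{2r+1}(A)$ is genuinely a Lie subalgebra of $(H^{*+1}(G,K)\otimes A,[-,-])$: the Gerstenhaber bracket maps $HH^n\times HH^m$ into $HH^{n+m-1}$, and $n+m-1$ is odd whenever $n$ and $m$ are both odd, so the odd part is closed under the bracket. By Theorem \ref{basis} each $H^n(G,K)$ is one-dimensional with basis $\beta^n$, hence $HH^{2r+1}(A)\cong H^{2r+1}(G,K)\otimes A$ has $K$-basis $\{\beta^{2r+1}\otimes g^j : j=0,\dots,|G|-1\}$; recall also from the Example that $D_j=\beta^1\otimes g^j$.

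Next I would define $\Phi$ on basis elements by $\Phi(D_j\otimes t^r):=\beta^{2r+1}\otimes g^j$ and extend $K$-linearly. Since both $\{D_j\otimes t^r\}$ and $\{\beta^{2r+1}\otimes g^j\}$ are indexed by the same pairs $(j,r)\in\{0,\dots,|G|-1\}\times\mathbf{Z}_{\geq 0}$, the map $\Phi$ is a bijection between bases, hence a $K$-linear isomorphism. The only bookkeeping point is the degree shift: the factor $t^r$ is sent to cohomological degree $2r+1$, and the product $t^rt^s=t^{r+s}$ lands in degree $2(r+s)+1=(2r+1)+(2s+1)-1$, which is exactly the degree produced by the Gerstenhaber bracket of an element of degree $2r+1$ with one of degree $2s+1$. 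Thus the gradings are compatible under $\Phi$.

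Finally I would check that $\Phi$ respects the brackets on basis elements. Writing $x=g^i$, $y=g^j$ and applying Theorem \ref{gerstenhaber} with $n=2r+1$ and $m=2s+1$ both odd, one has $\varphi(\beta^{2r+1},g^j)=\beta(g^j)$ and $\varphi(\beta^{2s+1},g^i)=\beta(g^i)$, so that $[\,\beta^{2r+1}\otimes g^i,\beta^{2s+1}\otimes g^j\,]=(\beta(g^j)-\beta(g^i))\,\beta^{2(r+s)+1}\otimes g^{i+j}$. Since $\beta(g^i)=i$, this equals $(j-i)\,\beta^{2(r+s)+1}\otimes g^{i+j}$, which is precisely $\Phi\big((j-i)D_{i+j}\otimes t^{r+s}\big)=\Phi\big([D_i,D_j]\otimes t^{r+s}\big)=\Phi\big([D_i\otimes t^r,D_j\otimes t^s]\big)$, using the relation $[D_i,D_j]=(j-i)D_{i+j}$ from the Example (where $g^{i+j}$ and $D_{i+j}$ are read with indices modulo $|G|$, consistently with $\beta$ being additive on $G$). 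As this identity holds on all pairs of basis elements and both brackets are bilinear, $\Phi$ is a Lie algebra isomorphism.

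I expect no serious obstacle here: all the content is already packaged in Theorem \ref{gerstenhaber}. The only care needed is the degree shift $r\mapsto 2r+1$ and the observation that $\varphi(\beta^n,-)=\beta$ on precisely the odd degrees, which is exactly what makes the same Witt bracket $[D_i,D_j]=(j-i)D_{i+j}$ reproduce itself uniformly across every $t$-level and forces the structure constants to be independent of $r,s$.
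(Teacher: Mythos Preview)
Your argument is correct and follows essentially the same approach as the paper: the paper defines the inverse of your map $\Phi$, sending $\beta^{2r+1}\otimes g^i\mapsto D_i\otimes t^r$, and verifies it is a Lie algebra map via the same bracket computation $[\beta^{2r+1}\otimes g^i,\beta^{2s+1}\otimes g^j]=(j-i)\,\beta^{2(r+s)+1}\otimes g^{i+j}$. Your version is simply more explicit about the closure of $HH^{odd}(A)$ under the bracket and the grading compatibility.
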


\begin{proof}
The linear map from $HH^{odd}(A)$ to $HH^1(A) \otimes K[t]$ given by 
$$\beta^{2r+1} \otimes g^i \mapsto D_i \otimes t^r$$
is a map of Lie algebras because 
$$
[\, \beta^{2r+1} \otimes g^i \, , \, \beta^{2s+1} \otimes g^j \,]= (j-i) \beta^{2(r+s)+1} \otimes g^{i+j} 
$$
\end{proof}

\begin{remark}
The Lie algebra $HH^{odd}(A)$ is a Witt-type algebra (defined in \cite{yu})  
by considering: 
\begin{itemize}
\item[(i)] $\Gamma=H^{odd}(G,K) \otimes A$ to be the additive group given by the following multiplication: 
$$
(\beta^{n} \otimes x) \, \boxplus \, (\beta^m \otimes y) \, := \beta^{n+m-1} \otimes xy.
$$
\item[$(ii)$] The map $\varphi: \Gamma \rightarrow K$ given as $\varphi(\beta^n, x)=\beta(x)$. 
\end{itemize}
\end{remark}


\end{document}